\numberwithin{equation}{section}
\newtheorem{thm}{Theorem}[section]
\newtheorem{cor}[thm]{Corollary}
\newtheorem{lem}[thm]{Lemma}
\newtheorem{pro}[thm]{Proposition}
\newtheorem*{thm*}{Theorem}
\newtheorem*{opq*}{Problem}
\theoremstyle{remark}
\newtheorem{rem}[thm]{Remark}
\theoremstyle{definition}
\newtheorem{exa}[thm]{Example}
\newcommand*{\alphab}{\boldsymbol{\alpha}}
\newcommand*{\betab}{\boldsymbol{\beta}}
\newcommand*{\borel}[1]{{\mathfrak B}(#1)}
\newcommand*{\cbb}{\mathbb C}
\newcommand*{\D}{\mathrm{d}}
\newcommand*{\Ge}{\geqslant}
\newcommand*{\gammab}{\boldsymbol{\gamma}}
\newcommand*{\hh}{\mathcal H}
\newcommand*{\is}[2]{\langle#1,#2\rangle}
\newcommand*{\lambdab}{\boldsymbol{\lambda}}
\newcommand*{\Le}{\leqslant}
\newcommand*{\nbb}{\mathbb N}
\newcommand*{\ogr}[1]{\boldsymbol B(#1)}
\newcommand*{\ob}[1]{{\mathscr R}(#1)}
\newcommand*{\rbb}{\mathbb R}
\newcommand*{\supp}[1]{\mathrm{supp}(#1)}
\newcommand*{\wlam}{W_{\lambdab}}
\newcommand*{\zbb}{\mathbb Z}
\begin{document}
   \title[Conditionally positive definite unilateral weighted shifts] {Conditionally positive definite
unilateral weighted shifts}
   \author[Z.\ J.\ Jab{\l}o\'nski]{Zenon Jan
Jab{\l}o\'nski}
   \address{Instytut Matematyki,
Uniwersytet Jagiello\'nski, ul.\ \L ojasiewicza 6,
PL-30348 Kra\-k\'ow, Poland}
\email{Zenon.Jablonski@im.uj.edu.pl}
   \author[I.\ B.\ Jung]{Il Bong Jung}
   \address{Department of Mathematics, Kyungpook National University,
Da\-egu 41566, Korea}
   \email{ibjung@knu.ac.kr}
   \author[E. Y. Lee]{Eun Young Lee}
   \address{Department of Mathematics, Kyungpook National University,
Da\-egu 41566, Korea}
   \email{eunyounglee@knu.ac.kr}
   \author[J.\ Stochel]{Jan Stochel}
\address{Instytut Matematyki, Uniwersytet
Jagiello\'nski, ul.\ \L ojasiewicza 6, PL-30348
Kra\-k\'ow, Poland} \email{Jan.Stochel@im.uj.edu.pl}
   \thanks{The research of the second author was supported
by the National Research Foundation of Korea
(NRF) grant funded by the Korea Government
(MSIT) (2018R1A2B6003660). The third author was
supported by the Basic Science Research Program
through the National Research Foundation of
Korea (NRF) funded by the Ministry of Education
(NRF-2018R1A6A3A01012892).}

   \subjclass[2020]{Primary 47B20, 47B37;
Secondary 43A35}

   \keywords{Weighted shift operator,
conditionally positive definite operator,
backward extension, flatness}

   \maketitle
   \begin{abstract}
In a recent paper \cite{Ja-Ju-St20}, Hilbert
space operators $T$ with the property that each
sequence of the form $\{\|T^n
h\|^2\}_{n=0}^{\infty}$ is conditionally
positive definite in a semigroup sense were
introduced. In the present paper, this line of
research is continued in depth in the case of
unilateral weighted shifts. The conditional
positive definiteness of unilateral weighted
shifts is characterized in terms of formal
moment sequences. The description of the
representing triplet, the main object
canonically associated with such operators, is
provided. The backward extension problem for
conditionally positive definite unilateral
weighted shifts is solved, revealing a new
feature that does not appear in the case of
other operator classes. Finally, the flatness
problem in this context is discussed, with an
emphases on unexpected differences from the
analogous problem for subnormal unilateral
weighted shifts.
   \end{abstract}
   \section{Introduction}
The celebrated Lambert's criterion for
subnormality states that a bounded linear
operator $T$ on a complex Hilbert space $\hh$ is
subnormal if and only if for every $h\in \hh$,
the sequence $\{\|T^n h\|^2\}_{n=0}^{\infty}$ is
positive definite as a function on the additive
semigroup of all nonnegative integers $\zbb_+$
(see \cite{lam}, see also \cite[Theorem~
7]{St-Sz89}). In the harmonic analysis on
$*$-semigroups presented in \cite{B-C-R},
related classes of functions appear that play
important role in various branches of
mathematics and probability theory. Among them,
the class of conditionally positive definite
functions prevails. In a recent paper
\cite{Ja-Ju-St20}, a new class of operators,
called conditionally positive definite (CPD for
brevity), was introduced and studied in depth.
The operator $T$ is said to be CPD if for every
$h\in \hh$, the sequence $\{\|T^n
h\|^2\}_{n=0}^{\infty}$ is conditionally
positive definite as a function on the semigroup
$\zbb_+$. The class of CPD operators contains in
particular subnormal operators \cite{Hal50,Con},
complete hypercontractions of order $2$
\cite{Cha-Sh}, $3$-isometries
\cite{Ag-St1,Ag-St2,Ag-St3} and many others.

In this paper we study CPD unilateral weighted
shifts with positive weights, the issue not
included in \cite{Ja-Ju-St20}. The study is
preceded by necessary preparations related to
selected properties of conditionally positive
definite sequences (see Section~ \ref{Sec.2}).
The main characterization of CPD unilateral
weighted shifts given in Theorem~\ref{cpdws} is
written in the spirit of the classical Berger
theorem on subnormal unilateral weighted shifts
\cite{g-w70,hal70}. Namely, a unilateral
weighted shift $\wlam$ with weights
$\lambdab=\{\lambda_n\}_{n=0}^{\infty}$ is
subnormal if and only if the formal moment
sequence $\hat\lambdab$ associated with $\wlam$
(see \eqref{mur-hupy}) is a Stieltjes moment
sequence. In other words, there is a one-to-one
correspondence between subnormal unilateral
weighted shifts and (non-degenerate) Stieltjes
moment sequences. In turn, CPD unilateral
weighted shifts are in a one-to-one
correspondence with normalized CPD sequences of
exponential growth whose terms are all positive
and whose representing measures are supported in
the closed half-line $[0,\infty)$ (see
Theorems~\ref{cpdws} and \ref{wkwcpdws}).

The essential difference between the above two
characterizations relies on the fact that
Stieltjes moment sequences have all terms
nonnegative, while CPD ones do not. For this
reason, we devoted one section for finding
necessary and sufficient conditions for
positivity of CPD sequences (see
Section~\ref{Sec.5}). It is worth emphasizing
that, unlike Stieltjes moment sequences which
are represented by single parameter (a positive
measure), CPD ones are represented by three
parameters coming from a L\'{e}vy-Khinchin type
formula. Note that the Berger theorem (see
Corollary~\ref{Ber-G-W}) can be derived from
Theorem~\ref{cpdws} by using some scaling
results from~\cite{Ja-Ju-St20}.

Another major difference between subnormal and CPD unilateral
weighted shifts is revealed when solving the backward
extension problem. Our solution given in Theorem~\ref{bireks}
(see also Theorem~\ref{nthbuk}) is the basis for constructing
a CPD unilateral weighted shift with weights $(\lambda_0,
\lambda_1, \ldots)$ for which the extended unilateral
weighted shift with weights $(t,\lambda_0, \lambda_1,
\ldots)$ is CPD for any positive real number $t$ (see
Example~\ref{muritru}). This differs from solving the
subnormal backward extension problem (see
\cite[Proposition~8]{Cur90}).

It was Stampfli who noticed that if two consecutive weights
of a subnormal unilateral weighted shift $\wlam$ are equal,
then all weights of $\wlam$ (except perhaps the first one)
are equal to each other (see \cite[Theorem~6]{Sta66}). As
shown in Theorems~\ref{4weights-1} and \ref{firwts}, this is
no longer true in the case of CPD unilateral weighted shifts.
Namely, the smallest number of consecutive equal weights is
four. If we allow the consecutive weights to be equal to one,
then the number decreases to $2$ (see
Theorems~\ref{2weights-1} and \ref{firetom}). The
corresponding counterexamples confirming the minimality of
these numbers are given in Examples~\ref{przyktwofor} and
\ref{gusv}.

The main object canonically associated with CPD operators is
the (operator) representing triplet. The description of the
representing triplet for CPD unilateral weighted shifts is
provided in Theorem~\ref{truplyt}. The question of when the
components of this triplet are compact operators is discussed
in detail in Propositions~\ref{bnocr}, \ref{Bdziadnc} and
\ref{cupuc}, and Example~\ref{oliun}.

We refer the reader to the classical treatise on
weighted shifts \cite{shi74}.
   \section{\label{Sec.2}Preliminaries} Let $\rbb$ and $\cbb$
stand for the fields of real and complex numbers
respectively and let $\rbb_+=\{x \in \rbb \colon
x \Ge 0\}$. Denote by $\zbb_+$ and $\nbb$ the
sets of nonnegative and positive integers
respectively. We write $\borel{X}$ for the
$\sigma$-algebra of Borel subsets of a
topological space $X$. Unless otherwise stated,
all scalar measures we consider in this paper
are assumed to be positive. Given a finite Borel
measure $\nu$ on $\rbb$, we denote by
$\supp{\nu}$ the closed support of $\nu$. We
write $\delta_t$ for the Borel probability
measure on $\rbb$ concentrated at the point
$t\in \rbb$. We say that a sequence
$\{\gamma_n\}_{n=0}^{\infty}$ of real numbers is
of {\em exponential growth} if $\limsup_{n\to
\infty}|\gamma_n|^{1/n} < \infty$, or
equivalently if and only if there exist
$\alpha,\theta \in \rbb_+$ such that
   \begin{align*}
|\gamma_n| \Le \alpha\, \theta^n, \quad n\in
\zbb_+.
   \end{align*}
The discrete differentiation transformation
$\triangle\colon \rbb^{\zbb_+} \to \rbb^{\zbb_+}$
is given by
   \begin{align*}
(\triangle \gammab)_n = \gamma_{n+1} - \gamma_n,
\quad n\in \zbb_+, \, \gammab =
\{\gamma_n\}_{n=0}^{\infty} \in \rbb^{\zbb_+}.
   \end{align*}
We denote by $\triangle^k$ the $k$th composition
power of $\triangle$. Note that if $\gammab$ is
of exponential growth, so is $\triangle \gammab$.

Given $n\in \zbb_+$, we define the polynomial $Q_n$ in
real variable $x$ by
   \begin{align} \label{klaud}
Q_n(x) =
   \begin{cases}
0 & \text{if } n=0,1,
   \\
\sum_{j=0}^{n-2} (n -j -1) x^j & \text{if } n\Ge 2,
   \end{cases}
\quad x \in \rbb.
   \end{align}
It is immediate from definition that
   \begin{gather} \label{immed-pos}
Q_n(x) \Ge 1, \quad x \in \rbb_+, \, n\Ge 2.
   \end{gather}
Below, $\triangle^j Q_{(\cdot)}(x)$ denotes the
action of the transformation $\triangle^j$ on
the sequence $\{Q_{n}(x)\}_{n=0}^{\infty}$ for
$x\in \rbb$ and $j\in \nbb$. The polynomials
$\{Q_n\}_{n=0}^{\infty}$ have the following
properties that will be used in the subsequent
parts of the paper (see
\cite[Lemma~2.2.1]{Ja-Ju-St20}):
   \allowdisplaybreaks
   \begin{align}  \label{rnx-1}
Q_n(x) & = \frac{x^n-1 - n (x-1)}{(x-1)^2}, \quad n \in
\zbb_+, \, x\in \rbb\setminus \{1\},
   \\ \label{rnx-0}
Q_{n+1}(x) & = x Q_n(x) + n, \quad n \in \zbb_+, \, x\in
\rbb,
   \\ \label{monot-1}
\frac{Q_n(x)}{n} & \Le \frac{Q_{n+1}(x)}{n+1}, \quad n\Ge
1, \, x \in [0,1],
   \\ \label{monot-2}
\lim_{n\to \infty} \frac{Q_n(x)}{n} & =
\frac{1}{1-x}, \quad x\in [0,1),
   \\ \label{del1}
(\triangle Q_{(\cdot)}(x))_n & =
   \begin{cases}
0 & \text{if } n=0, \, x\in \rbb,
   \\
\sum_{j=0}^{n-1} x^j & \text{if } n \in \nbb, \,
x\in \rbb,
   \end{cases}
   \\ \label{del2}
(\triangle^2 Q_{(\cdot)}(x))_n & = x^n, \quad
n\in \zbb_+, \, x\in \rbb.
   \end{align}
We need also the following two additional properties of
polynomials $Q_n$.
   \begin{lem} \label{huhu-zima}
The following two assertions are valid{\em :}
   \begin{gather} \label{fur-1}
\frac{Q_n(x)}{n^2} \Le 1, \quad x \in [0,1], \,
n\Ge 1,
   \\ \label{fur-2}
\lim_{n\to \infty} \int_{[0,1)} \frac{Q_n}{n^2}
\D \nu = 0 \quad \text{if $\nu$ is a finite
Borel measure on $[0,1)$}.
   \end{gather}
   \end{lem}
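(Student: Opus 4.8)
The plan is to prove \eqref{fur-1} by a crude termwise estimate and then to deduce \eqref{fur-2} from the dominated convergence theorem.

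First I would establish \eqref{fur-1}. For $n=1$ there is nothing to prove since $Q_1\equiv 0$. For $n\Ge 2$ and $x\in[0,1]$ one has $x^j\Le 1$ for every $j\in\zbb_+$, so directly from the defining formula \eqref{klaud},
\begin{align*}
Q_n(x)=\sum_{j=0}^{n-2}(n-j-1)x^j\Le \sum_{j=0}^{n-2}(n-j-1)=\sum_{k=1}^{n-1}k=\frac{n(n-1)}{2}\Le n^2,
\end{align*}
which gives $Q_n(x)/n^2\Le 1$. (Equivalently, since the coefficients $n-j-1$ are nonnegative, $Q_n(x)\Le Q_n(1)=\binom n2$ for $x\in[0,1]$, and $\binom n2\Le n^2$.)

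Next I would prove \eqref{fur-2}. Fix a finite Borel measure $\nu$ on $[0,1)$. For each fixed $x\in[0,1)$, writing $\frac{Q_n(x)}{n^2}=\frac1n\cdot\frac{Q_n(x)}{n}$ and using the convergence $\frac{Q_n(x)}{n}\to\frac1{1-x}$ from \eqref{monot-2}, one gets
\begin{align*}
\lim_{n\to\infty}\frac{Q_n(x)}{n^2}=0,\quad x\in[0,1).
\end{align*}
By \eqref{fur-1} the functions $[0,1)\ni x\mapsto Q_n(x)/n^2$ are all dominated by the constant function $1$, which is $\nu$-integrable because $\nu$ is finite. Hence the Lebesgue dominated convergence theorem applies and yields $\int_{[0,1)}\frac{Q_n}{n^2}\,\D\nu\to 0$.

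There is no serious obstacle here; the two points to keep in mind are that the pointwise limit in the second part is available precisely on $[0,1)$ (via \eqref{monot-2}), which is why $\nu$ is assumed to be supported in $[0,1)$ rather than in $[0,1]$, and that the uniform bound \eqref{fur-1} supplies the integrable dominating function needed to interchange the limit and the integral.
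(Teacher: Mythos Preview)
Your proof is correct and follows essentially the same approach as the paper's: the termwise bound $Q_n(x)\le \sum_{k=1}^{n-1}k=\frac{n(n-1)}{2}\le n^2$ for \eqref{fur-1}, and then the pointwise limit from \eqref{monot-2} together with the dominated convergence theorem for \eqref{fur-2}. The only difference is cosmetic detail (you make the $n=1$ case and the factorization $\frac{Q_n}{n^2}=\frac1n\cdot\frac{Q_n}{n}$ explicit).
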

   \begin{proof}
It follows from \eqref{klaud} that
   \begin{align*}
Q_n(x) \Le 1 + 2 + \ldots + (n-1) =
\frac{n(n-1)}{2} \Le n^2, \quad x \in [0,1], \,
n\Ge 2,
   \end{align*}
which implies \eqref{fur-1}. By \eqref{monot-2},
$\lim_{n\to\infty} \frac{Q_n(x)}{n^2} = 0$ for
all $x\in [0,1)$. Combined with \eqref{fur-1}
and Lebesgue's dominated convergence theorem,
this yields \eqref{fur-2}.
   \end{proof}

We now recall some basic facts from harmonic
analysis on semigroups (see \cite{B-C-R}; see
also \cite{Ja-Ju-St20}). Let $\gammab =
\{\gamma_n\}_{n=0}^{\infty}$ be a sequence of
real numbers. We say that $\gammab$ is {\em
positive definite} ({\em PD} for brevity) if
   \begin{align} \label{wicher}
\sum_{i,j=0}^k \gamma_{i+j} \lambda_i \bar\lambda_j \Ge 0,
   \end{align}
for all finite sequences $\lambda_0, \ldots,
\lambda_k \in \cbb$. If the inequality
\eqref{wicher} holds for all finite sequences
$\lambda_0, \ldots, \lambda_k \in \cbb$ such
that $\sum_{j=0}^k \lambda_j=0$, then we call
$\gammab$ {\em conditionally positive definite}
({\em CPD} for brevity). Obviously, PD sequences
are CPD but not conversely. We say that
$\gammab$ is a {\em Stieltjes moment sequence}
if there exists a finite Borel measure $\mu$ on
$\rbb_+$, called a {\em representing measure} of
$\gammab$, such that
   \begin{align*}
\gamma_n = \int_{\rbb_+} x^n \D \mu(x), \quad n
\in \zbb_+.
   \end{align*}
By the Stieltjes theorem (see
\cite[Theorem~6.2.5]{B-C-R}), $\gammab$ is a
Stieltjes moment sequence if and only if the
sequences $\{\gamma_n\}_{n=0}^{\infty}$ and
$\{\gamma_{n+1}\}_{n=0}^{\infty}$ are PD. A
Stieltjes moment sequence $\gammab$ is called
{\em non-degenerate} if $\gamma_n > 0$ for all
$n\in \zbb_+$. The following observation is a
direct consequence of the definition.
   \begin{align} \label{foot-1}
   \begin{minipage}{70ex}
{\em A Stieltjes moment sequence $\gammab$ is
non-degenerate if and only if $\gamma_n
> 0$ for some $n\Ge 1$.}
   \end{minipage}
   \end{align}
We also need the following property of
non-degenerate Stieltjes moment sequences.
   \begin{lem} \label{ajjaj}
If $k\in \zbb_+$ and
$\{\gamma_n\}_{n=0}^{\infty}$ is a
non-degenerate Stieltjes moment sequence with a
representing measure $\mu$, then the following
conditions are~equivalent{\em :}
   \begin{enumerate}
   \item[(i)] $\gamma_{k+1}^2 = \gamma_k
\gamma_{k+2}$,
   \item[(ii)] there exists $\zeta \in (0,\infty)$
such that $\supp{\mu} = \{\zeta\}$ if $k=0$, or
$\supp{\mu} \subseteq \{0, \zeta\}$ if $k\Ge 1$.
   \end{enumerate}
   \end{lem}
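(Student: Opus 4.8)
The plan is to recast condition~(i) as the equality case in the Cauchy--Schwarz inequality for $L^2(\mu)$ and then read off the shape of $\mu$.

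First I would set up the reformulation. Since $\mu$ is supported in $\rbb_+$ and has finite moments, the functions $f(x)=x^{k/2}$ and $g(x)=x^{(k+2)/2}$ lie in $L^2(\mu)$, with $\|f\|^2=\gamma_k$, $\|g\|^2=\gamma_{k+2}$ and $\langle g,f\rangle=\gamma_{k+1}$. Hence Cauchy--Schwarz gives $\gamma_{k+1}^2\Le\gamma_k\gamma_{k+2}$ unconditionally, and condition~(i) says precisely that $f$ and $g$ are linearly dependent in $L^2(\mu)$. Because $g=x\cdot f$ pointwise, I would then show this is equivalent to the existence of $c\in\rbb$ with $x^{k/2}(x-c)=0$ for $\mu$-almost every $x$. (Equivalently, one may start from the identity $\gamma_k\gamma_{k+2}-\gamma_{k+1}^2=\tfrac12\iint(xy)^k(x-y)^2\,\D\mu(x)\,\D\mu(y)$ and analyse where the integrand vanishes; this makes the role of a possible atom of $\mu$ at $0$ transparent.)

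For (i)$\Rightarrow$(ii), fix such a $c$ and split on $k$. If $k=0$, the relation reads $x=c$ for $\mu$-a.e.\ $x$, so $\mu=\mu(\rbb_+)\delta_c$; by non-degeneracy $\gamma_0>0$ and $\gamma_1>0$, forcing $\mu(\rbb_+)>0$ and $c>0$, hence $\supp{\mu}=\{c\}$ and $\zeta:=c$ works. If $k\Ge 1$, the relation says that for $\mu$-a.e.\ $x$ one has $x=0$ or $x=c$, i.e.\ $\mu$ is carried by $\{0,c\}$; were $c\Le 0$, then (since $\supp{\mu}\subseteq\rbb_+$) $\mu$ would be concentrated at $0$, whence $\gamma_n=0$ for all $n\Ge 1$, contradicting non-degeneracy; thus $c>0$, $\supp{\mu}\subseteq\{0,c\}$, and $\zeta:=c$ works. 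The converse (ii)$\Rightarrow$(i) is a direct computation: if $k=0$ and $\supp{\mu}=\{\zeta\}$ then $\mu=a\delta_\zeta$ with $a>0$, $\gamma_n=a\zeta^n$, and $\gamma_1^2=a^2\zeta^2=\gamma_0\gamma_2$; if $k\Ge 1$ and $\supp{\mu}\subseteq\{0,\zeta\}$, write $\mu=a\delta_0+b\delta_\zeta$ with $a,b\Ge 0$, and since $k,k+1,k+2$ are all positive the atom at $0$ contributes nothing, so $\gamma_{k+1}^2=b^2\zeta^{2k+2}=\gamma_k\gamma_{k+2}$.

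The substantive point, and the only place that needs care, is the asymmetry between $k=0$ and $k\Ge 1$: an atom of $\mu$ at $0$ is seen by $\gamma_0$ but not by $\gamma_1,\gamma_2$, so for $k=0$ equality in Cauchy--Schwarz forces $0\notin\supp{\mu}$, whereas for $k\Ge 1$ that atom is invisible to $\gamma_k,\gamma_{k+1},\gamma_{k+2}$ and hence harmless. Tracking which atoms are visible to the three relevant moments, and invoking non-degeneracy at exactly the right place to exclude $\zeta=0$, is the delicate part; everything else is routine.
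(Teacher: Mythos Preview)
Your argument is correct. The paper does not actually prove this lemma: its entire proof reads ``Adapt the proof of \cite[Lemma~3.4]{Ja-Ju-St11}'', so there is nothing to compare against directly. Your approach via the equality case of Cauchy--Schwarz in $L^2(\mu)$ (or, equivalently, the double-integral identity $\gamma_k\gamma_{k+2}-\gamma_{k+1}^2=\tfrac12\iint (xy)^k(x-y)^2\,\D\mu(x)\,\D\mu(y)$) is the standard and most natural one for results of this type, and your treatment of the asymmetry between $k=0$ and $k\Ge 1$ is exactly right. One minor streamlining: once you know $g=cf$ in $L^2(\mu)$ with $f\neq 0$, the constant is forced to be $c=\langle g,f\rangle/\|f\|^2=\gamma_{k+1}/\gamma_k>0$, so the positivity of $c$ comes for free and you need not argue it separately from non-degeneracy; but your route is also valid.
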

   \begin{proof}
Adapt the proof of \cite[Lemma~3.4]{Ja-Ju-St11}.
   \end{proof}
In this paper we deal mainly with CPD sequences
of exponential growth. They can be described as
follows.
   \begin{thm}[{\cite[Theorem~2.2.5]{Ja-Ju-St20}}]
\label{cpd-expon}
   Let $\gammab=\{\gamma_n\}_{n=0}^{\infty}$ be a
sequence of real numbers. Then the following conditions
are equivalent{\em :}
   \begin{enumerate}
   \item[(i)] $\gammab$ is a CPD sequence of exponential growth,
   \item[(ii)] there exist  $b\in \rbb$,
$c\in \rbb_+$ and a compactly supported finite Borel
measure $\nu$ on $\rbb$ such that $\nu(\{1\})=0$ and
   \begin{align} \label{cdr4}
\gamma_n = \gamma_0 + bn + c n^2 + \int_{\rbb} Q_n(x)
\D\nu(x), \quad n\in \zbb_+.
   \end{align}
   \end{enumerate}
Moreover, if {\em (ii)} holds, then the triplet
$(b,c,\nu)$ is unique and
   \allowdisplaybreaks
   \begin{gather*}
\supp{\nu} \subseteq \Big[-\limsup_{n\to
\infty}|\gamma_n|^{1/n}, \limsup_{n\to
\infty}|\gamma_n|^{1/n}\Big].
   \end{gather*}
   \end{thm}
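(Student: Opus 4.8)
The plan is to pivot on the single fact that \emph{$\gammab$ is CPD if and only if the doubly differenced sequence $\triangle^2\gammab$ is PD}, and then to extract the L\'evy--Khinchin type representation \eqref{cdr4} from the classical Hamburger moment problem together with an exponential-growth (determinacy and localization) argument.

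First I would establish that equivalence. Its core is the elementary identity
   \begin{align*}
\sum_{k,l}\gamma_{k+l}(\mu_{k-1}-\mu_k)\overline{(\mu_{l-1}-\mu_l)}
&= \sum_{i,j}(\triangle^2\gammab)_{i+j}\,\mu_i\bar\mu_j,
   \end{align*}
valid for every finitely supported family $\{\mu_k\}_{k\Ge 0}\subseteq\cbb$ (with the convention $\mu_{-1}=0$), which is checked by expanding the left-hand side into four sums and reindexing. The point is that the map $\{\mu_k\}\mapsto\{\mu_{k-1}-\mu_k\}$ is a bijection from finitely supported families onto finitely supported families of vanishing sum (its inverse being $\mu_k=-\sum_{j=0}^k\lambda_j$; equivalently, a finite family with zero sum has generating polynomial divisible by $z-1$). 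Hence $\gammab$ is CPD iff $\sum_{i,j}(\triangle^2\gammab)_{i+j}\mu_i\bar\mu_j\Ge0$ for all finite $\{\mu_k\}$, i.e.\ iff $\triangle^2\gammab$ is PD.

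Granting this, ``(ii)$\Rightarrow$(i)'' is a computation: applying $\triangle^2$ to $\gamma_0+bn+cn^2$ and using \eqref{del2} gives $(\triangle^2\gammab)_n=2c+\int_\rbb x^n\D\nu(x)=\int_\rbb x^n\D(\nu+2c\,\delta_1)(x)$, the moment sequence of a finite positive measure, hence PD, so $\gammab$ is CPD; exponential growth is immediate since $Q_n$ has degree $n-2$ with coefficients at most $n$ by \eqref{klaud}, so $|Q_n|\Le n^2\max(1,R)^n$ on any $[-R,R]\supseteq\supp{\nu}$, while $\gamma_0+bn+cn^2$ grows polynomially. For ``(i)$\Rightarrow$(ii)'': CPD gives $\triangle^2\gammab$ PD, so by Hamburger's theorem $(\triangle^2\gammab)_n=\int_\rbb x^n\D\rho(x)$ for some finite Borel measure $\rho$; exponential growth of $\gammab$ forces $|(\triangle^2\gammab)_n|=O(\vartheta^n)$ for every $\vartheta>\limsup_n|\gamma_n|^{1/n}$, and the even-moment tail estimate $r^{2n}\rho(\{|x|\Ge r\})\Le(\triangle^2\gammab)_{2n}$ (for $r>\vartheta$ and $n$ large) then forces \emph{every} representing measure to be supported in $[-\vartheta,\vartheta]$, yielding simultaneously the uniqueness of $\rho$ (compactly supported measures are determinate, by Weierstrass approximation) and the asserted support bound. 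Finally, writing $f_n:=\int_\rbb Q_n\D\rho$, property \eqref{del2} gives $\triangle^2(\gammab-\{f_n\})=0$, and since $Q_0=Q_1=0$ this forces $\gamma_n=\gamma_0+n(\gamma_1-\gamma_0)+\int_\rbb Q_n\D\rho$; peeling off the atom of $\rho$ at $1$ via $Q_n(1)=\frac{n(n-1)}{2}$ (read from \eqref{klaud} or \eqref{rnx-1}) and setting $\nu:=\rho|_{\rbb\setminus\{1\}}$, $c:=\tfrac12\rho(\{1\})\in\rbb_+$ and $b:=\gamma_1-\gamma_0-c$ produces \eqref{cdr4} with $\nu(\{1\})=0$.

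Uniqueness of the triplet then follows because two representations give two compactly supported representing measures $\nu+2c\,\delta_1$ and $\nu'+2c'\,\delta_1$ of the same sequence $\triangle^2\gammab$, hence these coincide; restricting off $\{1\}$ gives $\nu=\nu'$, comparing point masses at $1$ gives $c=c'$, and evaluating \eqref{cdr4} at $n=1$ (where $Q_1=0$) pins down $b$. I expect the main obstacle to be precisely the determinacy-and-localization step in ``(i)$\Rightarrow$(ii)'': a PD sequence need not correspond to a determinate moment problem in general, so the exponential-growth hypothesis must be genuinely exploited --- through the even-moment tail estimate (or Carleman's condition) --- both to make $\rho$ unique and to confine its support to $[-\limsup_n|\gamma_n|^{1/n},\limsup_n|\gamma_n|^{1/n}]$; the remainder is bookkeeping with the $Q_n$-identities already recorded above.
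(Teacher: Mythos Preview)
The paper does not supply its own proof of this statement; it is quoted verbatim from \cite[Theorem~2.2.5]{Ja-Ju-St20} and used as a black box. Your argument is correct and is in fact the natural route to the result: the equivalence ``$\gammab$ is CPD $\Leftrightarrow$ $\triangle^2\gammab$ is PD'' that you establish via the reindexing identity is precisely the mechanism the paper itself exploits later (the formula $(\triangle^2\gammab)_n = 2c + \int_{\rbb} x^n\,\D\nu(x)$ appears as \eqref{sniug}, and \cite[Proposition~2.2.9]{Ja-Ju-St20}, invoked in the proof of Lemma~\ref{szuf-eru}, is the corresponding Stieltjes-side statement). Your handling of the Hamburger representation, the support localization through the even-moment tail bound $r^{2n}\rho(\{|x|\Ge r\})\Le (\triangle^2\gammab)_{2n}$, the determinacy of compactly supported measures, and the extraction of $(b,c,\nu)$ from $\rho$ via $Q_n(1)=\tfrac{n(n-1)}{2}$ are all sound. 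One cosmetic remark: the tail estimate holds for every $r>0$ and every $n$, not merely ``for $r>\vartheta$ and $n$ large''; what you need is to take $r>\vartheta$ and let $n\to\infty$ to force $\rho(\{|x|\Ge r\})=0$, then intersect over $\vartheta>\limsup_n|\gamma_n|^{1/n}$.
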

Call $(b,c,\nu)$ appearing in
Theorem~\ref{cpd-expon}(ii) the {\em
representing triplet} of $\gammab$. By
\eqref{klaud} and \eqref{cdr4} we see that if
$(b,c,\nu)$ is the representing triplet of
$\gammab$, then
   \begin{align} \label{euju}
b = \gamma_1- \gamma_0 - c.
   \end{align}
For $k\in \zbb_+$, we define the shifted sequence
$\gammab^{(k)} =
\{\gamma^{(k)}_n\}_{n=0}^{\infty}$ by
   \begin{align} \label{bekon}
\gamma^{(k)}_n =\gamma_{k+n}, \quad n\in \zbb_+,
\, k \in \zbb_+.
   \end{align}
Observe that
   \begin{align} \label{fateu}
\triangle^p (\gammab^{(k)}) = (\triangle^p
\gammab)^{(k)}, \quad p, k \in \zbb_+.
   \end{align}
It follows straightforwardly from definition
that if $\gammab$ is CPD and $k$ is an even
positive number, then $\gammab^{(k)}$ is CPD.
This is not true for any positive odd number $k$
(e.g., $\gamma_n=(-1)^n$ for $n\in \zbb_+$). The
description of the representing triplet of
$\gammab^{(k)}$ is provided in
Lemma~\ref{szuf-eru} below. We give a direct
proof of this lemma. Another way of proving it
is to use induction and the recurrence formula
\eqref{rnx-0}; we leave the details to the
interested reader.
   \begin{lem} \label{szuf-eru}
Let $\gammab=\{\gamma_n\}_{n=0}^{\infty}$ be a
CPD sequence of exponential growth and let
$(b,c,\nu)$ be the representing triplet of
$\gammab$. Suppose that $k\in \nbb$. Then
$\gammab^{(k)}$ is of exponential growth and the
following statements hold{\em :}
   \begin{enumerate}
   \item[(i)]  if $k$ is even,
then $\gammab^{(k)}$ is CPD,
   \item[(ii)] if
$k$ is odd, then $\gammab^{(k)}$ is CPD if and
only if $\supp{\nu} \subseteq \rbb_+$,
   \item[(iii)] if $\gammab^{(k)}$ is CPD, then
   \begin{align}  \label{haha1}
b_k & = \gamma_{k+1} - \gamma_{k} -c = b + 2kc +
\sum_{j=0}^{k-1} \int_{\rbb} x^j \D \nu(x),
   \\ \label{haha2}
c_k & = c,
   \\ \label{haha3}
\nu_k(\varDelta) & = \int_{\varDelta} x^k \D
\nu(x), \quad \varDelta \in \borel{\rbb},
   \end{align}
where $(b_k,c_k,\nu_k)$ is the representing
triplet $\gammab^{(k)}$.
   \end{enumerate}
   \end{lem}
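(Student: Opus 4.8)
The plan is to exploit the formula \eqref{cdr4} for $\gammab$ together with the recurrence \eqref{rnx-0} and the difference identities \eqref{del1}, \eqref{del2}, reducing everything to the case $k=1$ and then iterating. First I would settle exponential growth of $\gammab^{(k)}$: this is immediate since $|\gamma^{(k)}_n| = |\gamma_{k+n}|$, so $\limsup_n |\gamma^{(k)}_n|^{1/n} = \limsup_n |\gamma_{k+n}|^{1/n} = \limsup_n |\gamma_n|^{1/n} < \infty$. For (i), that $\gammab^{(k)}$ is CPD when $k$ is even is already recorded in the text just before the statement; I would simply cite \cite[Section~2.2]{Ja-Ju-St20} or reprove it in one line from the definition, writing $\sum_{i,j} \gamma_{k+i+j}\lambda_i\bar\lambda_j = \sum_{i,j}\gamma_{i'+j'}\mu_{i'}\bar\mu_{j'}$ for a suitable zero-sum vector $\boldsymbol\mu$ supported on the shifted index set, using $k=2m$ to absorb the shift into $i\mapsto i+m$.

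The core is a direct computation of the representing triplet for $k=1$. Starting from \eqref{cdr4} and applying \eqref{rnx-0} in the form $Q_{n+1}(x) = x Q_n(x) + n$, I compute
\begin{align*}
\gamma^{(1)}_n = \gamma_{n+1} = \gamma_0 + b(n+1) + c(n+1)^2 + \int_{\rbb} \big(x Q_n(x) + n\big)\D\nu(x).
\end{align*}
Expanding $c(n+1)^2 = cn^2 + 2cn + c$ and collecting the constant term, the coefficient of $n^2$, and the linear-in-$n$ term, the $cn^2$ part stays, giving $c_1 = c$, i.e.\ \eqref{haha2} for $k=1$; the measure $\int x Q_n(x)\D\nu(x)$ identifies $\D\nu_1 = x\,\D\nu$, i.e.\ \eqref{haha3} for $k=1$ (note $\nu_1(\{1\}) = 1\cdot\nu(\{1\}) = 0$, so it is an admissible representing measure, and it is a positive measure precisely when $\supp\nu\subseteq\rbb_+$, which is the link to (ii)); and the term $bn + \int n\,\D\nu + 2cn = \big(b + 2c + \int_\rbb \D\nu\big) n$ gives $b_1 = b + 2c + \int_{\rbb} x^0\,\D\nu(x)$, which is \eqref{haha1} for $k=1$. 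One should also check the $\gamma_0$-normalization is preserved automatically, or equivalently just read off $b_1$ from \eqref{euju} applied to $\gammab^{(1)}$, namely $b_1 = \gamma^{(1)}_1 - \gamma^{(1)}_0 - c_1 = \gamma_2 - \gamma_1 - c$, and reconcile this with the integral formula using \eqref{del2}.

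To pass from $k=1$ to general $k$, I iterate: $\gammab^{(k)} = (\gammab^{(k-1)})^{(1)}$ by \eqref{bekon}, so if $\gammab^{(k-1)}$ is CPD of exponential growth with triplet $(b_{k-1}, c_{k-1}, \nu_{k-1})$, the $k=1$ step yields $c_k = c_{k-1} = \cdots = c$, $\D\nu_k = x\,\D\nu_{k-1} = x^k\,\D\nu$, and $b_k = b_{k-1} + 2c + \int_\rbb \D\nu_{k-1} = b_{k-1} + 2c + \int_\rbb x^{k-1}\D\nu$, which telescopes to \eqref{haha1}. For statement (ii): if $k$ is odd and $\supp\nu\subseteq\rbb_+$ then $x^k\,\D\nu$ is a positive measure, so the iterated formula exhibits $\gammab^{(k)}$ in the form \eqref{cdr4} with a genuine positive representing measure, hence CPD by Theorem~\ref{cpd-expon}; conversely, if $\gammab^{(k)}$ is CPD with $k$ odd, its representing triplet is unique, so $\nu_k(\varDelta) = \int_\varDelta x^k\,\D\nu$ must be a positive measure, forcing $x^k \ge 0$ $\nu$-a.e., i.e.\ $\nu((-\infty,0)) = 0$ since $k$ is odd, which (as $\nu$ is a measure, not a signed measure, and $\{1\}$ is a null set) gives $\supp\nu\subseteq\rbb_+$. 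The main obstacle is bookkeeping: making sure at each stage that the auxiliary measure $x^j\,\D\nu$ is still a \emph{finite positive} Borel measure with no atom at $1$ (so Theorem~\ref{cpd-expon} genuinely applies and uniqueness can be invoked), and keeping the constant/linear/quadratic coefficients from \eqref{rnx-0} sorted correctly so that the telescoped sum matches \eqref{haha1} exactly rather than up to an index shift.
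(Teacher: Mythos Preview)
Your approach is essentially the inductive alternative the paper explicitly mentions (and leaves to the reader) just before the lemma: iterate the recurrence $Q_{n+1}(x)=xQ_n(x)+n$. The paper's own proof instead works directly with second differences, using \eqref{del2} to obtain $(\triangle^2\gammab)_n = 2c + \int_{\rbb} x^n\,\D\nu(x)$, shifting by $k$, and then comparing moment sequences of compactly supported (possibly signed) measures via \cite[Lemma~4.1]{C-J-J-S21}. The $\triangle^2$ route handles all $k$ at once and reads off $c_k$ and $\nu_k$ without any induction; $b_k$ then drops out of \eqref{euju} and \eqref{del1}.

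There is one genuine wrinkle in your iteration. You phrase the inductive step as ``if $\gammab^{(k-1)}$ is CPD with triplet $(b_{k-1},c_{k-1},\nu_{k-1})$'', but for even $k$ the intermediate shift $\gammab^{(k-1)}$ need \emph{not} be CPD (since $k-1$ is odd and $\supp{\nu}$ may meet $(-\infty,0)$), so there is no triplet to invoke. The fix is to observe that your $k=1$ computation is purely algebraic and nowhere uses positivity of the measure: iterate it as an identity $\gamma^{(k)}_n=\gamma_k+b_kn+cn^2+\int_{\rbb} Q_n(x)\,x^k\,\D\nu(x)$ with $x^k\,\D\nu$ a priori signed, and only at the last step compare with the genuine positive representing triplet of $\gammab^{(k)}$. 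That comparison requires uniqueness of moments for compactly supported \emph{signed} measures---exactly what the paper imports from \cite{C-J-J-S21}. The same comparison is what drives the converse in (ii): the uniqueness clause in Theorem~\ref{cpd-expon} is stated for positive $\nu$, so to conclude $x^k\,\D\nu\ge 0$ you must first show that the signed representation agrees with the positive one, not merely cite ``uniqueness of the triplet''. Your closing remark about bookkeeping flags the right issue but resolves it in the wrong direction: $x^j\,\D\nu$ is \emph{not} positive at odd intermediate stages in general, and that is precisely why one should iterate the identity rather than the CPD conclusion.
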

   \begin{proof}
It follows from \eqref{del2} and \eqref{cdr4}
that
   \begin{align} \label{sniug}
(\triangle^2 \gammab)_n = 2 c + \int_{\rbb} x^n
\D\nu(x), \quad n \in \zbb_+.
   \end{align}
This implies that
   \allowdisplaybreaks
   \begin{align} \notag
(\triangle^2 \gammab^{(k)})_n
\overset{\eqref{fateu}}= (\triangle^2
\gammab)_{k+n} & \overset{\eqref{sniug}}= 2 c +
\int_{\rbb} x^n \D\tilde\nu_k(x)
   \\ \label{aru-fr}
& \hspace{1.5ex}= \int_{\rbb} x^n (\tilde\nu_k +
2 c \delta_1)(\D x), \quad n \in \zbb_+,
   \end{align}
where $\tilde\nu_k$ is the signed Borel measure
on $\rbb$ given by
   \begin{align} \label{szes-dwu}
\tilde \nu_k(\varDelta) = \int_{\varDelta} x^k
\D\nu(x), \quad \varDelta \in \borel{\rbb}.
   \end{align}

If $k$ is odd and $\supp{\nu} \subseteq \rbb_+$,
then the measures $\tilde\nu_k$ and $\tilde\nu_k
+ 2 c \delta_1$ are positive. Combined with
\cite[Proposition~2.2.9]{Ja-Ju-St20} and the
fact that $\gammab^{(k)}$ is of exponential
growth, we infer from \eqref{aru-fr} that the
sequence $\gammab^{(k)}$ is CPD, which proves
the ``if'' part of (ii).

Suppose now that $\gammab^{(k)}$ is CPD.
Applying \eqref{sniug} to $\gammab^{(k)}$, we
obtain
   \begin{align} \label{aru-fu}
(\triangle^2 \gammab^{(k)})_n = 2 c_k +
\int_{\rbb} x^n \D\nu_k(x) = \int_{\rbb} x^n
(\nu_k + 2 c_k \delta_1)(\D x), \quad n \in
\zbb_+.
   \end{align}
Since the measures $\tilde\nu_k + 2 c \delta_1$
and $\nu_k + 2 c_k \delta_1$ are compactly
supported (the first of them may not be
positive), we deduce from \eqref{aru-fr},
\eqref{aru-fu} and \cite[Lemma~4.1]{C-J-J-S21}
that
   \begin{align*}
\tilde\nu_k + 2 c \delta_1 = \nu_k + 2 c_k
\delta_1.
   \end{align*}
Since $\tilde\nu_k(\{1\})=\nu_k(\{1\})=0$, this
implies that $c_k=c$ and $\tilde\nu_k=\nu_k$,
which yields \eqref{haha2} and \eqref{haha3}.
Applying \eqref{euju} to $\gammab^{(k)}$ and then
using \eqref{cdr4} and \eqref{del1}, we obtain
\eqref{haha1}. This proves (iii). If moreover $k$
is odd, then using \eqref{szes-dwu} and the fact
that $\tilde\nu_k=\nu_k$ is a positive measure,
we deduce that $\supp{\nu} \subseteq \rbb_+$.
This proves the ``only if'' part of (ii).

As mentioned just before Lemma~\ref{szuf-eru},
the statement (i) holds, and so the proof is
complete.
   \end{proof}
Given a (complex) Hilbert space $\hh$, we write
$\ogr{\hh}$ for the $C^*$-algebra of bounded
linear operators on $\hh$. The range of $T\in
\ogr{\hh}$ is denoted by $\ob{T}$. By a {\em
semispectral measure} on $\rbb_+$ we mean a map
$F\colon \borel{\rbb_+}\to \ogr{\hh}$ defined on
the $\sigma$-algebra $\borel{\rbb_+}$ of Borel
subsets of $\rbb_+$ for which
$\is{F(\cdot)h}{h}$ is a measure for every $h\in
\hh$. The closed support of $F$ is denoted by
$\supp{F}$. Following \cite{Ja-Ju-St20}, we say
that an operator $T\in \ogr{\hh}$ is {\em
conditionally positive definite} ({\em CPD} for
brevity) if the sequence $\{\|T^n
h\|^2\}_{n=0}^{\infty}$ is CPD for every $h\in
\hh$. By Lambert's criterion for subnormality,
subnormal operators are CPD. The CPD operators
can be characterized as follows.
   \begin{thm}[{\cite[Theorem~3.1.1]{Ja-Ju-St20}}] \label{cpdops}
Let $T\in \ogr{\hh}$. Then the following statements are
equivalent{\em :}
   \begin{enumerate}
   \item[(i)] $T$ is CPD,
   \item[(ii)] there exist operators $B,C\in
\ogr{\hh}$ and a compactly supported semispectral measure
$F\colon \borel{\rbb_+} \to \ogr{\hh}$ such that $B=B^*$,
$C\Ge 0$, $F(\{1\})=0$ and
   \begin{align*}
T^{*n}T^n = I + n B + n^2 C + \int_{\rbb_+} Q_n(x) F(\D
x), \quad n\in \zbb_+.
   \end{align*}
   \end{enumerate}
Moreover, if {\em (ii)} holds, then the triplet $(B,C,F)$
is unique and
   \begin{gather*}
\supp{F} \subseteq [0,r(T)^2].
   \end{gather*}
Furthermore, $(\is{Bh}h, \is{Ch}h,
\is{F(\cdot)h}h)$ is the representing triplet of
the CPD sequence $\{\|T^n
h\|^2\}_{n=0}^{\infty}$ for every $h\in \hh$.
   \end{thm}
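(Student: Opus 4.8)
The plan is to obtain the operator representation by applying the scalar description of Theorem~\ref{cpd-expon} to each ``fiber sequence'' $\gammab_h:=\{\|T^nh\|^2\}_{n=0}^{\infty}$, $h\in\hh$, and then to reassemble the resulting scalar data into operators via polarization together with an operator-valued moment representation. The implication (ii)$\Rightarrow$(i) is immediate: if the representation in (ii) holds, then for each $h\in\hh$ the sequence $\gammab_h$ satisfies \eqref{cdr4} with $\gamma_0=\|h\|^2$, $b=\is{Bh}h\in\rbb$, $c=\is{Ch}h\in\rbb_+$ and $\nu=\is{F(\cdot)h}h$, which is a compactly supported finite positive Borel measure with $\nu(\{1\})=\is{F(\{1\})h}h=0$; hence, by Theorem~\ref{cpd-expon}(ii)$\Rightarrow$(i), $\gammab_h$ is a CPD sequence of exponential growth, so $T$ is CPD.

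For (i)$\Rightarrow$(ii) I would proceed as follows. Fix $h\in\hh$. Since $\|T^nh\|^2\Le\|T\|^{2n}\|h\|^2$, the CPD sequence $\gammab_h$ is of exponential growth with $\limsup_n\|T^nh\|^{2/n}\Le r(T)^2$, so by Theorem~\ref{cpd-expon} it has a unique representing triplet $(b(h),c(h),\nu_h)$ with $\supp{\nu_h}\subseteq[-r(T)^2,r(T)^2]$. Because $Th\in\hh$, the shifted sequence $\gammab_h^{(1)}=\{\|T^n(Th)\|^2\}_{n=0}^{\infty}$ is CPD, so Lemma~\ref{szuf-eru}(ii), applied with the odd shift $k=1$, forces $\supp{\nu_h}\subseteq\rbb_+$; thus $\supp{\nu_h}\subseteq[0,r(T)^2]$. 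Now set
\[
D_n:=T^{*(n+2)}T^{n+2}-2\,T^{*(n+1)}T^{n+1}+T^{*n}T^n,\qquad n\in\zbb_+ .
\]
Applying \eqref{sniug} to $\gammab_h$ gives $\is{D_nh}h=(\triangle^2\gammab_h)_n=2c(h)+\int_{\rbb}x^n\D\nu_h(x)=\int_{\rbb_+}x^n\D\mu_h(x)$, where $\mu_h:=\nu_h+2c(h)\delta_1$ is a finite positive Borel measure supported in the fixed compact set $K:=[0,\max\{1,r(T)^2\}]$, with total mass $\mu_h(\rbb_+)=\is{D_0h}h\Le(1+\|T\|^4)\|h\|^2$.

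The operator-valued moment step comes next. For a real polynomial $p=\sum_j a_jx^j$ one has $\is{(\sum_j a_jD_j)h}h=\int_Kp\,\D\mu_h$, so $-\|p\|_{\infty,K}D_0\Le\sum_j a_jD_j\Le\|p\|_{\infty,K}D_0$ with $D_0\Ge0$; by Stone--Weierstrass this extends to a bounded positive linear map $\Phi\colon C(K)\to\ogr{\hh}$ with $\is{\Phi(p)h}h=\int_Kp\,\D\mu_h$ for all $p\in C(K)$, and the operator-valued Riesz representation theorem yields a unique compactly supported semispectral measure $G$ on $K$ with $\is{G(\cdot)h}h=\mu_h$ and $D_n=\int_Kx^n\,G(\D x)$. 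I then peel off the atom at $1$: put $C:=\tfrac12G(\{1\})\Ge0$, $F(\varDelta):=G(\varDelta\setminus\{1\})$ (a compactly supported semispectral measure with $F(\{1\})=0$), and $B:=T^*T-I-C=B^*$. Since $\nu_h(\{1\})=0$ we get $\is{Ch}h=\tfrac12\mu_h(\{1\})=c(h)$, $\is{F(\cdot)h}h=\nu_h$, and \eqref{euju} applied to $\gammab_h$ gives $\is{Bh}h=\|Th\|^2-\|h\|^2-c(h)=b(h)$. Substituting these into \eqref{cdr4} for $\gammab_h$ shows $\is{(T^{*n}T^n)h}h=\is{(I+nB+n^2C+\int_{\rbb_+}Q_n(x)F(\D x))h}h$ for every $h$; as both operators are self-adjoint, polarization gives the required identity. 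Moreover $\is{F(\varDelta)h}h=\nu_h(\varDelta)=0$ whenever $\varDelta\cap[0,r(T)^2]=\varnothing$, so $\supp{F}\subseteq[0,r(T)^2]$. Finally, $(\is{Bh}h,\is{Ch}h,\is{F(\cdot)h}h)=(b(h),c(h),\nu_h)$ is the representing triplet of $\gammab_h$, which is unique by Theorem~\ref{cpd-expon}; this gives the ``furthermore'' statement and, together with polarization, the uniqueness of $(B,C,F)$.

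The hard part will be the operator-valued moment step: checking that $p\mapsto\int_Kp\,\D\mu_h$ is for each $p\in C(K)$ a bounded quadratic form in $h$ (so that $\Phi$ is well defined, linear, bounded and positive), then invoking the operator Riesz representation theorem, and establishing uniqueness of $G$ so that the atom at $1$ can be isolated unambiguously. Everything else is bookkeeping with \eqref{cdr4}, \eqref{sniug}, \eqref{euju}, Lemma~\ref{szuf-eru} and polarization.
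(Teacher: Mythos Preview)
The paper does not prove this theorem; it is quoted verbatim as \cite[Theorem~3.1.1]{Ja-Ju-St20} and used as a black box throughout. Hence there is no ``paper's own proof'' to compare against.

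That said, your sketch is a coherent and essentially correct reconstruction of how such a result is typically obtained. The two genuinely non-trivial points are handled well: you use the CPD property of the shifted sequence $\gammab_h^{(1)}=\{\|T^n(Th)\|^2\}_{n=0}^{\infty}$ together with Lemma~\ref{szuf-eru}(ii) (odd shift $k=1$) to force $\supp{\nu_h}\subseteq\rbb_+$, which is the key observation that makes the measure $\mu_h=\nu_h+2c(h)\delta_1$ positive; and you correctly identify the operator-valued moment step (positivity of $p\mapsto\sum_j a_jD_j$ on polynomials nonnegative on $K$, sup-norm boundedness via $\|\sum_j a_jD_j\|\Le\|p\|_{\infty,K}\|D_0\|$, extension by Stone--Weierstrass, then the operator Riesz/Naimark theorem) as the technical core. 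The decomposition $G=F+2C\delta_1$ and the recovery $B=T^*T-I-C$ via \eqref{euju} are the right bookkeeping moves, and uniqueness of $G$ (hence of $B,C,F$) follows from determinacy of compactly supported moment sequences applied to each $\is{G(\cdot)h}{h}$.

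One small remark: your appeal to Lemma~\ref{szuf-eru} is legitimate within the paper's logical order, since that lemma is proved purely at the scalar level and does not rely on Theorem~\ref{cpdops}. The only place where a reader might want more detail is the passage from the quadratic-form identity $\is{\Phi(f)h}{h}=\int_Kf\,\D\mu_h$ to the measure identity $\is{G(\cdot)h}{h}=\mu_h$; this is standard (Riesz uniqueness on $C(K)$), but since the atom at $1$ is subsequently extracted, it is worth stating explicitly.
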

Following \cite{Ja-Ju-St20}, we call the triplet
$(B,C,F)$ appearing in Theorem~\ref{cpdops}(ii)
the {\em representing triplet} of $T$.
   \section{A characterization of CPD unilateral weighted shifts}
In this paper we consider unilateral weighted shifts with
positive weights (see \cite{shi74} for the fundamental
properties of weighted shifts). As usual $\ell^2$ stands for
the Hilbert space of all square summable complex sequences
$\{\alpha_n\}_{n=0}^{\infty}$. Given a bounded sequence
$\lambdab=\{\lambda_n\}_{n=0}^{\infty}$ of positive real
numbers, there is a unique $\wlam\in \ogr{\ell^2}$ such that
   \begin{align*}
\wlam e_n = \lambda_n e_{n+1}, \quad n \in
\zbb_+,
   \end{align*}
where $\{e_n\}_{n=0}^{\infty}$ stands for the
standard orthonormal basis of $\ell^2$. We call
$\wlam$ a {\em unilateral weighted shift} with
weights $\lambdab$. If $\lambda_n=1$ for all $n
\in \zbb_+$, then we call $\wlam$ the {\em
unilateral shift}. In this paper we consider
only bounded unilateral weighted shifts with
positive real weights.

Suppose that $\wlam$ is a unilateral weighted
shift with weights
$\lambdab=\{\lambda_n\}_{n=0}^{\infty}$. The
sequence
$\hat\lambdab=\{\hat\lambda_n\}_{n=0}^{\infty}$
associated to $\wlam$ is defined by\footnote{It
is sometimes called the moment sequence of
$\wlam$. Obviously, this terminology is not
appropriate in our paper.}
   \begin{align} \label{mur-hupy}
\hat\lambda_n =
   \begin{cases}
1 & \text{if } n=0,
   \\
\lambda_0^2 \cdots \lambda_{n-1}^2 & \text{if }
n \Ge 1,
   \end{cases}
\quad n\in \zbb_+.
   \end{align}
It is easily seen that $\hat\lambdab$ is of exponential
growth and the following identity holds:
   \begin{align} \label{bas-pow}
\hat\lambda_n = \|\wlam^n e_0\|^2, \quad n\in \zbb_+.
   \end{align}
We also have
   \begin{align} \label{Eun-1}
(\widehat{\theta \lambdab})_n = \theta^{2n}
\hat{\lambda}_n, \quad n \in \zbb_+, \, \theta
\in (0,\infty),
   \end{align}
where $\theta \lambdab := \{\theta
\lambda_n\}_{n=0}^{\infty}$. The sequence $\lambdab$
of weights of $\wlam$ can be recaptured immediately
from the sequence $\hat\lambdab$ via the following
formula
   \begin{align} \label{self-map}
\lambda_n = \sqrt{\frac{\hat\lambda_{n+1}}{\hat\lambda_{n}}}, \quad n
\in \zbb_+.
   \end{align}
Note that the transformation $\lambdab \mapsto
\hat \lambdab$ given by \eqref{mur-hupy} is a
one-to-one correspondence between the set of
sequences of positive real numbers and the set
of sequences of positive real numbers with the
first term equal to $1$. The celebrated Berger
theorem states that a unilateral weighted shift
$\wlam$ is subnormal if and only if the
corresponding sequence $\hat\lambdab$ is a
Stieltjes moment sequence (see
\cite{g-w70,hal70}). In other words, the
transformation $\lambdab \mapsto \hat \lambdab$
is a one-to-one correspondence between the set
of weights of subnormal unilateral weighted
shifts and the set of non-degenerate Stieltjes
moment sequences with compactly supported
representing probability measures. We will show
that a similar effect occurs in the case of CPD
unilateral weighted shifts. Namely, the
transformation $\lambdab \mapsto \hat \lambdab$
gives a one-to-one correspondence between the
set of weights of CPD unilateral weighted shifts
and the set of normalized CPD sequences of
exponential growth with positive terms and with
representing measures supported in $\rbb_+$ (see
Theorem~\ref{cpdws}, see also
Theorem~\ref{wkwcpdws}). The main difference
between these two characterizations is that the
terms of a non-degenerate Stieltjes moment
sequence are automatically positive (cf.\
\eqref{foot-1}), while some terms of a CPD
sequence may not be. What is more, it is not
easy to determine CPD sequences with positive
weights. One of our goals in this paper is to
describe CPD sequences of exponential growth
with positive terms.
   \begin{thm} \label{cpdws}
Let $\lambdab=\{\lambda_n\}_{n=0}^{\infty}$ be a bounded sequence of
positive real numbers. Then the following conditions are
equivalent{\em :}
   \begin{enumerate}
   \item[(i)] the operator $\wlam$ is CPD,
   \item[(ii)] there exist $b\in \rbb$, $c\in \rbb_+$
and a compactly supported finite Borel measure $\nu$
on $\rbb_+$ such that $\nu(\{1\})=0$ and
   \begin{align} \label{wnezero}
\hat\lambda_n = 1 + bn + c n^2 + \int_{\rbb_+} Q_n(x) \D\nu(x), \quad
n\in \zbb_+.
   \end{align}
   \end{enumerate}
Moreover, the triplet $(b,c,\nu)$ appearing in
{\em (ii)} is unique and
   \begin{align*}
\supp{\nu} \subseteq \big[0, \limsup_{n\to \infty}
\hat\lambda_n^{1/n}\big].
   \end{align*}
   \end{thm}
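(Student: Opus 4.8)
The plan is to derive Theorem~\ref{cpdws} by combining the operator-theoretic characterization of CPD operators in Theorem~\ref{cpdops} with the scalar characterization of CPD sequences of exponential growth in Theorem~\ref{cpd-expon}, exploiting the special structure of unilateral weighted shifts, in particular the identity $\hat\lambda_n = \|\wlam^n e_0\|^2$ from \eqref{bas-pow}. The key observation is that for a unilateral weighted shift with positive weights the powers $\wlam^{*n}\wlam^n$ are diagonal operators with respect to $\{e_n\}_{n=0}^{\infty}$, and the cyclic vector $e_0$ essentially controls everything: $\|\wlam^n e_k\|^2 = \hat\lambda_{n+k}/\hat\lambda_k$ for all $n,k\in\zbb_+$, i.e. the sequence attached to $e_k$ is (up to the positive normalizing constant $\hat\lambda_k$) the shifted sequence $\hat\lambdab^{(k)}$ in the sense of \eqref{bekon}.

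For the implication (i)$\Rightarrow$(ii), I would argue as follows. If $\wlam$ is CPD, then by definition $\{\|\wlam^n e_0\|^2\}_{n=0}^{\infty} = \hat\lambdab$ is a CPD sequence; since it is of exponential growth (as noted after \eqref{mur-hupy}) and $\hat\lambda_0 = 1$, Theorem~\ref{cpd-expon} gives a unique representing triplet $(b,c,\nu)$ with $c\in\rbb_+$, $\nu$ a compactly supported finite Borel measure on $\rbb$ with $\nu(\{1\})=0$, and \eqref{cdr4} holding with $\gamma_0=1$, i.e.\ \eqref{wnezero} but with $\nu$ a priori only supported in $\rbb$. The remaining point is to upgrade $\supp{\nu}\subseteq\rbb$ to $\supp{\nu}\subseteq\rbb_+$. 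For this I would test the CPD condition on the vector $e_1$: the sequence $\{\|\wlam^n e_1\|^2\}_{n=0}^{\infty}$ equals $\lambda_0^2\,\hat\lambdab^{(1)}$, so $\hat\lambdab^{(1)}$ is CPD, and since $k=1$ is odd, Lemma~\ref{szuf-eru}(ii) forces $\supp{\nu}\subseteq\rbb_+$. Then Theorem~\ref{cpd-expon} also gives the support bound $\supp{\nu}\subseteq[0,\limsup_n \hat\lambda_n^{1/n}]$ and uniqueness. (Alternatively one can read this off directly from the semispectral measure $F$ of Theorem~\ref{cpdops} applied to $h=e_0$, whose support is contained in $[0,r(\wlam)^2]$.)

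For the converse (ii)$\Rightarrow$(i), suppose \eqref{wnezero} holds with $\nu$ supported in $\rbb_+$. I must show $\{\|\wlam^n e_k\|^2\}_{n=0}^{\infty}$ is CPD for every $k$, equivalently that $\hat\lambdab^{(k)}$ is CPD for every $k\in\zbb_+$ (the case $k=0$ is given; scaling by the positive constant $\hat\lambda_k$ is harmless). For $k$ even this is automatic from the remark preceding Lemma~\ref{szuf-eru}. For $k$ odd, Lemma~\ref{szuf-eru}(ii) says $\hat\lambdab^{(k)}$ is CPD precisely because $\supp{\nu}\subseteq\rbb_+$. Finally, an arbitrary $h\in\ell^2$ is a general (infinite) combination $\sum_k \alpha_k e_k$, and since $\wlam^{*n}\wlam^n$ is diagonal, $\|\wlam^n h\|^2 = \sum_k |\alpha_k|^2 \|\wlam^n e_k\|^2$; a nonnegative combination (in fact an infinite sum, which requires a routine limiting argument using the uniform exponential-growth bound coming from boundedness of $\wlam$) of CPD sequences is CPD, so $\{\|\wlam^n h\|^2\}_{n=0}^{\infty}$ is CPD and $\wlam$ is CPD. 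The main obstacle is the passage from the cyclic vector $e_0$ (or the finitely supported vectors $e_k$) to all of $\ell^2$: one must justify that CPD-ness is preserved under the relevant infinite positive combinations, which is where the exponential-growth control and the closedness of the cone of CPD sequences of exponential growth (via the integral representation of Theorem~\ref{cpd-expon} or via \cite[Proposition~2.2.9]{Ja-Ju-St20}) are needed.
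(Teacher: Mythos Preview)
Your proposal is correct and follows essentially the same approach as the paper. For (ii)$\Rightarrow$(i) your argument coincides with the paper's (Lemma~\ref{szuf-eru} for the shifted sequences, then the diagonal expansion \eqref{wnuk} and a routine limit); for (i)$\Rightarrow$(ii) the paper invokes Theorem~\ref{cpdops} with $h=e_0$ directly (exactly the alternative you mention in parentheses), whereas your primary route via Theorem~\ref{cpd-expon} plus Lemma~\ref{szuf-eru}(ii) applied to $e_1$ is a harmless minor variant.
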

We call the triplet $(b,c,\nu)$ appearing in
Theorem~\ref{cpdws}(ii) the {\em scalar
representing triplet} of $\wlam$.
   \begin{proof}[Proof of Theorem~\ref{cpdws}]
We begin by observing that since $\lambdab$ is bounded,
the sequence $\hat\lambdab$ is of exponential growth.

(i)$\Rightarrow$(ii) Apply \eqref{bas-pow} and
Theorem~\ref{cpdops}.

(ii)$\Rightarrow$(i) It follows from
\eqref{bas-pow} that
   \begin{align} \label{wnuk}
\|\wlam^n h\|^2 = \sum_{j=0}^{\infty} |\is{h}{e_j}|^2
\|\wlam^n e_j\|^2 = \sum_{j=0}^{\infty} |\is{h}{e_j}|^2
\frac{\hat\lambda_{n+j}}{\hat\lambda_{j}}, \quad n \in
\zbb_+, \, h \in \ell^2.
   \end{align}
According to Theorem~\ref{cpd-expon} and
Lemma~\ref{szuf-eru}, the sequence
$\{\hat\lambda_{n+j}\}_{n=0}^{\infty}$ is CPD
for all $j\in \zbb_+$. It is a matter of routine
to deduce from \eqref{wnuk} that the sequence
$\{\|\wlam^n h\|^2\}_{n=0}^{\infty}$ is CPD for
all $h\in \ell^2$, which means that $\wlam$ is
CPD.

The ``moreover'' part is a direct consequence of
Theorem~\ref{cpd-expon}.
   \end{proof}
It is of some interest that the famous Berger criterion
\cite{g-w70,hal70} for subnormality of unilateral weighted
shifts can be deduced easily from Theorem~\ref{cpdws} via the
theory of CPD operators.
   \begin{cor}[Berger theorem] \label{Ber-G-W}
Let $\lambdab=\{\lambda_n\}_{n=0}^{\infty}$ be a bounded
sequence of positive real numbers. Then the following
conditions are equivalent{\em :}
   \begin{enumerate}
   \item[(i)] the operator $\wlam$ is subnormal,
   \item[(ii)] there exists a Borel
measure $\mu$ on $\rbb_+$ such that
   \begin{align}  \label{Stieq}
\hat \lambda_n = \int_{\rbb_+} x^n \D \mu(x), \quad n \in
\zbb_+.
   \end{align}
   \end{enumerate}
Moreover, the measure $\mu$ in {\em (ii)} is
unique, compactly supported, and finite.
   \end{cor}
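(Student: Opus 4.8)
The plan is to deduce the Berger criterion from Theorem~\ref{cpdws} together with scaling, using the well-known fact (available from \cite{Ja-Ju-St20}) that an operator $T$ is subnormal if and only if $tT$ is CPD for every $t\in(0,1]$, or equivalently, the scalar characterization that a sequence of exponential growth is a Stieltjes moment sequence if and only if each of its scaled copies $\{t^{2n}\gamma_n\}_{n=0}^{\infty}$ is CPD for all sufficiently small $t>0$ — this being the operator/sequence shadow of the Stieltjes theorem recalled right after \eqref{wicher}. First I would record, via \eqref{Eun-1}, that the formal moment sequence of the scaled shift $W_{t\lambdab}$ is exactly $\{t^{2n}\hat\lambda_n\}_{n=0}^{\infty}$; this turns questions about $tW_{\lambdab}$ into questions about the scaled sequence $t^2\hat\lambdab$ in the sense of exponential dilation.

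For (i)$\Rightarrow$(ii): assume $\wlam$ is subnormal. Then $tW_{\lambdab}$ is subnormal, hence CPD, for every $t\in(0,1]$ (restricting a normal extension). By Theorem~\ref{cpdws} applied to $W_{t\lambdab}$, for each such $t$ the sequence $\{t^{2n}\hat\lambda_n\}_{n=0}^{\infty}$ is a normalized CPD sequence of exponential growth with positive terms whose scalar representing triplet $(b_t,c_t,\nu_t)$ has $\nu_t$ supported in $\rbb_+$; combining the characterization with a L\'evy--Khinchin-to-Hausdorff limiting argument as $t\downarrow 0$ (using \eqref{immed-pos}, \eqref{monot-1} and \eqref{fur-2} to control $\int Q_n/n^2\,\D\nu$ and the $c_t n^2$ term) forces $c=0$ and $b\ge 0$ in the limit, and then \eqref{rnx-1} rewrites \eqref{wnezero} as a genuine integral $\int_{\rbb_+}x^n\,\D\mu(x)$. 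Alternatively — and this is cleaner — I would simply invoke the scaling results of \cite{Ja-Ju-St20} which assert that subnormality of $T$ is equivalent to $tT$ being CPD for all $t\in(0,1]$, together with the sequence-level fact that a CPD sequence of exponential growth all of whose exponential dilations $\{t^{2n}\gamma_n\}$ remain CPD with $\rbb_+$-supported representing measures must itself be a Stieltjes moment sequence; apply this to $\hat\lambdab$ via Theorem~\ref{cpdws}. This yields the finite representing measure $\mu$.

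For (ii)$\Rightarrow$(i): if \eqref{Stieq} holds with a Borel measure $\mu$ on $\rbb_+$, then $\mu$ is automatically finite (take $n=0$, giving $\mu(\rbb_+)=\hat\lambda_0=1$) and, since $\limsup_n\hat\lambda_n^{1/n}<\infty$ by boundedness of $\lambdab$, compactly supported in $[0,r(\wlam)^2]$. Positivity of all $\hat\lambda_n$ gives non-degeneracy via \eqref{foot-1}. By the Stieltjes theorem recalled after \eqref{wicher}, both $\hat\lambdab$ and $\hat\lambdab^{(1)}$ are PD; hence both are CPD, and one reads off from \eqref{cdr4} (with $b=c=0$ and $\nu=\mu$ after the substitution in \eqref{rnx-1}) the scalar representing triplet $(0,0,\mu)$ for $\hat\lambdab$, so Theorem~\ref{cpdws}(ii) holds with this triplet; thus $\wlam$ is CPD. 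To upgrade CPD to subnormal, observe that the same computation applies verbatim to every scaled shift $W_{t\lambdab}$, $t\in(0,1]$, since $\{t^{2n}\hat\lambda_n\}$ has representing measure the pushforward of $\mu$ under $x\mapsto t^2 x$, still supported in $\rbb_+$; so $tW_{\lambdab}$ is CPD for all $t\in(0,1]$, and by the scaling characterization of subnormality from \cite{Ja-Ju-St20}, $\wlam$ is subnormal.

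The ``moreover'' part: uniqueness of $\mu$ is the uniqueness clause of Theorem~\ref{cpdws} (the scalar representing triplet is unique, and here $\mu=\nu$), or directly the determinacy of compactly supported Stieltjes moment sequences; finiteness and compact support were noted above. The main obstacle I anticipate is the ``only if'' direction — specifically, justifying cleanly that subnormality forces the representing triplet to collapse to $(0,0,\mu)$. Doing this by hand requires the limiting argument as $t\downarrow 0$ and care with the signed/limiting behavior of $\nu_t$; the expected cleanest route is instead to cite the scaling equivalences of \cite{Ja-Ju-St20} so that the corollary becomes a short formal consequence of Theorem~\ref{cpdws}.
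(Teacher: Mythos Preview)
Your overall strategy---scale, apply Theorem~\ref{cpdws}, and invoke results from \cite{Ja-Ju-St20} and \cite{Sto91}---is in the same spirit as the paper's, but there is a concrete error that propagates through the argument. You claim that when $\hat\lambda_n=\int_{\rbb_+}x^n\,\D\mu(x)$ the scalar representing triplet of $\hat\lambdab$ is $(0,0,\mu)$. It is not: plugging $b=c=0$ and $\nu=\mu$ into \eqref{cdr4} gives $1+\int Q_n\,\D\mu$, and by \eqref{rnx-1} this equals $1+\int\frac{x^n-1-n(x-1)}{(x-1)^2}\,\D\mu(x)$, not $\int x^n\,\D\mu$. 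Moreover $\nu$ must satisfy $\nu(\{1\})=0$, which $\mu$ need not. The correct triplet (this is the content of \cite[Theorem~2.2.12]{Ja-Ju-St20}, recorded in the Remark following the corollary) is $b=\int(x-1)\,\D\mu$, $c=0$, $\D\nu=(x-1)^2\,\D\mu$. This misidentification also invalidates your uniqueness argument, where you write ``here $\mu=\nu$.''

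For comparison, the paper's route avoids this issue entirely. For (i)$\Rightarrow$(ii), instead of a limiting argument on triplets, the paper picks $\theta$ with $\theta^2\limsup_n\hat\lambda_n^{1/n}<1$, uses that $\theta\wlam$ is subnormal hence CPD so $\widehat{\theta\lambdab}$ is CPD, and then invokes \cite[Theorem~2.2.13]{Ja-Ju-St20} to conclude $\hat\lambdab$ is PD; repeating this for the shifted weights $(\lambda_1,\lambda_2,\ldots)$ shows $\{\hat\lambda_{n+1}\}$ is PD, and the Stieltjes theorem finishes. For (ii)$\Rightarrow$(i), the paper fixes $\theta$ with $\|\theta\wlam\|<1$, uses \cite[Theorem~2.2.12]{Ja-Ju-St20} to obtain the \emph{correct} representing triplet of the Stieltjes moment sequence $\widehat{\theta\lambdab}$ (with $c=0$ and $\supp\nu\subseteq\rbb_+$), applies Theorem~\ref{cpdws} to get $\theta\wlam$ CPD, and then \cite[Theorem~4.1]{Sto91} (CPD contraction $\Rightarrow$ subnormal) rather than a general ``scaling characterization of subnormality.'' The ``moreover'' part follows from exponential growth of $\hat\lambdab$ and determinacy, not from identifying $\mu$ with $\nu$.
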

   \begin{proof}
(i)$\Rightarrow$(ii) Fix $\theta \in (0,\infty)$ such
that
   \begin{align} \label{Eun-2}
\theta^2 \limsup_{n\to \infty} (\hat
\lambda_n)^{1/n} < 1.
   \end{align}
Since $W_{t \lambdab} = t \wlam$ is subnormal
and consequently CPD for all $t\in (0,\infty)$,
we infer from \eqref{bas-pow} applied to
$W_{\theta \lambdab}$ that $\widehat{\theta
\lambdab}$ is CPD. By \eqref{Eun-1},
\eqref{Eun-2} and
\cite[Theorem~2.2.13]{Ja-Ju-St20},
$\hat\lambdab$ is PD. Applying the above to
$W_{\alphab}$ with $\alphab =
\{\lambda_{n+1}\}_{n=0}^{\infty}$ ($W_{\alphab}$
is unitarily equivalent to
$\wlam|_{\overline{\ob{\wlam}}}$) and noting
that $\hat\lambda_{n+1} = \lambda_0^2 \,
\hat\alpha_n$ for all $n \in \zbb_+$, we see
that $\{\hat\lambda_{n+1}\}_{n=0}^{\infty}$ is
PD. Using \cite[Theorem~6.2.5]{B-C-R} yields
(ii).

(ii)$\Rightarrow$(i) Fix $\theta \in (0,\infty)$
such that $\|\theta\wlam\| < 1$. By assumption
and \eqref{Eun-1}, the sequence $\widehat{\theta
\lambdab}$ is a Stieltjes moment sequence and by
\cite[Theorem~2.2.12]{Ja-Ju-St20} it is CPD with
the representing triplet
$(b_{\theta},0,\nu_{\theta})$ such that
$\supp{\nu_{\theta}} \subseteq \rbb_+$. Using
Theorem~\ref{cpdws}, we deduce that $W_{\theta
\lambdab}$ is a CPD operator such that
$\|W_{\theta \lambdab}\|= \|\theta \wlam\| < 1$.
By \cite[Theorem~4.1]{Sto91}, $\theta \wlam$ and
consequently $\wlam$ are subnormal.

The ``moreover'' part is a direct consequence of
the fact that $\hat\lambdab$ is of exponential
growth (see e.g., \cite[(1.4)]{B-J-J-S18}).
   \end{proof}
   \begin{rem}
In the case when $\wlam$ is subnormal, the
relationship between the measure $\mu$ appearing
in Corollary~\ref{Ber-G-W}(ii), which is called
the {\em Berger measure} of $\wlam$, and the
triplet $(b,c,\nu)$ in Theorem~\ref{cpdws}(ii)
is as follows:
   \begin{gather*}
b = \int_{\rbb_+} (x-1) \D \mu(x), \quad c=0,
   \\ \label{zeg2}
\nu(\varDelta) = \int_{\varDelta} (x-1)^2 \D \mu(x),
\quad \varDelta \in \borel{\rbb_+},
   \\
\mu(\varDelta) =\int_{\varDelta} \frac{1}{(x-1)^2} \D
\nu(x) + \Big(\gamma_0 - \int_{\rbb_+} \frac{1}{(x-1)^2}
\D \nu(x)\Big) \delta_1(\varDelta), \quad \varDelta \in
\borel{\rbb_+}.
   \end{gather*}
We refer the reader to
\cite[Theorem~2.2.12]{Ja-Ju-St20} for more
details.
   \hfill $\diamondsuit$
   \end{rem}
   \section{Modelling CPD unilateral weighted shifts}
It follows from Theorem~\ref{cpdws} that if a
unilateral weighted shift $\wlam$ is CPD, then
the sequence $\gammab=\hat\lambdab$ has positive
real terms and takes the form \eqref{wnezero}.
Now we show the converse, namely if $\gammab$ is
a sequence of positive real numbers which is of
the form \eqref{wnezero}, then $\gammab =
\hat\lambdab$, where $\lambdab$ is a bounded
sequence of positive real numbers and so the
unilateral weighted shift $\wlam$ is CPD.
   \begin{thm} \label{wkwcpdws} Let
$\gammab=\{\gamma_{n}\}_{n=0}^{\infty}$ be a sequence of
real numbers such that
   \begin{gather} \label{gl-odd1}
\gamma_{n} > 0, \quad n\in \zbb_+,
   \\ \label{gl-odd2}
\gamma_{n} = 1 + bn + c n^2 + \int_{\rbb_+} Q_n \D\nu,
\quad n\in \zbb_+,
   \end{gather}
where $b\in \rbb$, $c\in \rbb_+$ and $\nu$ is a
compactly supported finite Borel measure on $\rbb_+$
such that $\nu(\{1\})=0$. Then the sequence
$\lambdab=\{\lambda_n\}_{n=0}^{\infty}$ defined by
   \begin{align} \label{wagua}
\lambda_n=\sqrt{\frac{\gamma_{n+1}}{\gamma_{n}}}, \quad
n\in \zbb_+,
   \end{align}
is bounded and the unilateral weighted shift
$\wlam$ is CPD. Moreover,
$\hat\lambdab=\gammab$.
   \end{thm}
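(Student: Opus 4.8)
The plan is to deal with the two easy claims at once and then spend the effort on the boundedness of $\lambdab$; once $\lambdab$ is bounded, the conditional positive definiteness of $\wlam$ is just an application of Theorem~\ref{cpdws}. Putting $n=0$ in \eqref{gl-odd2} and using $Q_0\equiv0$ (see \eqref{klaud}) gives $\gamma_0=1$. Since $\gamma_n>0$ for every $n$ by \eqref{gl-odd1}, the $\lambda_n$ in \eqref{wagua} are well-defined positive reals, and the product defining $\hat\lambda_n$ telescopes:
   \begin{align*}
\hat\lambda_n=\lambda_0^2\cdots\lambda_{n-1}^2=\prod_{j=0}^{n-1}\frac{\gamma_{j+1}}{\gamma_j}=\frac{\gamma_n}{\gamma_0}=\gamma_n,\qquad n\Ge1,
   \end{align*}
and $\hat\lambda_0=1=\gamma_0$, so $\hat\lambdab=\gammab$. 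By \eqref{wagua}, $\lambdab$ is bounded if and only if $\sup_n\gamma_{n+1}/\gamma_n<\infty$, which is what I must prove.

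I would begin with the remark that, by \eqref{del2} and \eqref{gl-odd2}, a direct computation yields $(\triangle^2\gammab)_n=2c+\int_{\rbb_+}x^n\,\D\nu(x)\Ge0$, so $\gammab$ is convex. If $\gammab$ is nonincreasing, then $\gamma_{n+1}/\gamma_n\Le1$ and there is nothing more to do; otherwise convexity forces $\gammab$ to be eventually strictly increasing, and then $\eta:=\inf_n\gamma_n>0$. Using the recurrence \eqref{rnx-0}, the equality $\nu(\{1\})=0$, and the estimate $\int_{\rbb_+}xQ_n(x)\,\D\nu(x)\Le\int_{[0,1)}Q_n\,\D\nu+\rho\int_{(1,\infty)}Q_n\,\D\nu$ with $\rho:=\max\{1,\sup\supp{\nu}\}$, one computes
   \begin{align*}
\frac{\gamma_{n+1}}{\gamma_n}\Le1+\frac{b+c(2n+1)+n\,\nu(\rbb_+)}{\gamma_n}+(\rho-1)\,\frac{\int_{(1,\infty)}Q_n\,\D\nu}{\gamma_n},
   \end{align*}
so it remains to bound the two fractions on the right, uniformly in $n$.

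The last term vanishes when $\rho=1$, i.e.\ when $\supp{\nu}\subseteq[0,1]$; and if $\nu\big((1,\infty)\big)>0$, then $\int_{(1,\infty)}Q_n\,\D\nu$, hence also $\gamma_n$, grows faster than every polynomial, because $Q_n(x)$ exceeds a fixed positive multiple of $x_0^n$ for large $n$ whenever $x_0>1$ belongs to $\supp{\nu}$; combined with $\int_{(1,\infty)}Q_n\,\D\nu\Le\gamma_n-(1+bn+cn^2)$ this keeps the last term bounded. For the middle term, whose numerator is a polynomial in $n$ of degree $\Le2$, I would distinguish three cases. If $\nu\big((1,\infty)\big)>0$ the super-polynomial growth of $\gamma_n$ again does the job. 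If $\nu\big((1,\infty)\big)=0$ and $c>0$, then $\gamma_n\Ge1+bn+cn^2$ is eventually comparable to $n^2$. If $\nu\big((1,\infty)\big)=0$ and $c=0$, then \eqref{monot-1}, \eqref{monot-2} and monotone convergence give $\gamma_n/n\to L:=b+\int_{[0,1)}(1-x)^{-1}\,\D\nu(x)$, and $L\Ge0$ because $\gamma_n>0$ for all $n$; when $L>0$ the sequence $\gamma_n$ is eventually comparable to $n$, while when $L=0$ the integral $\int_{[0,1)}(1-x)^{-1}\,\D\nu$ is finite and substituting $Q_n(x)=\dfrac{x^n-1+n(1-x)}{(1-x)^2}$ (see \eqref{rnx-1}) into \eqref{gl-odd2} collapses the expression to $\gamma_n=1-\int_{[0,1)}\dfrac{1-x^n}{(1-x)^2}\,\D\nu(x)$, which is nonincreasing in $n$, contradicting that $\gammab$ is eventually strictly increasing; hence $L=0$ does not occur. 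In every remaining case both fractions are bounded --- the contribution of the finitely many small $n$ being absorbed by $\eta>0$ --- so $\lambdab$ is bounded. Since then $\lambdab$ is a bounded sequence of positive reals with $\hat\lambda_n=\gamma_n=1+bn+cn^2+\int_{\rbb_+}Q_n\,\D\nu$, condition (ii) of Theorem~\ref{cpdws} holds, and that theorem gives that $\wlam$ is CPD.

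I expect the decisive difficulty to be the degenerate configuration $c=0$, $\supp{\nu}\subseteq[0,1]$, $\gamma_n/n\to0$: there the crude polynomial bounds are useless, and one must return to the exact L\'evy--Khinchin form \eqref{gl-odd2} to recognize that $\gammab$ is in fact nonincreasing. This is also the only place in the argument where the hypothesis ``$\gamma_n>0$ for \emph{every} $n$'' --- as opposed to only for $n$ large --- is genuinely needed.
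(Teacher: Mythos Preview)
Your proof is correct and follows the paper's strategy: reduce to $\sup_n\gamma_{n+1}/\gamma_n<\infty$, exploit the recurrence $Q_{n+1}(x)=xQ_n(x)+n$ (see \eqref{rnx-0}), and split into cases according to whether $\sup\supp{\nu}>1$, whether $c>0$, and the behaviour of $\gamma_n/n$ when $c=0$ and $\supp{\nu}\subseteq[0,1]$. Your organization differs in two respects. First, you observe at the outset that $(\triangle^2\gammab)_n=2c+\int x^n\,\D\nu\Ge0$, so $\gammab$ is convex and hence either nonincreasing (disposed of immediately) or eventually strictly increasing with $\inf_n\gamma_n>0$; the paper instead reaches the nonincreasing configuration only at the end, as its separate Case~5, via the identity \eqref{bi-pas}. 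Second, you write the main estimate as $1$ plus two fractions bounded independently, whereas the paper keeps the single ratio $\frac{p(n)+\max\{0,\vartheta\}\int Q_n\,\D\nu}{q(n)+\int Q_n\,\D\nu}$. Both routes rest on the same ingredients --- the exponential lower bound on $\int Q_n\,\D\nu$ when $\vartheta>1$ (cf.\ \eqref{cul-dra}), the limit $Q_n(x)/n\to(1-x)^{-1}$ on $[0,1)$ (cf.\ \eqref{Eun-J}), and the explicit collapse when $b+\int(1-x)^{-1}\,\D\nu=0$ --- so your three cases simply repackage the paper's five. One small correction to your closing remark: in the degenerate $L=0$ subcase you obtain a contradiction with ``eventually strictly increasing'', not with positivity, so that is not where positivity of \emph{every} $\gamma_n$ is genuinely needed; the full hypothesis $\gamma_n>0$ for all $n$ enters only to define $\lambda_n$ and to secure $\inf_n\gamma_n>0$.
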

   \begin{proof}
In view of Theorem~\ref{cpdws} and the obvious
identity $\hat\lambdab = \gammab$, it suffices
to show that the sequence $\lambdab$ is bounded,
or equivalently that
   \begin{align}  \label{lim-sup-r}
\limsup_{n\to \infty} \frac{\gamma_{n+1}}{\gamma_{n}} <
\infty.
   \end{align}
Before we proceed with the proof, we will make the
necessary preparations. Set $\vartheta=\sup\supp{\nu}$
with the standard convention that
$\sup{\emptyset}=-\infty$. First, we define the
polynomials $p$ and $q$ in $n$ by
   \begin{align*}
p(n) & = 1 + b(n+1) + n \nu(\rbb_+) + c (n+1)^2,
   \\
q(n) & = 1 + bn + c n^2.
   \end{align*}
Secondly, we observe that according to \eqref{immed-pos},
\eqref{rnx-0}, \eqref{gl-odd1} and \eqref{gl-odd2}, we
have
   \begin{align} \notag
\frac{\gamma_{n+1}}{\gamma_{n}} & = \frac{p(n) +
\int_{\rbb_+} x \,Q_n(x) \D\nu(x)}{q(n) + \int_{\rbb_+}
Q_n(x) \D\nu(x)}
   \\  \label{du-du-1}
& \Le \frac{p(n) + \max\{0,\vartheta\} \int_{\rbb_+}
Q_n \D\nu}{q(n) + \int_{\rbb_+} Q_n \D\nu}, \quad n\in
\zbb_+.
   \end{align}
Thirdly, using \eqref{immed-pos}, \eqref{monot-1} and
\eqref{monot-2} and applying Lebesgue's monotone
convergence theorem, we deduce that
   \begin{align} \label{Eun-J}
\lim_{n\to \infty} \int_{\rbb_+} \frac{Q_n}{n} \D\nu =
\int_{\rbb_+} \frac{1}{1-x} \D\nu(x) \quad
\text{whenever $\vartheta \Le 1$.}
   \end{align}

Now we go to the main part of the proof. We will split it
into several cases.

{\sf Case 1.} $\vartheta > 1$.

Take any $\theta \in (1,\vartheta)$. Since
$\vartheta \in \supp{\nu}$, we see that
$\nu([\theta,\vartheta]) > 0$. Using
\eqref{immed-pos}, we get
   \begin{align} \notag
\int_{\rbb_+} Q_n \D \nu & \Ge
\int_{[\theta,\vartheta]} Q_n \D \nu
   \\ \notag
& = \int_{[\theta,\vartheta]} (x^{n-2} +
2x^{n-3} + \ldots + (n-1) x^0\big) \D \nu(x)
   \\ \notag
& \Ge \int_{[\theta,\vartheta]} x^{n-2} \D
\nu(x)
   \\ \label{cul-dra}
& \Ge \theta^{n-2} \nu([\theta,\vartheta]),
\quad n \Ge 2.
   \end{align}
This in turn yields
   \begin{align} \label{str-2-zer}
\lim_{n \to \infty} \frac{n^2}{\int_{\rbb_+} Q_n \D\nu}
=0.
   \end{align}
Dividing the numerator and denominator of the last
fraction in \eqref{du-du-1} by $\int_{\rbb_+} Q_n
\D\nu$ and using \eqref{str-2-zer}, we deduce that
\eqref{lim-sup-r} holds.

{\sf Case 2.} $c>0$.

One can show that there exists $\alpha \in
(1,\infty)$ such that
   \begin{align} \label{du-du-2}
0 < p(n) \Le \alpha q(n) \textrm{ for $n$ large enough.}
   \end{align}
Hence by \eqref{du-du-1} we see that for $n$ large
enough,
   \begin{align*}
\frac{\gamma_{n+1}}{\gamma_{n}} & \Le \frac{p(n) +
\max\{0,\vartheta\} \int_{\rbb_+} Q_n \D\nu}{q(n) +
\int_{\rbb_+} Q_n \D\nu} \overset{\eqref{du-du-2}} \Le
\max \{\alpha, \vartheta\},
   \end{align*}
which yields \eqref{lim-sup-r}.

{\sf Case 3.} $\vartheta \Le 1$, $c=0$ and
$\int_{\rbb_+} \frac{1}{1-x} \D\nu(x) = \infty$.

Since $\int_{\rbb_+} Q_n \D\nu
>0$ for all integers $n\Ge 2$ (see \eqref{immed-pos}),
we get
   \begin{align*}
\frac{\gamma_{n+1}}{\gamma_{n}} &
\overset{\eqref{du-du-1}} \Le \frac{p(n)
+\int_{\rbb_+} Q_n \D\nu}{q(n) + \int_{\rbb_+} Q_n
\D\nu} = \frac{\frac{p(n)/n}{\int_{\rbb_+}
\frac{Q_n}{n} \D\nu} + 1}{\frac{q(n)/n}{\int_{\rbb_+}
\frac{Q_n}{n} \D\nu} + 1} , \quad n \Ge 2.
   \end{align*}
This together with \eqref{Eun-J} implies that
$\limsup_{n\to \infty} \frac{\gamma_{n+1}}{\gamma_{n}}
\Le 1$.

{\sf Case 4.} $\vartheta \Le 1$, $c=0$, $\int_{\rbb_+}
\frac{1}{1-x} \D\nu(x) < \infty$ and $b+\int_{\rbb_+}
\frac{1}{1-x} \D\nu(x)\neq 0$.

We can argue as follows:
   \begin{align} \notag
\lim_{n\to \infty}
\frac{\gamma_{n+1}}{\gamma_{n}} & = \lim_{n\to
\infty} \frac{\frac{1}{n} + \frac{n+1}{n}b +
\frac{n+1}{n}\int_{\rbb_+} \frac{Q_{n+1}}{n+1}
\D\nu}{\frac{1}{n} + b + \int_{\rbb_+}
\frac{Q_{n}}{n} \D\nu}
   \\ \label{Hu-hy}
& \hspace{-1ex}\overset{\eqref{Eun-J}}=
\frac{b+\int_{\rbb_+} \frac{1}{1-x}
\D\nu(x)}{b+\int_{\rbb_+} \frac{1}{1-x}
\D\nu(x)}=1.
   \end{align}

{\sf Case 5.} $\vartheta \Le 1$, $c=0$,
$\int_{\rbb_+} \frac{1}{1-x} \D\nu(x) < \infty$
and $b+\int_{\rbb_+} \frac{1}{1-x} \D\nu(x) = 0$.

First, observe that
   \begin{align} \notag
\gamma_n & \overset{\eqref{gl-odd2}}=1 + n \Big(b +
\int_{[0,1)} \frac{Q_n}{n} \D \nu\Big)
   \\  \notag
& \overset{\eqref{rnx-1}} = 1 + n \bigg(\Big(b +
\int_{[0,1)} \frac{1}{1-x} \D\nu(x)\Big) - \int_{[0,1)}
\frac{1-x^n}{n(1-x)^2} \D \nu(x)\bigg)
   \\  \label{bi-pas}
& \hspace{1ex} = 1 - \int_{[0,1)}
\frac{1-x^n}{(1-x)^2} \D \nu(x), \quad n \in \nbb.
   \end{align}
This implies that the sequence $\{\gamma_n\}_{n=1}^{\infty}$
is monotonically decreasing and so by \eqref{gl-odd1},
$\frac{\gamma_{n+1}}{\gamma_n} \Le 1$ for all $n\Ge 1$, which
completes the proof.
   \end{proof}
Note that Theorem~\ref{wkwcpdws} provides a
method of obtaining CPD unilateral weighted
shifts from positive CPD sequences of
exponential growth. The question of positivity
of CPD sequences will be discussed in the next
section.
   \section{\label{Sec.5}Positivity of CPD sequences}
Compared to Berger's characterization of
subnormal unilateral weighted shifts (see
Corollary~\ref{Ber-G-W}), the one given in
Theorem~\ref{cpdws} (see also
Theorem~\ref{wkwcpdws}) which characterizes CPD
unilateral weighted shifts requires answering
the question of when all terms of the sequence
appearing on the right-hand side of
\eqref{wnezero} are positive. In
Theorem~\ref{cpdws}(ii) there are three
parameters $b$, $c$ and $\nu$. Due to the number
of cases occurring in the proof of
Theorem~\ref{wkwcpdws} in which the parameter
$c$ is equal to $0$, treating the parameter $b$
as a variable seems to be the only possible
approach that may guarantee finding the solution
of this problem. That this is the case is shown
in Theorem~\ref{pos-wag-i}.

We begin by isolating hypothesis necessary to
state the main result of this section. Let $\nu$
be a compactly supported finite Borel measure on
$\rbb_+$ such that $\nu(\{1\})=0$, $c\in \rbb_+$
and $\{\gamma_n\}_{n=0}^{\infty}$ be the sequence
of continuous real functions on $\rbb$ defined by
   \begin{align} \label{wnezero-2}
\gamma_n(t) = 1 + t n + c n^2 + \int_{\rbb_+} Q_n
\D\nu, \quad t \in \rbb, \, n\in \zbb_+.
   \end{align}
Set
   \begin{align}  \notag
\vartheta & :=\sup\supp{\nu} \quad
\text{(convention: $\sup{\emptyset}=-\infty$)},
      \\ \notag
\varGamma_j & :=\int_{\rbb_+} \frac{1}{(1-x)^j}
\D \nu(x), \quad j=1,2 \text{ (whenever the
integrals make sense),}
   \\ \notag
\varOmega & := \{t\in \rbb \colon \gamma_n(t)
> 0  \text{ for all $n \in \zbb_+$}\},
   \\ \notag
\mathfrak{b} & :=\inf \varOmega.
   \end{align}
Clearly, $\varOmega = \{t\in \rbb \colon
\gamma_n(t) > 0 \text{ for all $n \in \nbb$}\}$.
Observing that the function $\gamma_n(\cdot)$ is
monotonically increasing for every $n \in
\zbb_+$, we get
   \begin{align} \label{zasl-dwa}
\textrm{if $t\in \varOmega$, then $[t,\infty)
\subseteq \varOmega$.}
   \end{align}
Since by \eqref{immed-pos}, $[0,\infty)
\subseteq \varOmega$ and $\lim_{t \to -\infty}
\gamma_n(t) = -\infty$ for every $n\in \nbb$, we
deduce that $\mathfrak{b} \in \rbb$. Combined
with \eqref{zasl-dwa}, this implies that the set
$\varOmega$ is an open or a closed subinterval
of $\rbb$ such that
   \begin{align*}
(\mathfrak{b},\infty) \subseteq \varOmega \subseteq
[\mathfrak{b},\infty).
   \end{align*}
Our goal is to determine whether or not
$\mathfrak{b}$ belongs to $\varOmega$. The answer
is given in Theorem~\ref{pos-wag-i} below. Before
doing it, we prove the following lemma.
   \begin{lem} \label{tak-tu-j}
Under the above assumptions, if $b_0 \in
[-\infty, \mathfrak{b})$ is such that
   \begin{align} \label{ju-ja-tr}
\lim_{n\to \infty} \gamma_n(t)=\infty, \quad t
\in (b_0,\infty),
   \end{align}
then $\mathfrak{b} \in (-\infty, 0)$ and
$\varOmega=(\mathfrak{b}, \infty)$; in
particular, $\gamma_n(\mathfrak{b}) \Ge 0$ for
all $n\in \zbb_+$ and
$\gamma_{n_0}(\mathfrak{b}) = 0$ for some
$n_0\in \nbb$.
   \end{lem}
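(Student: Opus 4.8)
The plan is to argue that $\mathfrak b < 0$, that $\gamma_n(\mathfrak b)\Ge 0$ for every $n$, and that $\gamma_{n_0}(\mathfrak b)=0$ for some $n_0$; the identity $\varOmega=(\mathfrak b,\infty)$ then follows immediately, since $\mathfrak b\notin\varOmega$ by the last fact while $(\mathfrak b,\infty)\subseteq\varOmega$ is already known. That $\mathfrak b < 0$ is the easy half: if we had $\mathfrak b\Ge 0$, then by \eqref{immed-pos} every $\gamma_n(0)$ is already $\Ge 1 > 0$ (indeed $\gamma_n(t)>0$ for $t\Ge 0$ and all $n$), so $0\in\varOmega$ and hence $\mathfrak b\Le 0$; combined with $\mathfrak b\Ge 0$ this gives $\mathfrak b = 0$. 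But the hypothesis \eqref{ju-ja-tr} together with $b_0 < \mathfrak b = 0$ says $\gamma_n(t)\to\infty$ for every $t$ in a neighbourhood of $0$ on the negative side, and since $\gamma_n(\cdot)$ is monotonically increasing this forces $\gamma_n(0)\to\infty$ as well — fine so far — but more usefully, for such a negative $t$ we will get $\gamma_n(t)>0$ for all large $n$, and a separate finiteness argument handles the finitely many small $n$; this will put a negative point into $\varOmega$, contradicting $\mathfrak b=0$. So $\mathfrak b\in(-\infty,0)$.

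Next I would establish $\gamma_n(\mathfrak b)\Ge 0$ for all $n\in\zbb_+$. Fix $n$. For every $t > \mathfrak b$ we have $t\in\varOmega$ by \eqref{zasl-dwa}, hence $\gamma_n(t)>0$; letting $t\downarrow\mathfrak b$ and using that $\gamma_n(\cdot)$ is continuous (it is, from the defining formula \eqref{wnezero-2}, being affine in $t$), we obtain $\gamma_n(\mathfrak b)\Ge 0$. This is automatic from the monotone-increasing and continuity properties already recorded before the lemma.

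Finally — and this is the real content — I would show $\gamma_{n_0}(\mathfrak b) = 0$ for some $n_0\in\nbb$. Suppose not; then $\gamma_n(\mathfrak b) > 0$ for all $n\in\zbb_+$, i.e. $\mathfrak b\in\varOmega$. I will contradict the minimality of $\mathfrak b$ by producing $t < \mathfrak b$ with $t\in\varOmega$. For $t$ slightly below $\mathfrak b$, write $\gamma_n(t) = \gamma_n(\mathfrak b) - (\mathfrak b - t)\,n = \gamma_n(\mathfrak b) - \varepsilon n$ with $\varepsilon = \mathfrak b - t > 0$. The hypothesis \eqref{ju-ja-tr}, applied at the point $\mathfrak b > b_0$, gives $\gamma_n(\mathfrak b)\to\infty$; but we need the stronger fact that $\gamma_n(\mathfrak b)/n\to\infty$, so that $\gamma_n(\mathfrak b) - \varepsilon n$ stays positive for all large $n$ for every fixed small $\varepsilon$. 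If $c > 0$ this is clear since $\gamma_n(\mathfrak b)\Ge c n^2 + O(n)$. If $c = 0$, then $\gamma_n(\mathfrak b) = 1 + \mathfrak b n + \int_{\rbb_+}Q_n\,\D\nu$, and using \eqref{monot-1}–\eqref{monot-2} together with the case analysis from the proof of Theorem~\ref{wkwcpdws} one sees that either $\int_{\rbb_+}\frac{1}{1-x}\D\nu(x)=\infty$ (forcing $\int Q_n\,\D\nu\big/n\to\infty$ by Fatou/monotone convergence, whence $\gamma_n(\mathfrak b)/n\to\infty$) or $\vartheta>1$ (in which case $\int Q_n\,\D\nu$ grows exponentially by the estimate \eqref{cul-dra}, again giving $\gamma_n(\mathfrak b)/n\to\infty$); the remaining subcases $\vartheta\Le 1$, $c=0$, $\varGamma_1<\infty$ force $\gamma_n(\mathfrak b)$ to be \emph{bounded} (see \eqref{bi-pas}), so $\gamma_n(\mathfrak b)\to\infty$ cannot hold there and these subcases are vacuous under \eqref{ju-ja-tr}. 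Thus $\gamma_n(\mathfrak b)/n\to\infty$, so for each $\varepsilon>0$ there is $N$ with $\gamma_n(t) = \gamma_n(\mathfrak b) - \varepsilon n > 0$ for all $n\Ge N$; and since $\gamma_0(t)=1>0$ while $\gamma_n(t)$ is continuous in $t$ and $\gamma_n(\mathfrak b)>0$ for $1\Le n < N$, choosing $\varepsilon$ small enough keeps $\gamma_n(t)>0$ for these finitely many $n$ as well. Hence $t\in\varOmega$ with $t<\mathfrak b$, contradicting $\mathfrak b=\inf\varOmega$. Therefore $\gamma_{n_0}(\mathfrak b)=0$ for some $n_0\in\nbb$ ($n_0\neq 0$ since $\gamma_0\equiv 1$), and in particular $\mathfrak b\notin\varOmega$, so $\varOmega=(\mathfrak b,\infty)$.

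The main obstacle is the last step: upgrading the given $\gamma_n(\mathfrak b)\to\infty$ to $\gamma_n(\mathfrak b)/n\to\infty$, which is what actually licenses pushing the threshold below $\mathfrak b$. This is where the structure of the representing measure $\nu$ re-enters, and it is resolved exactly by reusing the asymptotic estimates \eqref{monot-2} and \eqref{cul-dra} already developed for Theorem~\ref{wkwcpdws}: divergence of $\gamma_n(\mathfrak b)$ under \eqref{ju-ja-tr} is incompatible with the bounded-sequence subcases, so we land in precisely the subcases where linear-in-$n$ growth is strict.
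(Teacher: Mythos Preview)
Your contradiction argument in step C contains a genuine error. You claim that the subcase $\vartheta\Le 1$, $c=0$, $\varGamma_1<\infty$ is ``vacuous under \eqref{ju-ja-tr}'' because $\gamma_n(\mathfrak b)$ would be bounded there. This is false: formula \eqref{bi-pas} yields boundedness only at the specific parameter value $t=-\varGamma_1$, not at $t=\mathfrak b$. In this subcase one in fact has $\gamma_n(t)\to\infty$ for every $t>-\varGamma_1$, and the lemma is invoked later (case (ii-a) of Theorem~\ref{pos-wag-i}) precisely with $b_0=-\varGamma_1<\mathfrak b$. There $\gamma_n(\mathfrak b)/n\to\mathfrak b+\varGamma_1$, a \emph{finite} positive number, so your asserted $\gamma_n(\mathfrak b)/n\to\infty$ fails exactly in a case where the lemma is needed.

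The paper sidesteps this entirely by never working at $\mathfrak b$ and never invoking growth rates. It shows directly that $\varOmega$ is open on the left: given $b'\in\varOmega$, choose an auxiliary point $b_1\in(b_0,b')$. Since $\gamma_n(b_1)\to\infty$, the set of $n$ with $\gamma_n(b_1)\Le 0$ is finite; monotonicity of $t\mapsto\gamma_n(t)$ then gives $\gamma_n(t)>0$ for the cofinite complement and all $t\Ge b_1$, while continuity at $b'$ handles the finitely many exceptional $n$ for $t$ close to $b'$. No case analysis on $\nu$ is required. Your argument is repairable by the very same device: what you actually need is only $\liminf_{n}\gamma_n(\mathfrak b)/n>0$, and this follows at once by picking $b_1\in(b_0,\mathfrak b)$ and noting that $\gamma_n(\mathfrak b)=\gamma_n(b_1)+(\mathfrak b-b_1)n>(\mathfrak b-b_1)n$ for all sufficiently large $n$.
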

   \begin{proof}
Since $0\in \varOmega$, it remains to show that
for every $b' \in \varOmega$, there exists
$\varepsilon \in (0,\infty)$ such that
$(b'-\varepsilon, \infty) \subseteq \varOmega$.
For, assume that $b' \in \varOmega$. Take $b_1
\in (b_0, b')$ and set
   \begin{align*}
J_{b_1} = \{n\in \zbb_+\colon \gamma_n(b_1) > 0\}.
   \end{align*}
Applying \eqref{ju-ja-tr} to $t=b_1$, we see that the
set $\zbb_+ \setminus J_{b_1}$ is finite. This and
   \begin{align*}
\lim_{t \to b'} \gamma_n(t) = \gamma_n(b') > 0, \quad
n\in \zbb_+,
   \end{align*}
imply that there exists $\eta \in (0,\infty)$ such
that
   \begin{align} \label{asd}
\gamma_n(t) > 0, \quad n\in \zbb_+ \setminus J_{b_1},
\, t\in (b'-\eta, b'].
   \end{align}
Since the affine function $\gamma_n$ is monotonically
increasing for every $n \in \zbb_+$, we get
   \begin{align} \label{asd-2}
0 < \gamma_n(b_1) \Le \gamma_n(t), \quad n \in
J_{b_1}, \, t\in [b_1,\infty).
   \end{align}
Combining \eqref{zasl-dwa}, \eqref{asd} and
\eqref{asd-2}, we get $(b'-\varepsilon, \infty)
\subseteq \varOmega$ for some $\varepsilon \in
(0,\infty)$. This completes the proof.
   \end{proof}
   Now we are ready to state and prove the main result
of this section.
   \begin{thm} \label{pos-wag-i}
Assume that $\{\gamma_n\}_{n=0}^{\infty}$ is as in
\eqref{wnezero-2}. Then
   \begin{enumerate}
   \item[(i)]
$\varOmega = (\mathfrak{b},\infty)$ with $-\infty <
\mathfrak{b} < 0$ if any of the following conditions
holds{\em :}
   \begin{enumerate}
   \item[(i-a)] $\vartheta > 1$,
   \item[(i-b)] $\vartheta \Le 1$ and $c>0$,
   \item[(i-c)] $\vartheta \Le 1$, $c=0$ and $\varGamma_1 = \infty$,
   \end{enumerate}
   \item[(ii)] if $\vartheta \Le 1$, $c=0$ and
$\varGamma_1 < \infty$, then
   \begin{enumerate}
   \item[(ii-a)] $\varOmega = (\mathfrak{b},\infty)$ with
$-\varGamma_1 < \mathfrak{b} < 0$ if $1 < \varGamma_2
\Le \infty$,
   \item[(ii-b)] $\varOmega = (-\varGamma_1,\infty)$
if\/\footnote{Obviously, $\nu=\delta_0$ implies that
$\varGamma_1=\varGamma_2=1$ and $\vartheta=0$.}
$\varGamma_2 = 1$ and $\nu=\delta_0$,
   \item[(ii-c)] $\varOmega = [-\varGamma_1,\infty)$
if $\varGamma_2 = 1$ and $\nu\neq \delta_0$,
   \item[(ii-d)] $\varOmega = [-\varGamma_1,\infty)$
if $\varGamma_2 < 1$.
   \end{enumerate}
   \end{enumerate}
   \end{thm}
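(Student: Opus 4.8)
The plan is to dichotomize on whether $\gamma_n(t)\to\infty$ as $n\to\infty$ for every $t\in\rbb$. In the affirmative case --- which will turn out to coincide with part~(i) --- I would apply Lemma~\ref{tak-tu-j} with $b_0=-\infty$ (admissible since $\mathfrak b\in\rbb$), obtaining at once $\varOmega=(\mathfrak b,\infty)$ with $-\infty<\mathfrak b<0$. The complementary case is exactly part~(ii), and there I would extract a closed form for $\gamma_n(t)$ and read off $\varOmega$ directly from it.

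So for part~(i) it remains only to verify the divergence in each of the three subcases. If $\vartheta>1$, then for any $\theta\in(1,\vartheta)$ one has $\nu([\theta,\vartheta])>0$ and $\int_{\rbb_+}Q_n\,\D\nu\Ge\theta^{n-2}\nu([\theta,\vartheta])$ for $n\Ge2$, just as in Case~1 of the proof of Theorem~\ref{wkwcpdws}; this exponential term swamps $1+tn+cn^2$, so $\gamma_n(t)\to\infty$. If $\vartheta\Le1$ and $c>0$, then $\gamma_n(t)\Ge1+tn+cn^2\to\infty$ since $\int_{\rbb_+}Q_n\,\D\nu\Ge0$ by \eqref{immed-pos}. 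If $\vartheta\Le1$, $c=0$ and $\varGamma_1=\infty$, then $\tfrac1n\gamma_n(t)=\tfrac1n+t+\int_{[0,1)}\tfrac{Q_n}{n}\,\D\nu\to t+\varGamma_1=\infty$ by \eqref{monot-1}, \eqref{monot-2} and the monotone convergence theorem.

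For part~(ii) we have $c=0$, $\varGamma_1<\infty$ and $\supp{\nu}\subseteq[0,1)$ (by $\vartheta\Le1$ and $\nu(\{1\})=0$). Starting from \eqref{rnx-1} and setting $R_n:=\int_{[0,1)}\tfrac{1-x^n}{(1-x)^2}\,\D\nu(x)$, so that $Q_n(x)=\tfrac{n}{1-x}-\tfrac{1-x^n}{(1-x)^2}$ and hence $\int_{[0,1)}Q_n\,\D\nu=n\varGamma_1-R_n$, one obtains
\begin{align*}
\gamma_n(t)=1+n(t+\varGamma_1)-R_n,\qquad t\in\rbb,\ n\in\zbb_+.
\end{align*}
Two monotonicity facts are to be recorded: $(R_n)_{n=0}^\infty$ is nondecreasing with $R_1=\varGamma_1$ and $R_n\uparrow\varGamma_2\in[0,\infty]$; and $\tfrac{R_n}{n}=\varGamma_1-\int_{[0,1)}\tfrac{Q_n}{n}\,\D\nu$ is nonincreasing and tends to $0$ (by \eqref{monot-1}, \eqref{monot-2}, monotone convergence, and $\varGamma_1<\infty$), so that $a_n:=\tfrac{R_n-1}{n}\to0$ and $R_n\Le n\varGamma_1$ for all $n$. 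Since $\gamma_0\equiv1>0$, we get $t\in\varOmega\iff t+\varGamma_1>a_n$ for all $n\in\nbb$; thus $\mathfrak b=M-\varGamma_1$ with $M:=\sup_{n\in\nbb}a_n$, and $\varOmega=(\mathfrak b,\infty)$ if this supremum is attained, $\varOmega=[\mathfrak b,\infty)$ if not.

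The remaining task is to evaluate $M$ and decide attainment in the four subcases. If $\varGamma_2>1$, then $R_{n_1}>1$ for some $n_1$, so $a_{n_1}>0$; as $a_n\to0$ the supremum is attained, and since each $a_n\Le\varGamma_1-\tfrac1n<\varGamma_1$ we get $0<M<\varGamma_1$, so $\varOmega=(\mathfrak b,\infty)$ with $-\varGamma_1<\mathfrak b<0$, which is (ii-a). If $\nu=\delta_0$, then $R_n=1$ and $a_n=0$ for all $n\in\nbb$ and $\varGamma_1=1$, so $M=0$ is attained and $\varOmega=(-\varGamma_1,\infty)$, which is (ii-b). If $\varGamma_2=1$ and $\nu\neq\delta_0$, then $\nu((0,1))>0$ --- otherwise $\nu$ would be a point mass at $0$ and $\varGamma_2=1$ would force $\nu=\delta_0$ --- and then $\tfrac{1-x^n}{(1-x)^2}<\tfrac1{(1-x)^2}$ on $(0,1)$ gives $R_n<\varGamma_2=1$, hence $a_n<0$ for every $n$, so $M=0$ is not attained and $\varOmega=[-\varGamma_1,\infty)$, which is (ii-c). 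If $\varGamma_2<1$, then $R_n\Le\varGamma_2<1$ and again $a_n<0$ for all $n$, giving $\varOmega=[-\varGamma_1,\infty)$, which is (ii-d). The substance of the proof --- and the one genuinely delicate point --- is the attainment bookkeeping in part~(ii): whether $M=\sup_{n\in\nbb}a_n$ is attained is precisely what distinguishes the open left endpoint in (ii-a), (ii-b) from the closed one in (ii-c), (ii-d), and the small observation that $\varGamma_2=1$ together with $\supp{\nu}\subseteq\{0\}$ forces $\nu=\delta_0$ is what makes the split between (ii-b) and (ii-c) work.
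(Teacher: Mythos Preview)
Your proof is correct. Part~(i) matches the paper's argument almost verbatim (your treatment of (i-b) is actually slightly simpler, bypassing Lemma~\ref{huhu-zima} via the crude bound $\int_{\rbb_+} Q_n\,\D\nu\Ge 0$). In part~(ii), both you and the paper rest on the identity $\gamma_n(t)=1+n(t+\varGamma_1)-R_n$, but the bookkeeping differs: the paper specializes to $t=-\varGamma_1$, studies the sign of $\gamma_n(-\varGamma_1)=1-R_n$ subcase by subcase, and invokes Lemma~\ref{tak-tu-j} once more for (ii-a); you instead rewrite $t\in\varOmega$ as $t+\varGamma_1>a_n:=(R_n-1)/n$ and reduce the open/closed endpoint question to whether $\sup_{n}a_n$ is attained. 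Your repackaging is clean and makes the dichotomy transparent without a second appeal to Lemma~\ref{tak-tu-j}; the paper's version stays closer to the sequence $\gamma_n(-\varGamma_1)$ itself. One harmless slip: $\vartheta\Le 1$ together with $\nu(\{1\})=0$ does not literally force $\supp{\nu}\subseteq[0,1)$ (one could have $\vartheta=1$ as a non-atomic boundary point of the support), but since $\nu(\{1\})=0$ all your integrals over $[0,1)$ are unaffected.
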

   \begin{proof}
(i) We claim that if any of the cases \mbox{(i-a)},
\mbox{(i-b)} and \mbox{(i-c)} holds, then
\eqref{ju-ja-tr} is valid for $b_0=-\infty$. Indeed,
if $\vartheta
> 1$, then using \eqref{cul-dra} and
\eqref{str-2-zer}, we get
   \begin{align*}
\lim_{n\to\infty} \gamma_n(t) = \lim_{n\to\infty}
\int_{\rbb_+} Q_n \D\nu
\bigg(\frac{1+tn+cn^2}{\int_{\rbb_+} Q_n \D\nu} +
1\bigg) = \infty, \quad t\in \rbb.
   \end{align*}
If $\vartheta \Le 1$ and $c>0$, then by
Lemma~\ref{huhu-zima} we have
   \begin{align*}
\lim_{n\to\infty} \gamma_n(t) = \lim_{n\to\infty} n^2
\bigg(\frac{1+tn+cn^2}{n^2} + \int_{\rbb_+}
\frac{Q_n}{n^2} \D\nu \bigg) = \infty, \quad t\in
\rbb.
   \end{align*}
In turn, if $\vartheta \Le 1$, $c=0$ and $\varGamma_1
= \infty$, then \eqref{Eun-J} implies that
   \begin{align*}
\lim_{n\to\infty} \gamma_n(t) = \lim_{n\to\infty} n
\int_{\rbb_+} \frac{Q_n}{n} \D\nu \bigg(\frac{1+tn}{n
\int_{\rbb_+} \frac{Q_n}{n} \D\nu} + 1 \bigg) =
\infty, \quad t\in \rbb.
   \end{align*}
This proves our claim. Applying
Lemma~\ref{tak-tu-j}, we get (i).

(ii) Assume now that $\vartheta \Le 1$, $c=0$ and
$\varGamma_1 < \infty$. Using \eqref{Eun-J}, we obtain
   \begin{align} \label{zasl-dwu}
\lim_{n\to\infty} \gamma_n(t) = \lim_{n\to\infty} n
\bigg(\frac{1}{n} + t + \int_{\rbb_+} \frac{Q_{n}}{n}
\D\nu\bigg) = \begin{cases} -\infty & \text{if } t \in
(-\infty, -\varGamma_1),
   \\[1ex]
+ \infty & \text{if } t \in (-\varGamma_1,
\infty).
   \end{cases}
   \end{align}
We infer from \eqref{zasl-dwu} that
   \begin{gather} \label{ur-a-a}
- \varGamma_1 \Le \mathfrak{b}.
   \end{gather}
By the continuity of $\gamma_n$, we have
   \begin{align} \label{snig-tow}
\gamma_n(\mathfrak{b}) \Ge 0, \quad n\in \zbb_+.
   \end{align}
It follows from \eqref{bi-pas} that (recall that
$\gamma_0\equiv 1$)
   \begin{align} \label{dzw-dzu}
\gamma_n(-\varGamma_1) = 1 - \int_{[0,1)}
\frac{1-x^n}{(1-x)^2} \D \nu(x), \quad n \in \zbb_+.
   \end{align}
Applying Lebesgue's monotone convergence theorem to
\eqref{dzw-dzu}, we see that
   \begin{align} \label{dzw-dzs}
\gamma_n(-\varGamma_1) \searrow 1-\varGamma_2 \text{
as $n\to\infty$}.
   \end{align}

\mbox{(ii-a)} Suppose that $\varGamma_2 \in (1,
\infty]$. Then by \eqref{dzw-dzs}, $\lim_{n\to
\infty} \gamma_n(-\varGamma_1) \in [-\infty,
0)$. Combined with \eqref{ur-a-a} and
\eqref{snig-tow}, this implies that
$-\varGamma_1 < \mathfrak{b}$. Hence using
\eqref{zasl-dwu} and applying
Lemma~\ref{tak-tu-j} to $b_0=-\varGamma_1$, we
get \mbox{(ii-a)}.

\mbox{(ii-b)} If $\nu = \delta_0$, then the strict
monotonicity of $\gamma_n$ for $n\in \nbb$ implies
that
   \begin{align*}
\gamma_n(t) > \gamma_n(-1) = \gamma_n(-\varGamma_1)
\overset{\eqref{dzw-dzu}}= 0, \quad n\in \nbb, \, t\in
(-1, \infty).
   \end{align*}
Therefore, by \eqref{zasl-dwa}, $\varOmega =
(-1,\infty)$.

\mbox{(ii-c)} Assume now that $\varGamma_2 = 1$. We
claim that
   \begin{align} \label{ifroad}
\text{$\gamma_n(-\varGamma_1) > 0$ for all $n \in
\nbb$ if and only if $\nu \neq \delta_0$.}
   \end{align}
Indeed, if $n\in \nbb$ is fixed, then
   \begin{align*}
\gamma_n(-\varGamma_1) &
\overset{\eqref{dzw-dzu}}= \int_{[0,1)}
\frac{1}{(1-x)^2} \D \nu(x) - \int_{[0,1)}
\frac{1-x^n}{(1-x)^2} \D \nu(x)
   \\
&\hspace{1ex}= \int_{[0,1)} \frac{x^n}{(1-x)^2}
\D \nu(x) \Ge 0,
   \end{align*}
which implies that $\gamma_n(-\varGamma_1)=0$ if
and only if $\nu=\delta_0$ (use the fact that
$\varGamma_2=1$). This proves our claim. Since
$\gamma_0(\cdot)=1$, it follows from
\eqref{ifroad} that if $\nu \neq \delta_0$, then
$-\varGamma_1 \in \varOmega$, which together
with \eqref{zasl-dwa} and \eqref{ur-a-a} yields
\mbox{(ii-c)}.

\mbox{(ii-d)} Suppose that $\varGamma_2 < 1$. It
follows from \eqref{dzw-dzs} that $-\varGamma_1 \in
\varOmega$, which together with \eqref{zasl-dwa} and
\eqref{ur-a-a} implies that $\varOmega =
[-\varGamma_1, \infty)$. This completes the proof.
   \end{proof}
The quantity $\mathfrak{b}$ that appears in
Theorem~\ref{pos-wag-i} can be expressed in terms
of a new sequence $\{\zeta_n\}$ depending only on
the parameters $c$ and $\nu$.
   \begin{pro}\label{pos-wugi}
Let $\{\gamma_n\}_{n=0}^{\infty}$ be as in
\eqref{wnezero-2}. Then $\mathfrak{b} = -\inf_{n\in
\nbb} \zeta_n$, where
   \begin{align*}
\zeta_n=\frac{1}{n} + c n + \int_{\rbb_+}
\frac{Q_n}{n} \D \nu, \quad n \in \nbb.
   \end{align*}
Moreover, under the notation of Theorem~{\em
\ref{pos-wag-i}}, the following statements are
valid{\em :}
   \begin{enumerate}
   \item[(i)] in the cases {\em \mbox{(i-a)}}, {\em
\mbox{(i-b)}}, {\em \mbox{(i-c)}} and {\em
\mbox{(ii-a)}}, $\mathfrak{b} = -\min_{n\in \nbb}
\zeta_n$,
   \item[(ii)] in the case {\em \mbox{(ii-b)}},
$\mathfrak{b} = -1 = - \zeta_n$ for all $n \in \nbb$,
   \item[(iii)] in the cases {\em \mbox{(ii-c)}}
and {\em \mbox{(ii-d)}}, the sequence
\mbox{$\big\{\zeta_n\big\}_{n=1}^{\infty}$} is
strictly decreasing.
   \end{enumerate}
   \end{pro}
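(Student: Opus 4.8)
The plan rests on a single elementary observation: for $n\in\nbb$ the function $\gamma_n(\cdot)$ in \eqref{wnezero-2} is affine with slope $n$ and $\gamma_n(0)=n\zeta_n$, so that
\[
\gamma_n(t)=n\,(t+\zeta_n),\qquad n\in\nbb,\ t\in\rbb .
\]
Since $\gamma_0\equiv 1$, this yields $\varOmega=\{t\in\rbb\colon t>-\zeta_n\ \text{for all }n\in\nbb\}$. Hence $\varOmega$ equals $(-\inf_{n\in\nbb}\zeta_n,\infty)$ when the infimum $\inf_{n\in\nbb}\zeta_n$ is attained and $[-\inf_{n\in\nbb}\zeta_n,\infty)$ otherwise; in either case $\mathfrak b=\inf\varOmega=-\inf_{n\in\nbb}\zeta_n$, which is the first assertion.

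For (i), in each of the cases \mbox{(i-a)}--\mbox{(i-c)} and \mbox{(ii-a)} Theorem~\ref{pos-wag-i} already tells us that $\varOmega=(\mathfrak b,\infty)$ is open, so $\mathfrak b\notin\varOmega$; by the dichotomy above this can happen only if $\inf_{n\in\nbb}\zeta_n$ is attained, i.e.\ $\mathfrak b=-\min_{n\in\nbb}\zeta_n$. For (ii) the standing assumptions of Theorem~\ref{pos-wag-i}(ii) give $c=0$ and, in case \mbox{(ii-b)}, $\nu=\delta_0$; from \eqref{klaud} one finds $Q_n(0)=n-1$ for $n\in\nbb$, so $\zeta_n=\frac1n+\frac{n-1}{n}=1$ for every $n\in\nbb$, whence $\mathfrak b=-1=-\zeta_n$.

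For (iii) we have $c=0$ and $\vartheta\Le 1$, hence $\nu$ is carried by $[0,1)$. Using $\zeta_n=\gamma_n(0)/n$, the desired inequality $\zeta_{n+1}<\zeta_n$ is equivalent to $n\bigl(\gamma_{n+1}(0)-\gamma_n(0)\bigr)<\gamma_n(0)$. By \eqref{del1}, $\gamma_{n+1}(0)-\gamma_n(0)=\int_{[0,1)}\sum_{j=0}^{n-1}x^j\,\D\nu(x)$, and comparing \eqref{klaud} with $n\sum_{j=0}^{n-1}x^j$ gives the pointwise identity $Q_n(x)-n\sum_{j=0}^{n-1}x^j=-\sum_{j=0}^{n-1}(j+1)x^j$. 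Consequently
\[
\gamma_n(0)-n\bigl(\gamma_{n+1}(0)-\gamma_n(0)\bigr)=1-\int_{[0,1)}\sum_{j=0}^{n-1}(j+1)x^j\,\D\nu(x),
\]
and by the monotone convergence theorem the integral on the right increases to $\int_{[0,1)}(1-x)^{-2}\,\D\nu=\varGamma_2$. When $\varGamma_2<1$ (case \mbox{(ii-d)}) the right-hand side is at least $1-\varGamma_2>0$. When $\varGamma_2=1$ (case \mbox{(ii-c)}) the right-hand side equals $\int_{[0,1)}\sum_{j\ge n}(j+1)x^j\,\D\nu(x)$, whose integrand is strictly positive on $(0,1)$; it is therefore positive because $\nu\bigl((0,1)\bigr)>0$, the latter holding since $\nu\bigl((0,1)\bigr)=0$ would make $\nu$ a multiple of $\delta_0$ and then $\varGamma_2=\nu(\{0\})=1$ would force $\nu=\delta_0$, against the hypothesis of \mbox{(ii-c)}. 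In both cases $\zeta_{n+1}<\zeta_n$.

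The main obstacle is the borderline case \mbox{(ii-c)}: the inequality there is only barely strict and its proof hinges on extracting positivity from the combination $\nu\neq\delta_0$ and $\varGamma_2=1$. The remaining parts reduce to routine bookkeeping around the identity $\zeta_n=\gamma_n(0)/n$ and the dichotomy for $\varOmega$.
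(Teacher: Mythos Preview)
Your proof is correct. The identity $\gamma_n(t)=n(t+\zeta_n)$ and the resulting dichotomy for $\varOmega$ give the main assertion and (i) cleanly; part (ii) is the same direct computation as in the paper.

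The genuine difference is in (iii). The paper evaluates at $t=-\varGamma_1$ rather than at $t=0$: it writes $\zeta_n=\gamma_n(-\varGamma_1)/n+\varGamma_1$ and then observes that in cases \mbox{(ii-c)} and \mbox{(ii-d)} Theorem~\ref{pos-wag-i} already gives $\gamma_n(-\varGamma_1)>0$, while \eqref{dzw-dzs} gives $\gamma_n(-\varGamma_1)\searrow 1-\varGamma_2$; a positive non-increasing sequence divided by $n$ is strictly decreasing, and one is done. Your route stays at $t=0$, establishes the pointwise identity $Q_n(x)-n\sum_{j=0}^{n-1}x^j=-\sum_{j=0}^{n-1}(j+1)x^j$, and then uses $\sum_{j\ge 0}(j+1)x^j=(1-x)^{-2}$ together with monotone convergence. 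The paper's argument is shorter because it recycles facts already proved in Theorem~\ref{pos-wag-i}, whereas yours is self-contained and makes the borderline case \mbox{(ii-c)} (where strictness hinges on $\nu((0,1))>0$) completely explicit.
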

   \begin{proof}
(i) It follows from Theorem~\ref{pos-wag-i} that in
each of the cases \mbox{(i-a)}, \mbox{(i-b)},
\mbox{(i-c)} and \mbox{(ii-a)},
$\varOmega=(\mathfrak{b}, \infty)$ with $-\infty <
\mathfrak{b} < 0$. By the continuity of $\gamma_n$, we
have
   \begin{align} \label{Eun-num}
\gamma_n(\mathfrak{b}) \Ge 0, \quad n \in \nbb,
   \end{align}
which implies that
   \begin{align} \label{chrup}
\mathfrak{b} \Ge - \inf_{n\in \nbb} \zeta_n.
   \end{align}
Since $\mathfrak{b} \notin \varOmega$, we infer from
\eqref{Eun-num} that there exists $n_0 \in \nbb$ such
that $\gamma_{n_0}(\mathfrak{b}) = 0$. This yields
$\mathfrak{b} = - \zeta_{n_0}$. Combined with
\eqref{chrup}, this implies that $\mathfrak{b} =
-\min_{n\in \nbb} \zeta_n$.

(ii) Since $c=0$, this statement is easily seen
to be true (see
Theorem~\ref{pos-wag-i}\mbox{(ii-b)}).

(iii) By Theorem~\ref{pos-wag-i}, in the cases
\mbox{(ii-c)} and \mbox{(ii-d)},
$\gamma_n(-\varGamma_1) > 0$ for all $n\in \zbb_+$.
This together with \eqref{dzw-dzs} implies that the
sequence
$\big\{\frac{\gamma_n(-\varGamma_1)}{n}\big\}_{n=1}^{\infty}$
is strictly decreasing to $0$. Since
   \begin{align*}
\zeta_n = \frac{\gamma_n(-\varGamma_1)}{n} +
\varGamma_1, \quad n \in \nbb,
   \end{align*}
the sequence $\big\{\zeta_n\big\}_{n=1}^{\infty}$ is
strictly decreasing to $\varGamma_1$, which by
Theorem~\ref{pos-wag-i} is equal to $-\mathfrak{b}$.
Hence $\mathfrak{b} = -\inf_{n\in \nbb} \zeta_n$.

Summarizing the above considerations, we conclude that
$\mathfrak{b} = -\inf_{n\in \nbb} \zeta_n$ in all
cases. This completes the proof.
   \end{proof}
We conclude this section by making the following
observation related to Theorem~\ref{pos-wag-i}
and Proposition~\ref{pos-wugi}.
   \begin{rem}
It follows from Theorem~\ref{pos-wag-i} that
$-\infty < \mathfrak{b} < 0$ in all cases except
$c=0$ and $\nu=0$.
   \hfill $\diamondsuit$
   \end{rem}
   \section{The representing triplet of $\wlam$}
In this section we provide an explicit
description of the representing triplet
$(B,C,F)$ of a CPD unilateral weighted shift
$\wlam$. Recall that an operator $T\in
\ogr{\hh}$ is said to be {\em diagonal} with
diagonal terms $\{\xi_n\}_{n=0}^{\infty}
\subseteq \cbb$ with respect to an orthonormal
basis $\{f_n\}_{n=0}^{\infty}$ of $\hh$ if $Tf_n
= \xi_n f_n$ for every $n\in \zbb_+$. It is well
known that a diagonal operator $T$ with diagonal
terms $\{\xi_n\}_{n=0}^{\infty}$ is compact if
and only if $\lim_{n\to \infty} \xi_n = 0$.
   \begin{thm} \label{truplyt}
Let $\wlam$ be a CPD unilateral weighted shift
with weights
$\lambdab=\{\lambda_k\}_{k=0}^{\infty}$ and let
$(B,C,F)$ be the representing triplet of
$\wlam$. Then $B$, $C$ and $F(\varDelta)$, where
$\varDelta \in \borel{\rbb_+}$, are diagonal
operators with respect to
$\{e_k\}_{k=0}^{\infty}$ with diagonal terms
$\{b_k\}_{k=0}^{\infty}$,
$\{c_k\}_{k=0}^{\infty}$ and
$\{\nu_k(\varDelta)\}_{k=0}^{\infty}$,
respectively, given by
   \begin{align} \label{tyl-tru}
b_k=\frac{\hat\lambda_{k+1} - \hat\lambda_{k}
-c}{\hat\lambda_k}, \quad c_k =
\frac{c}{\hat\lambda_k}, \quad \nu_k(\varDelta)
= \frac{1}{\hat\lambda_k} \int_{\varDelta} x^k
\D \nu(x),
   \end{align}
where $\hat\lambda_k$ are as in \eqref{mur-hupy}
and $(b,c,\nu)$ is as in Theorem~{\em
\ref{cpdws}(ii)}. Moreover, the operator $C$ is
compact.
   \end{thm}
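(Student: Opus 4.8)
The plan is to compute the representing triplet componentwise by exploiting the fact that for a unilateral weighted shift, the vectors $e_k$ diagonalize all the relevant operators. First I would observe that for each $k\in\zbb_+$, Theorem~\ref{cpdops} tells us that $(\is{Be_k}{e_k},\is{Ce_k}{e_k},\is{F(\cdot)e_k}{e_k})$ is the representing triplet of the CPD sequence $\{\|\wlam^n e_k\|^2\}_{n=0}^{\infty}$. By \eqref{wnuk} (or directly from the action of $\wlam$) we have $\|\wlam^n e_k\|^2 = \hat\lambda_{k+n}/\hat\lambda_k = \gamma^{(k)}_n/\hat\lambda_k$, where $\gammab=\hat\lambdab$. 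Thus the sequence associated to $e_k$ is $\tfrac{1}{\hat\lambda_k}\hat\lambdab^{(k)}$, i.e. a positive scalar multiple of the shifted sequence $\hat\lambdab^{(k)}$.

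Next I would apply Lemma~\ref{szuf-eru} to identify the representing triplet of $\hat\lambdab^{(k)}$. Since $\wlam$ is CPD, $\hat\lambdab^{(k)}$ is CPD for all $k$ (this follows since $\{\|\wlam^n e_k\|^2\}$ is CPD, being of the form \eqref{wnuk}; alternatively it is exactly the content needed in the proof of Theorem~\ref{cpdws}), so Lemma~\ref{szuf-eru}(iii) gives its representing triplet $(b_k',c_k',\nu_k')$ with $b_k' = b + 2kc + \sum_{j=0}^{k-1}\int_{\rbb_+} x^j\D\nu(x) = \hat\lambda_{k+1}/\hat\lambda_k\cdot\hat\lambda_k\cdots$, more usefully $b_k' = \hat\lambda^{(k)}_1 - \hat\lambda^{(k)}_0 - c = \hat\lambda_{k+1} - \hat\lambda_k - c$ by \eqref{haha1} and \eqref{euju}, $c_k' = c$ by \eqref{haha2}, and $\nu_k'(\varDelta) = \int_\varDelta x^k\D\nu(x)$ by \eqref{haha3}. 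Then I would use the elementary scaling property of representing triplets: if $\gammab$ has representing triplet $(b,c,\nu)$ then $\alpha\gammab$ (for $\alpha>0$) has representing triplet $(\alpha b,\alpha c,\alpha\nu)$ — this is immediate from \eqref{cdr4}. Applying this with $\alpha = 1/\hat\lambda_k$ yields precisely the formulas \eqref{tyl-tru} for $b_k,c_k,\nu_k(\varDelta)$. Since $\is{Be_k}{e_k}=b_k$, $\is{Ce_k}{e_k}=c_k$, $\is{F(\varDelta)e_k}{e_k}=\nu_k(\varDelta)$ for every $k$ and these are the diagonal entries, and since by Theorem~\ref{cpdops} $\supp F\subseteq[0,r(\wlam)^2]$ forces $F(\varDelta)$ and $B$, $C$ to commute with the diagonal structure, I would then argue that $B$, $C$, $F(\varDelta)$ are in fact diagonal with respect to $\{e_k\}$. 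The cleanest way: $C\ge 0$ and $\is{Ce_k}{e_k}=c/\hat\lambda_k$, while from $T^{*n}T^ne_k = \|\wlam^ne_k\|^2 e_k$ (each $e_k$ is an eigenvector of every $T^{*n}T^n$), the relation $T^{*n}T^n = I + nB + n^2C + \int Q_n\,\D F$ evaluated on the $e_k$'s together with uniqueness of the representing triplet of each scalar sequence forces each of $B$, $C$, $F(\varDelta)$ to be diagonal in $\{e_k\}$; I would spell this out by noting $e_k$ being a common eigenvector of the left side and $I$ means $nB+n^2C+\int Q_n\D F$ maps $e_k$ into $\cbb e_k$ for all $n$, and varying $n$ (e.g. $n=2,3,4$) separates $B,C,F$.

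The main obstacle, and the only nonroutine part, is the final claim that $C$ is compact. For this I would need $\lim_{k\to\infty} c_k = \lim_{k\to\infty} c/\hat\lambda_k = 0$, equivalently $\hat\lambda_k\to\infty$ — unless $c=0$, in which case $C=0$ is trivially compact. So assume $c>0$. I would invoke Theorem~\ref{cpdws}: $\hat\lambda_n = 1 + bn + cn^2 + \int_{\rbb_+}Q_n\,\D\nu$, and since $Q_n(x)\ge 0$ for $x\in\rbb_+$ by \eqref{immed-pos} (indeed $Q_n\ge 1$ for $n\ge 2$) and $\hat\lambda_n>0$ for all $n$, the dominant term $cn^2$ with $c>0$ forces $\hat\lambda_n\to\infty$ as $n\to\infty$ (the integral term is nonnegative, and $1+bn+cn^2\to\infty$). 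Hence $c_k = c/\hat\lambda_k\to 0$, so the diagonal operator $C$ is compact. I would present the $c=0$ and $c>0$ cases as a short dichotomy and conclude.
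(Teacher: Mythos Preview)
Your computation of the diagonal entries $b_k$, $c_k$, $\nu_k(\varDelta)$ via Lemma~\ref{szuf-eru} and the scaling of representing triplets is correct and matches the paper's, as does your argument for the compactness of $C$. The step that is not yet justified is the passage from knowing the diagonal entries $\is{Be_k}{e_k}$, $\is{Ce_k}{e_k}$, $\is{F(\varDelta)e_k}{e_k}$ to knowing that $B$, $C$, $F(\varDelta)$ are actually diagonal. Your first remark (that $\supp F\subseteq[0,r(\wlam)^2]$ ``forces $F(\varDelta)$ and $B$, $C$ to commute with the diagonal structure'') has no content. Your second idea --- that each $e_k$ is an eigenvector of every $\wlam^{*n}\wlam^n$, hence of $D_n:=nB+n^2C+\int Q_n\,\D F$ --- is the right observation, but the claim that $n=2,3,4$ already separates $B,C,F$ is wrong: each new $n$ brings in a new moment operator $\int x^{n-2}\,\D F$, so finitely many values of $n$ never suffice. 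One correct completion is to take second differences, $D_{n+2}-2D_{n+1}+D_n=2C+\int x^n\,\D F$ (by \eqref{del2}), conclude that $\int(x^n-1)\,\D F$ is diagonal for every $n$, and then use $F(\{1\})=0$ together with determinacy of compactly supported complex measures to force every off-diagonal measure $\is{F(\cdot)e_k}{e_j}$ to vanish; diagonality of $C$ and then of $B$ follows at once.

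The paper avoids this separation problem altogether by reversing the logic: it \emph{defines} diagonal operators $\tilde B,\tilde C,\tilde F(\varDelta)$ with the entries \eqref{tyl-tru}, checks directly that they are bounded and that $\tilde F$ is a compactly supported semispectral measure with $\tilde F(\{1\})=0$, verifies the identity $\wlam^{*n}\wlam^n = I + n\tilde B + n^2\tilde C + \int Q_n\,\D\tilde F$ on each $e_k$ using \eqref{wije-su}, and then invokes the uniqueness clause of Theorem~\ref{cpdops} to conclude $(B,C,F)=(\tilde B,\tilde C,\tilde F)$. This ``build the candidate and use uniqueness'' route is shorter and sidesteps the measure-theoretic argument above; it is the cleaner way to close the gap in your outline.
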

   \begin{proof}
All diagonal operators appearing in this proof
are regarded with respect to the standard
orthonormal basis $\{e_k\}_{k=0}^{\infty}$ of
$\ell^2$. First observe that for $n\in \zbb_+$,
$\wlam^{*n}\wlam^n$ is the diagonal operator
with diagonal terms
$\big\{\frac{\hat\lambda_{k+n}}{\hat\lambda_{k}}\big\}_{k=0}^{\infty}$.
Applying Theorem~\ref{cpdws} and
Lemma~\ref{szuf-eru} to $\gammab=\hat \lambdab$,
we deduce that
   \begin{align} \label{wije-su}
\frac{\hat\lambda_{k+n}}{\hat\lambda_k} = 1 +
b_k n + c_k n^2 + \int_{\rbb_+} Q_n(x) \D
\nu_k(x), \quad k,n\in \zbb_+.
   \end{align}
It follows from the proof of
Theorem~\ref{pos-wag-i}(i) that if any of the
cases \mbox{(i-a)} and \mbox{(i-b)} holds, then
$\lim_{k\to \infty} \hat\lambda_k=\infty$. This
implies that whatever $c$ is, $\lim_{k\to
\infty} c_k = 0$. Hence, the diagonal operator
with diagonal terms $\{c_k\}_{k=0}^{\infty}$,
say $\tilde C$, is a compact positive operator.
Since the sequence
$\{\lambda_k\}_{k=0}^{\infty}$ is bounded and
   \begin{align}  \label{pug-ra}
\frac{\hat\lambda_{k+n}}{\hat\lambda_k}
\overset{\eqref{self-map}}= \lambda_k^2 \cdots
\lambda_{k+n-1}^2, \quad k\in \zbb_+, \,n\in
\nbb,
   \end{align}
we deduce that for every $n\in \zbb_+$, the
sequence
$\big\{\frac{\hat\lambda_{k+n}}{\hat\lambda_k}\big\}_{k=0}^{\infty}$
is bounded. This together with \eqref{tyl-tru}
implies that the sequence
$\{b_k\}_{k=0}^{\infty}$ is bounded. Therefore,
the diagonal operator with diagonal terms
$\{b_k\}_{k=0}^{\infty}$, say $\tilde B$, is
bounded and selfadjoint. As a consequence of
\eqref{wije-su} and \eqref{pug-ra}, we conclude
that for every $n\in \zbb_+$, the sequence
$\{\int_{\rbb_+} Q_n(x) \D
\nu_k(x)\}_{k=0}^{\infty}$ is bounded. Since
$Q_2\equiv 1$, we see that for every $\varDelta
\in \borel{\rbb_+}$, the sequence
$\{\nu_k(\varDelta)\}_{k=0}^{\infty}$ is
bounded. Thus for every $\varDelta \in
\borel{\rbb_+}$, the diagonal operator with
diagonal terms
$\{\nu_k(\varDelta)\}_{k=0}^{\infty}$, say
$\tilde F(\varDelta)$, is bounded and positive.
Note that
   \begin{align} \label{hra-pi}
\is{\tilde F(\varDelta) f}{f} =
\sum_{k=0}^{\infty} |\is{f}{e_k}|^2
\nu_k(\varDelta), \quad \varDelta \in
\borel{\rbb_+}, \, f \in \ell^2.
   \end{align}
This together with \eqref{tyl-tru} implies that
$\tilde F$ is a compactly supported semispectral
measure such that $\tilde F(\{1\})=0$. Hence,
using standard measure and integration
techniques, we obtain
   \begin{align} \notag
\Big\langle \Big(\int_{\rbb_+} Q_n \D \tilde F
\Big) f,f \Big\rangle & = \int_{\rbb_+} Q_n(x)
\is{\tilde F(\D x) f}{f}
   \\  \label{alu-e}
& \hspace{-1ex}\overset{\eqref{hra-pi}} =
\sum_{k=0}^{\infty} |\is{f}{e_k}|^2
\int_{\rbb_+} Q_n(x) \D \nu_k(x), \quad f \in
\ell^2, \, n \in \zbb_+.
   \end{align}
This in turn yields
   \begin{align*}
\is{\wlam^{*n}\wlam^n f}{f} & =
\sum_{k=0}^{\infty} |\is{f}{e_k}|^2
\frac{\hat\lambda_{k+n}}{\hat\lambda_{k}}
   \\
& \hspace{-3.7ex}\overset{\eqref{wije-su} \&
\eqref{alu-e}} = \Big\langle \Big(I + \tilde B n
+ \tilde C n^2 + \int_{\rbb_+} Q_n \D \tilde
F\Big)f, f\Big\rangle, \quad f \in \ell^2, \, n
\in \zbb_+.
   \end{align*}
Applying the uniqueness part of
Theorem~\ref{cpdops}, we see that $(B, C, F) =
(\tilde B, \tilde C,\tilde F)$. This completes
the proof.
   \end{proof}
Recalling that the operator $C$ appearing in
Theorem~\ref{truplyt} is compact, the natural
question arises as to whether $B$ and
$F(\rbb_+)$ are compact operators. In general,
the answer is in the negative. Let us discuss
this in more detail. We begin with the
affirmative case.
   \begin{pro}\label{bnocr}
Under the assumptions and notation of
Theorem~{\em \ref{truplyt}}, suppose that
$\wlam$ satisfies Case~$4$ of the proof of
Theorem~{\em \ref{wkwcpdws}}. Then $B$ is
compact and $\wlam$ is not subnormal.
   \end{pro}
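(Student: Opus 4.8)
Proposition~\ref{bnocr} asserts two things under the standing assumption that $\wlam$ falls under Case~4 of the proof of Theorem~\ref{wkwcpdws}, i.e. $\vartheta \Le 1$, $c=0$, $\varGamma_1 = \int_{\rbb_+}\frac{1}{1-x}\D\nu(x) < \infty$, and $b + \varGamma_1 \neq 0$. The plan is to handle the two claims separately, using Theorem~\ref{truplyt} for the compactness of $B$ and Corollary~\ref{Ber-G-W} together with the explicit form of $\gamma_n$ for the failure of subnormality.

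\emph{Compactness of $B$.} By Theorem~\ref{truplyt}, $B$ is the diagonal operator with diagonal terms $b_k = \frac{\hat\lambda_{k+1} - \hat\lambda_k - c}{\hat\lambda_k}$, and since $c=0$ here this reduces to $b_k = \frac{\hat\lambda_{k+1}}{\hat\lambda_k} - 1 = \lambda_k^2 - 1$. A diagonal operator is compact iff its diagonal terms tend to $0$, so it suffices to show $\lambda_k^2 \to 1$, i.e. $\hat\lambda_{k+1}/\hat\lambda_k \to 1$. But this is precisely the content of Case~4 of the proof of Theorem~\ref{wkwcpdws}: the computation \eqref{Hu-hy} there shows $\lim_{n\to\infty}\hat\lambda_{n+1}/\hat\lambda_n = \frac{b+\varGamma_1}{b+\varGamma_1} = 1$, where the hypothesis $b + \varGamma_1 \neq 0$ is exactly what makes that limit well-defined and equal to $1$. (One should note that $\gammab = \hat\lambdab$ here, so the $\gamma_n$ of the Theorem~\ref{wkwcpdws} proof are the $\hat\lambda_n$.) Hence $\lim_k b_k = 0$ and $B$ is compact.

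\emph{Non-subnormality.} Suppose for contradiction that $\wlam$ is subnormal. Then by Corollary~\ref{Ber-G-W}, $\hat\lambdab$ is a (non-degenerate, compactly supported) Stieltjes moment sequence with Berger measure $\mu$. On the other hand, in Case~4 the sequence $\hat\lambda_n$ is affine in $n$ up to the bounded correction term coming from $\int_{[0,1)}\frac{Q_n}{n}\D\nu$; more precisely, by the analysis in Theorem~\ref{wkwcpdws} (the computation leading to \eqref{bi-pas}, applied with $b + \varGamma_1 \neq 0$ rather than $=0$), one gets $\hat\lambda_n = 1 + n(b+\varGamma_1) - \int_{[0,1)}\frac{1-x^n}{(1-x)^2}\D\nu(x)$, so that $\hat\lambda_n \sim (b+\varGamma_1)\, n$ as $n\to\infty$. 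In particular $\hat\lambda_n^{1/n} \to 1$ and $\hat\lambda_n$ grows only linearly. But a non-degenerate Stieltjes moment sequence with $\limsup \hat\lambda_n^{1/n} = \|\wlam\|^2$; if that value is $1$ and the sequence is bounded below away from $0$... the cleanest route is this: for a subnormal weighted shift with Berger measure $\mu$, $\hat\lambda_n = \int_{\rbb_+} x^n \D\mu(x)$, and linear growth $\hat\lambda_n \asymp n$ is incompatible with this — if $\supp\mu \subseteq [0,1]$ then $\hat\lambda_n$ is bounded, and if $\supp\mu \not\subseteq [0,1]$ then $\hat\lambda_n$ grows at least geometrically; in neither case can it grow exactly linearly (with nonzero slope). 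This contradiction, together with the observation that $b + \varGamma_1 > 0$ must hold since $\hat\lambda_n > 0$ forces the slope to be nonnegative hence positive, completes the argument.

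The main obstacle is the non-subnormality part: one must extract from the Case~4 hypotheses a concrete growth or structural property of $\hat\lambdab$ that no Stieltjes moment sequence can have. The dichotomy ``$\supp\mu\subseteq[0,1]$ gives bounded $\hat\lambda_n$; otherwise geometric growth'' is the key lever, but it needs to be stated carefully — in the borderline case where $\supp\mu$ touches $1$ but the mass near $1$ decays, one could in principle get intermediate growth rates, so I would instead argue via the second-difference identity: for a Stieltjes moment sequence, $\{\hat\lambda_n\}$ and $\{\triangle^2\hat\lambda_n\} = \{\int x^n(x-1)^2\D\mu\}$ are both PD, whereas in Case~4 one computes from \eqref{bi-pas}-type formula that $\triangle^2\hat\lambda_n = \int_{[0,1)} x^n\D\nu(x) \cdot(\text{something})$ — actually $\triangle^2\hat\lambda_n = -\int_{[0,1)}\frac{x^n(1-x)^2}{(1-x)^2}\D\nu = $ wait, from $\hat\lambda_n = 1 + n(b+\varGamma_1) - \int\frac{1-x^n}{(1-x)^2}\D\nu$ one gets $(\triangle^2\hat\lambda)_n = \int_{[0,1)} x^n \D\nu(x) > 0$, which is fine. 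The real contradiction is simpler: if $\wlam$ were subnormal then $c=0$ in its scalar representing triplet (true here) but then by the Remark after Corollary~\ref{Ber-G-W}, $\nu(\varDelta) = \int_\varDelta (x-1)^2\D\mu(x)$ and $\hat\lambda_0 = \gamma_0 = \int \frac{1}{(x-1)^2}\D\nu + (\text{mass of }\mu\text{ at }1) = \mu(\rbb_+) = 1$ forces $\mu(\{1\}) = 1 - \varGamma_2 > 0$ unless $\varGamma_2 = 1$; one then checks that the total picture (linear growth, $\mu(\{1\})>0$) combined with $b + \varGamma_1 \neq 0$ gives the needed contradiction, since $b + \varGamma_1 = \int(x-1)\D\mu + \varGamma_1$ and $\varGamma_1 = \int\frac{1}{1-x}\D\nu = \int\frac{(x-1)^2}{1-x}\D\mu = -\int(x-1)\D\mu$ (over the part where the integral converges), giving $b + \varGamma_1 = 0$ — the desired contradiction. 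So the heart of the matter is this cancellation: \textbf{subnormality in Case~4 forces $b + \varGamma_1 = 0$}, contradicting the Case~4 hypothesis.
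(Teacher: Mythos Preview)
Your compactness argument for $B$ coincides with the paper's: both invoke \eqref{tyl-tru} and the limit \eqref{Hu-hy}.

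For non-subnormality your \emph{final} argument is correct and is a genuinely different route from the paper's. Stripped of the detours it reads: if $\wlam$ were subnormal with Berger measure $\mu$, then by the Remark after Corollary~\ref{Ber-G-W} one has $b=\int_{\rbb_+}(x-1)\,\D\mu$ and $\D\nu=(x-1)^2\,\D\mu$; hence
\[
\varGamma_1=\int_{\rbb_+}\frac{1}{1-x}\,\D\nu
=\int_{\rbb_+\setminus\{1\}}\frac{(x-1)^2}{1-x}\,\D\mu
=\int_{\rbb_+}(1-x)\,\D\mu=-b,
\]
forcing $b+\varGamma_1=0$, contrary to the Case~4 hypothesis. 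No hedge ``over the part where the integral converges'' is needed: $\nu(\{1\})=0$ and $(1-x)$ is bounded on the compact set $\supp{\mu}$. The paper instead invokes the operator-level characterization \cite[Theorem~3.4.1]{Ja-Ju-St20}, which says that a CPD operator is subnormal only if $B=\int_{\rbb_+}\frac{1}{x-1}\,F(\D x)$; evaluated at $e_0$ this gives $b=\int_{\rbb_+}\frac{1}{x-1}\,\D\nu=-\varGamma_1$, the same identity. Your route stays entirely within results stated in the present paper; the paper's route imports an external theorem but applies verbatim to more general CPD operators.

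Two remarks on the writeup. First, the growth-rate dichotomy you abandoned is in fact valid and would also finish the proof: since $\mu$ is compactly supported, either $\supp{\mu}\subseteq[0,1]$ and then $\hat\lambda_n=\int x^n\,\D\mu\Le\mu(\rbb_+)$ is bounded, or $\mu([1+\varepsilon,\infty))>0$ for some $\varepsilon>0$ and then $\hat\lambda_n\Ge(1+\varepsilon)^n\mu([1+\varepsilon,\infty))$; there is no ``intermediate growth'' loophole. Meanwhile $\hat\lambda_n/n\to b+\varGamma_1\neq 0$ in Case~4 (this follows from \eqref{Eun-J} as in the computation \eqref{Hu-hy}), so $\hat\lambda_n$ is unbounded but subexponential---impossible for a Stieltjes moment sequence. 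Second, delete the stream-of-consciousness false starts (the $\triangle^2$ aside leads nowhere) and present one clean argument; as written the proof reads like a worksheet, not a proof.
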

   \begin{proof}
That $B$ is compact follows immediately from
\eqref{tyl-tru} and \eqref{Hu-hy} applied to
$\gammab=\hat\lambdab$. To see that $\wlam$ is
not subnormal, observe that by
\cite[Theorem~3.4.1]{Ja-Ju-St20}, there is no
loss of generality in assuming that
$\frac{1}{x-1} \in L^1(F)$. Then
   \begin{align*}
\is{Be_0}{e_0} = b \neq \int_{\rbb_+}
\frac{1}{x-1} \D\nu(x) =
\Big\langle\Big(\int_{\rbb_+} \frac{1}{x-1} F(\D
x)\Big) e_0, e_0\Big\rangle,
   \end{align*}
which contradicts
\cite[Theorem~3.4.1(ii-b)]{Ja-Ju-St20}.
   \end{proof}
   The case when $B$ is not compact is
considered below.
   \begin{pro} \label{Bdziadnc}
Under the assumptions and notation of
Theorem~{\em \ref{truplyt}}, suppose that
$\vartheta:=\sup\supp{\nu} > 1$,
$\nu((1,\theta))=0$ for some $\theta \in
(1,\vartheta]$ and
   \begin{align} \label{sssttt}
\int_{[0,1)} \frac{1}{(x-1)^2} \D \nu(x)
<\infty.
   \end{align}
Then $B$ is not compact.
   \end{pro}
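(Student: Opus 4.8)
The plan is to show that the diagonal terms $b_k$ of $B$ do not converge to $0$, using the explicit formula $b_k = (\hat\lambda_{k+1} - \hat\lambda_k - c)/\hat\lambda_k$ from \eqref{tyl-tru} together with the description of $\hat\lambda_k$ in terms of the representing triplet. First I would invoke Theorem~\ref{cpdws} and Lemma~\ref{szuf-eru}, or more conveniently \eqref{haha1}, to write
   \begin{align*}
b_k \hat\lambda_k = \hat\lambda_{k+1} - \hat\lambda_k - c = b_k^{(0)}\hat\lambda_k,
   \end{align*}
and then note that the sign and size of $b_k$ are governed by the behavior of the quotient $\hat\lambda_{k+1}/\hat\lambda_k = \lambda_k^2$ relative to $1$. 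The key point is that since $\vartheta > 1$ with $\nu([\theta,\vartheta]) > 0$ for the chosen $\theta \in (1,\vartheta]$, the estimate \eqref{cul-dra} gives $\int_{\rbb_+} Q_n \D\nu \Ge \theta^{n-2}\nu([\theta,\vartheta])$, so the integral term in \eqref{wnezero} dominates the polynomial part $1 + bn + cn^2$; hence $\hat\lambda_n$ grows at least like $\theta^n$ and in particular $\hat\lambda_n \to \infty$.

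The heart of the argument is to show $\limsup_{k\to\infty} b_k > 0$, equivalently that $\hat\lambda_{k+1}/\hat\lambda_k$ does not converge to $1$ from below fast enough to kill $b_k$. From \eqref{wnezero} applied at $n = k+1$ and $n = k$, using \eqref{rnx-0} to write $Q_{k+1}(x) = xQ_k(x) + k$, one gets
   \begin{align*}
\hat\lambda_{k+1} - \hat\lambda_k = b + c(2k+1) + \int_{\rbb_+}\big((x-1)Q_k(x) + k\big)\D\nu(x).
   \end{align*}
Splitting the integral over $[0,1)$ and over $[\theta,\vartheta] \subseteq (1,\infty)$ (the interval $(1,\theta)$ being null by hypothesis), on $[0,1)$ the term $(x-1)Q_k(x) = -\frac{1-x^k}{(x-1)^2}\cdot(1-x)\cdots$; more precisely $(x-1)Q_k(x) + k \to \frac{1}{1-x}$ boundedly on $[0,1)$ by \eqref{rnx-1}, and \eqref{sssttt} guarantees integrability, so this part contributes an $O(1)$ (in fact convergent) quantity plus the $+k\nu([0,1))$ piece; on $[\theta,\vartheta]$ the term $(x-1)Q_k(x) \Ge (\theta-1)\theta^{k-2}\nu([\theta,\vartheta])$ grows geometrically. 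Dividing by $\hat\lambda_k$, which grows no faster than $\|\wlam\|^{2k}$, and comparing geometric rates, the decisive observation is that both $\hat\lambda_{k+1} - \hat\lambda_k$ and $\hat\lambda_k$ are dominated by their respective integrals over $[\theta,\vartheta]$, and the ratio of these tends to something at least $\theta - 1 > 0$; hence $b_k$ is bounded below by a positive constant for large $k$, so $b_k \not\to 0$ and $B$ is not compact.

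The main obstacle will be making the domination argument rigorous, i.e. controlling the contributions from $[0,1)$ and from $(\theta,\vartheta]$ simultaneously and showing they are lower-order relative to the mass near $\vartheta$. The clean way to do this is to set $\vartheta = \sup\supp\nu$ and use that for every $\epsilon > 0$, $\nu([\vartheta-\epsilon,\vartheta]) > 0$, so $\int_{\rbb_+} Q_n\D\nu$ and $\int_{\rbb_+}(x-1)Q_n\D\nu$ both have exponential growth rate exactly $\vartheta$ (upper bounds from exponential growth of $\hat\lambdab$, lower bounds from \eqref{cul-dra}-type estimates with $\theta$ replaced by $\vartheta - \epsilon$), while the $[0,1)$ contributions are $O(k)$ by \eqref{sssttt} and \eqref{rnx-1}, and $\hat\lambda_k$ itself has growth rate $\vartheta$. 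A short computation with $Q_{k+1}(x) = xQ_k(x)+k$ then yields $\liminf_k (\hat\lambda_{k+1}-\hat\lambda_k)/\hat\lambda_k \Ge \vartheta - 1 > 0$ after noting that $\int x Q_k \D\nu / \int Q_k \D\nu \to \vartheta$ (a standard moment-ratio fact, provable by Cauchy–Schwarz or by squeezing with $\nu([\vartheta-\epsilon,\vartheta])$), so $b_k = (\hat\lambda_{k+1}-\hat\lambda_k)/\hat\lambda_k - c/\hat\lambda_k$ stays bounded away from $0$, whence $B$ is not compact.
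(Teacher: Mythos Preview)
Your proof is correct and is essentially the paper's argument: both reduce to showing $\liminf_k(\hat\lambda_{k+1}-\hat\lambda_k)/\hat\lambda_k>0$, split the integral across the gap $(1,\theta)$, bound the $[0,1)$ contribution polynomially via \eqref{sssttt}, and extract exponential growth from $[\theta,\vartheta]$ to obtain the lower bound $\theta-1$; the paper simply organizes the bookkeeping through the closed form \eqref{rnx-1} and two auxiliary remainder functions $f(k),g(k)$ rather than through the recursion \eqref{rnx-0} as you do. One small slip: on $[0,1)$ the convergence $(x-1)Q_k(x)+k=(1-x^k)/(1-x)\to 1/(1-x)$ is monotone increasing, not bounded---use monotone convergence together with $\int_{[0,1)}\frac{1}{1-x}\,\D\nu<\infty$ (which follows from \eqref{sssttt} by Cauchy--Schwarz, exactly as the paper notes) instead. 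Your third paragraph's sharper claim $\liminf\ge\vartheta-1$ via a ``standard moment-ratio fact'' is unnecessary for the result and is not quite standard for the weights $Q_k$ without invoking the gap hypothesis to keep mass away from $1^+$; your second paragraph already contains the complete proof, so you can drop the refinement.
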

   \begin{proof}
By the compactness of $C$ and \eqref{tyl-tru},
it suffices to show that
   \begin{align}  \label{lstemi}
\liminf_{k\to \infty} \frac{\hat\lambda_{k+1} -
\hat\lambda_{k}}{\hat\lambda_k} > 0.
   \end{align}
Using the Cauchy-Schwarz inequality, we infer
from \eqref{sssttt} that $\int_{[0,1)}
\frac{1}{1-x} \D \nu(x) <\infty$. Hence, we can
define the functions $f,g\colon \nbb \to \rbb$
by
   \begin{align*}
f(k)&= b+ c(2k+1) - \int_{\rbb_+} \frac{1}{x-1}
\D \nu(x) + \int_{[0,1)} \frac{x^k}{x-1}\D
\nu(x), \quad k\in \nbb,
   \\
g(k)&= 1+ bk + c k^2 - \int_{\rbb_+}
\frac{1+k(x-1)}{(x-1)^2} \D \nu(x) +
\int_{[0,1)} \frac{x^k}{(x-1)^2}\D \nu(x), \quad
k\in \nbb.
   \end{align*}
It is easily seen that there is a positive real
number $\alpha$ such that
   \begin{align} \label{hhhu1}
\text{$|f(k)| \Le \alpha k$ and $|g(k)| \Le
\alpha k^2$ for all $k\in \nbb$.}
   \end{align}
Using the assumption that $\nu((1, \theta)) =
0$, we deduce that
   \begin{align*}
\int_{(1,\infty)} \frac{x^k}{(x-1)^2} \D \nu(x)
\Ge \theta^k \int_{[\theta,\vartheta]}
\frac{1}{(x-1)^2} \D \nu(x), \quad k \in \nbb.
   \end{align*}
Since $\nu([\theta, \vartheta])$ is positive, so
is $\int_{[\theta,\vartheta]} \frac{1}{(x-1)^2}
\D \nu(x)$. Combined with \eqref{hhhu1}, this
yields
   \begin{align} \label{ssddd}
\text{$\lim_{k\to\infty}
\frac{f(k)}{\int_{(1,\infty)}
\frac{x^k}{(x-1)^2}\D \nu(x)}=0$ \; and \;
$\lim_{k\to\infty} \frac{g(k)}{\int_{(1,\infty)}
\frac{x^k}{(x-1)^2}\D \nu(x)}=0$.}
   \end{align}
Observe also that
   \begin{align} \label{hxcvw}
\frac{\int_{(1,\infty)} \frac{x^k}{x-1} \D
\nu(x)}{\int_{(1,\infty)} \frac{x^k}{(x-1)^2}\D
\nu(x)} \Ge \theta - 1, \quad k\in \nbb.
   \end{align}
It follows from \eqref{rnx-1}, \eqref{del1} and
\eqref{wnezero} that
   \allowdisplaybreaks
   \begin{align}  \notag
\frac{\hat\lambda_{k+1} -
\hat\lambda_{k}}{\hat\lambda_k} & = \frac{b +
c(2k+1) + \int_{\rbb_+} \frac{x^k-1}{x-1}\D
\nu(x)}{1 + bk + c k^2 + \int_{\rbb_+}
\frac{x^k-1-k(x-1)}{(x-1)^2} \D\nu(x)}
   \\  \notag
& = \frac{f(k) + \int_{(1,\infty)}
\frac{x^k}{x-1} \D \nu(x)}{g(k) +
\int_{(1,\infty)} \frac{x^k}{(x-1)^2} \D\nu(x)}
   \\  \label{kdnb}
& = \frac{\frac{f(k)}{\int_{(1,\infty)}
\frac{x^k}{(x-1)^2}\D \nu(x)} +
\frac{\int_{(1,\infty)} \frac{x^k}{x-1} \D
\nu(x)}{\int_{(1,\infty)} \frac{x^k}{(x-1)^2}\D
\nu(x)}}{\frac{g(k)}{\int_{(1,\infty)}
\frac{x^k}{(x-1)^2}\D \nu(x)} + 1}, \quad k\in
\nbb.
   \end{align}
Finally, we deduce \eqref{lstemi} from
\eqref{ssddd}, \eqref{hxcvw} and \eqref{kdnb}.
This completes the proof.
   \end{proof}
Now we discuss the question of compactness of
$F(\rbb_+)$.
   \begin{pro}\label{cupuc}
Under the assumptions and notation of
Theorem~{\em \ref{truplyt}}, suppose that
$\vartheta:=\sup\supp{\nu} > 1$ and $1 \notin
\supp{\nu}$. Then $F(\rbb_+)$ is not compact.
   \end{pro}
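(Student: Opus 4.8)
The plan is to read off from Theorem~\ref{truplyt} that $F(\rbb_+)$ is the diagonal operator (with respect to $\{e_k\}_{k=0}^{\infty}$) whose diagonal terms are
\[
\nu_k(\rbb_+) = \frac{1}{\hat\lambda_k}\int_{\rbb_+} x^k\,\D\nu(x), \quad k\in\zbb_+,
\]
and, since a diagonal operator is compact precisely when its diagonal terms tend to $0$ (as recalled just before Theorem~\ref{truplyt}), to deduce non-compactness of $F(\rbb_+)$ by showing that $\liminf_{k\to\infty}\nu_k(\rbb_+) > 0$.

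First I would use the hypotheses $\vartheta>1$ and $1\notin\supp{\nu}$ to fix a number $\theta\in(1,\vartheta)$ with $\nu((1,\theta))=0$: indeed $1\notin\supp{\nu}$ yields $\delta>0$ with $\nu((1,1+\delta))=0$, and one takes any $\theta\in(1,\min\{\vartheta,1+\delta\})$. Since $\vartheta=\sup\supp{\nu}\in\supp{\nu}$ and $\theta<\vartheta$, the open set $(\theta,\infty)$ meets $\supp{\nu}$, so $\nu([\theta,\vartheta])>0$ (recall $\supp{\nu}\subseteq[0,\vartheta]$). As also $\nu(\{1\})=0$, the measure $\nu$ is concentrated on $[0,1)\cup[\theta,\vartheta]$.

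Next I would bound the denominator $\hat\lambda_k = 1+bk+ck^2+\int_{\rbb_+}Q_k\,\D\nu$ (formula \eqref{wnezero}) from above. On $[0,1)$, \eqref{fur-1} gives $\int_{[0,1)}Q_k\,\D\nu \Le k^2\,\nu(\rbb_+)$; on $[\theta,\vartheta]$, since $x>1$ we have by \eqref{rnx-1} that $Q_k(x) \Le x^k/(x-1)^2 \Le (\theta-1)^{-2}x^k$, hence $\int_{[\theta,\vartheta]}Q_k\,\D\nu \Le (\theta-1)^{-2}\int_{\rbb_+}x^k\,\D\nu$; and $|1+bk+ck^2| \Le (1+|b|+c)k^2$ for $k\Ge 1$. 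Thus there is a constant $\kappa>0$ with $\hat\lambda_k \Le \kappa k^2 + (\theta-1)^{-2}\int_{\rbb_+}x^k\,\D\nu$ for all $k\Ge 1$, so that
\[
\nu_k(\rbb_+) \Ge \left(\frac{\kappa k^2}{\int_{\rbb_+}x^k\,\D\nu} + \frac{1}{(\theta-1)^2}\right)^{-1}, \quad k\Ge 1.
\]
Since $\int_{\rbb_+}x^k\,\D\nu \Ge \theta^k\,\nu([\theta,\vartheta])$ with $\theta>1$ and $\nu([\theta,\vartheta])>0$, one gets $k^2/\int_{\rbb_+}x^k\,\D\nu\to 0$, whence $\liminf_{k\to\infty}\nu_k(\rbb_+)\Ge(\theta-1)^2>0$, which finishes the argument.

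The proof is short; the only point that needs care is the split of $\hat\lambda_k$ into a part growing at most polynomially in $k$ (the terms $1+bk+ck^2$ together with the contribution of $\nu|_{[0,1)}$, controlled via \eqref{fur-1}) and a part of genuine exponential growth (the contribution of $\nu|_{[\theta,\vartheta]}$), combined with the observation that on $[\theta,\vartheta]$ the factor $(x-1)^{-2}$ appearing in $Q_k$ is bounded, so $Q_k$ and $x^k$ are comparable there. This comparison is exactly where $1\notin\supp{\nu}$ enters: were $\nu$ to have mass accumulating at $1$ from the right, $\int_{\rbb_+}Q_k\,\D\nu$ could grow much faster than $\int_{\rbb_+}x^k\,\D\nu$ and the conclusion would fail.
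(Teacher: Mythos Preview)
Your proof is correct and follows essentially the same strategy as the paper: show that the diagonal terms $\nu_k(\rbb_+)$ of $F(\rbb_+)$ are bounded away from $0$ by comparing the numerator $\int_{\rbb_+}x^k\,\D\nu$ with the denominator $\hat\lambda_k$, using that the exponential growth coming from $\nu|_{(1,\infty)}$ dominates the polynomial part and that, since $1\notin\supp{\nu}$, the factor $(x-1)^{-2}$ in $Q_k$ stays bounded on $\supp{\nu}$. The only cosmetic difference is that the paper writes $\hat\lambda_k=r(k)+\varLambda(k)$ via the identity \eqref{rnx-1} over all of $\rbb_+$ and obtains the bound $\liminf_k\nu_k(\rbb_+)\Ge\mathrm{dist}(1,\supp{\nu})^2$, whereas you split the domain into $[0,1)$ and $[\theta,\vartheta]$ and obtain $(\theta-1)^2$; the underlying idea is the same.
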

   \begin{proof}
For brevity, we define the real functions $r$
and $\varLambda$ on $\nbb$ by
   \begin{align*}
r(k) & =1+bk + c k^2 - \int_{\rbb_+} \frac{1 +k
(x-1)}{(x-1)^2}\D \nu(x), \quad k \in \nbb,
   \\
\varLambda(k) & =\int_{\rbb_+}
\frac{x^k}{(x-1)^2}\D \nu(x), \quad k\in \nbb.
   \end{align*}
Set $\theta = \frac{1+\vartheta}{2}$ and observe
that $\int_{[\theta,\vartheta]}
\frac{1}{(x-1)^2} \D\nu(x) > 0$. Since
   \begin{align*}
\varLambda(k) \Ge \theta^k
\int_{[\theta,\vartheta]} \frac{1}{(x-1)^2}
\D\nu(x), \quad k \in \nbb,
   \end{align*}
we see that
   \begin{align} \label{troj-band}
\lim_{k\to \infty} \frac{r(k)}{\varLambda(k)} =
0.
   \end{align}
It follows from \eqref{rnx-1}, \eqref{wnezero}
and \eqref{tyl-tru} that
   \begin{align*}
\nu_k(\rbb_+) & = \frac{\int_{\rbb_+} x^k \D
\nu(x)}{1+bk + c k^2 + \int_{\rbb_+} \frac{x^k-1
- k (x-1)}{(x-1)^2}\D \nu(x)}
   \\
& = \frac{\int_{\rbb_+} x^k \D \nu(x)}{r(k) +
\varLambda(k)} = \frac{\frac{\int_{\rbb_+} x^k
\D
\nu(x)}{\varLambda(k)}}{\frac{r(k)}{\varLambda(k)}
+ 1}
   \\
& \Ge
\frac{\mathrm{dist}(1,\supp{\nu})^2}{\frac{r(k)}{\varLambda(k)}
+ 1}, \quad k \in \nbb.
   \end{align*}
Combined with \eqref{troj-band}, this implies
that
   \begin{align*}
\liminf_{k\to \infty} \nu_k(\rbb_+) \Ge
\mathrm{dist}(1,\supp{\nu})^2 > 0.
   \end{align*}
By Theorem~\ref{truplyt}, $F(\rbb_+)$ is not
compact.
   \end{proof}
We now give an explicit example covering all
cases discussed above.
   \begin{exa} \label{oliun}
Fix $\theta \in (0,\infty)\backslash \{1\}$ and
define the sequence $\gammab =
\{\gamma_n\}_{n=0}^{\infty}$ of real numbers by
   \begin{align*}
\gamma_n= 1 + Q_n(\theta), \quad n \in \zbb_+.
   \end{align*}
Clearly, $\gamma_n > 0$ for all $n\in \zbb_+$
(see \eqref{klaud} and \eqref{immed-pos}),
$\gammab$ is a CPD sequence of exponential
growth and its representing triplet $(b,c,\nu)$
is given by $b=c=0$ and $\nu=\delta_{\theta}$.
By Theorem~ \ref{wkwcpdws}, the unilateral
weighted shift $\wlam$ with weights
$\lambdab=\{\lambda_n\}_{n=0}^{\infty}$ defined
by \eqref{wagua} is CPD and
$\hat\lambdab=\gammab$. Consider now two cases.

{\sc Case 1.} $\theta < 1$.

Then by Proposition~\ref{bnocr}, $B$ is compact.
Since by \eqref{rnx-1} and \eqref{tyl-tru},
   \begin{align*}
\nu_k(\rbb_+) = \frac{\theta^k}{1+Q_k(\theta)}
\to 0 \quad \text{as } k \to \infty,
   \end{align*}
we infer from Theorem~\ref{truplyt} that
$F(\rbb_+)$ is compact.

{\sc Case 2.} $\theta > 1$.

Then by Proposition~\ref{Bdziadnc}, $B$ is not
compact. In turn, by Proposition~\ref{cupuc},
$F(\rbb_+)$ is not compact.
   \hfill $\diamondsuit$
   \end{exa}
   \section{CPD backward extensions}
In this section, we solve the $1$-step backward
extension problem for CPD unilateral weight\-ed
shifts, which consists of finding a necessary
and sufficient condition for the existence of a
positive real number $t$ for which the
unilateral weighted shift $W_{(t,\lambda_0,
\lambda_1, \ldots)}$ is CPD under the assumption
that $W_{(\lambda_0, \lambda_1, \ldots)}$ is. We
generalize this procedure to the CPD $n$-step
backward extension using recurrence formulas.

We begin with the crucial lemma related to CPD
sequences.
   \begin{lem} \label{bakre}
Suppose that $\gammab =
\{\gamma_n\}_{n=0}^{\infty}$ is a CPD sequence
of exponential growth and $\supp{\nu} \subseteq
\rbb_+$, where $(b,c,\nu)$ is the representing
triplet of $\gammab$. Let $\theta \in \rbb$. Set
$\gamma_{-1}=\theta$. Then the following
conditions are~equivalent\/\footnote{In the
integral formula in (ii), we adhere to the
convention that $\frac{1}{0}=\infty$. It will be
used in the subsequent parts of this paper.}{\em
:}
   \begin{enumerate}
   \item[(i)] $\betab:=\{\gamma_{n-1}\}_{n=0}^{\infty}$
is a CPD sequence of exponential growth and
$\supp{\nu_{\betab}} \subseteq \rbb_+$, where
$(b_{\betab}, c_{\betab}, \nu_{\betab})$ is the
representing triplet of $\betab$,
   \item[(ii)] $\int_{\rbb_+} \frac{1}{x} \D
\nu(x) \Le \theta + \gamma_1 - 2(\gamma_0 + c)$.
   \end{enumerate}
Moreover, if {\em (i)} holds, then
$\nu(\{0\})=0$ and
   \begin{align} \label{brus1}
b_{\betab} & = \gamma_0 - \theta - c,
   \\ \label{brus2}
c_{\betab}& =c,
   \\ \label{brus3}
\nu_{\betab}(\varDelta) & = \int_{\varDelta}
\frac{1}{x} \D\nu(x) + \nu_{\betab}(\{0\})
\delta_0(\varDelta), \quad \varDelta \in
\borel{\rbb_+},
   \end{align}
with
   \begin{align} \label{brus4}
\nu_{\betab}(\{0\}) = \theta + \gamma_1
-2(\gamma_0 + c) - \int_{\rbb_+} \frac{1}{x}
\D\nu(x).
   \end{align}
   \end{lem}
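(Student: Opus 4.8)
The plan is to reduce everything to the shift formula of Lemma~\ref{szuf-eru}. Observe that $\betab^{(1)} = \gammab$, so if (i) holds, then $\betab$ is CPD of exponential growth with $\supp{\nu_{\betab}} \subseteq \rbb_+$, and applying Lemma~\ref{szuf-eru}(iii) to $\betab$ with $k=1$ gives immediately that the representing triplet of $\betab^{(1)} = \gammab$ is $(b_{\betab} + 2c_{\betab} + \int_{\rbb_+} x^0 \D\nu_{\betab}, c_{\betab}, x \,\D\nu_{\betab})$. Matching this against $(b,c,\nu)$ yields at once $c_{\betab} = c$ (that is \eqref{brus2}), the measure relation $\D\nu(x) = x\, \D\nu_{\betab}(x)$, and the scalar identity $b = b_{\betab} + 2c + \nu_{\betab}(\rbb_+)$. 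From $\D\nu(x) = x\,\D\nu_{\betab}(x)$ one reads off both $\nu(\{0\})=0$ and, restricting to $\varDelta \subseteq (0,\infty)$, the absolute continuity $\nu_{\betab}|_{(0,\infty)}(\varDelta) = \int_\varDelta \frac{1}{x}\D\nu(x)$; splitting off the atom at $0$ gives \eqref{brus3}. Formula \eqref{brus1} then follows by substituting $c_{\betab}=c$ into \eqref{euju} applied to $\betab$, namely $b_{\betab} = \beta_1 - \beta_0 - c = \gamma_0 - \gamma_{-1} - c = \gamma_0 - \theta - c$. For \eqref{brus4}, integrate $\D\nu(x) = x\,\D\nu_{\betab}(x)$ to get $\nu_{\betab}(\rbb_+) = \nu_{\betab}(\{0\}) + \int_{\rbb_+}\frac{1}{x}\D\nu(x)$, then plug this into $b = b_{\betab} + 2c + \nu_{\betab}(\rbb_+)$ and solve, using \eqref{brus1} and \eqref{euju} for $\gammab$ (which gives $b = \gamma_1 - \gamma_0 - c$) to eliminate $b$ and $b_{\betab}$; this produces exactly $\nu_{\betab}(\{0\}) = \theta + \gamma_1 - 2(\gamma_0+c) - \int_{\rbb_+}\frac{1}{x}\D\nu(x)$.

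For the implication (i)$\Rightarrow$(ii), the above computation of $\nu_{\betab}(\{0\})$ is forced, and since $\nu_{\betab}$ is a genuine (nonnegative) measure, $\nu_{\betab}(\{0\}) \Ge 0$ is precisely the inequality in (ii). In particular this also shows that when $\int_{\rbb_+}\frac{1}{x}\D\nu(x) = \infty$, (i) fails, consistent with the $\frac{1}{0}=\infty$ convention. For the converse (ii)$\Rightarrow$(i), I would construct the candidate triplet explicitly: set $c_{\betab}=c$, define $\nu_{\betab}$ by the right-hand side of \eqref{brus3} with the atom weight \eqref{brus4} (which is $\Ge 0$ by (ii) and finite since the integral is finite under (ii)), and set $b_{\betab} = \gamma_0 - \theta - c$. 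One checks $\nu_{\betab}(\{1\}) = 0$ (because $\nu(\{1\})=0$ and the added atom sits at $0$), that $\nu_{\betab}$ is compactly supported in $\rbb_+$ (its support is contained in $\supp{\nu}\cup\{0\}$), and that $c_{\betab}\Ge 0$. Then form the sequence $\delta_n := \gamma_0 + b_{\betab} n + c n^2 + \int_{\rbb_+} Q_n \D\nu_{\betab}$; by Theorem~\ref{cpd-expon} this is a CPD sequence of exponential growth with representing triplet $(b_{\betab},c_{\betab},\nu_{\betab})$. It remains to verify $\delta_n = \gamma_{n-1}$ for all $n\in\zbb_+$, i.e. that $\betab = \{\delta_n\}$; since $\supp{\nu_{\betab}}\subseteq\rbb_+$ and the shift index $k=1$ is odd, Lemma~\ref{szuf-eru}(ii) then guarantees $\betab^{(1)} = \gammab$ is CPD — but that is given, so the real content is the bookkeeping identity $\delta_n = \gamma_{n-1}$.

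To verify $\delta_n = \gamma_{n-1}$ I would use the recurrence \eqref{rnx-0}, $Q_{n}(x) = x Q_{n-1}(x) + (n-1)$, to compute $\int_{\rbb_+} Q_n \D\nu_{\betab}$. Writing $\D\nu_{\betab} = \frac1x \D\nu + \nu_{\betab}(\{0\})\delta_0$, the atom contributes $Q_n(0)\,\nu_{\betab}(\{0\}) = (n-1)\,\nu_{\betab}(\{0\})$ for $n\Ge 2$ (and $0$ for $n\le 1$), while $\int_{\rbb_+} \frac{Q_n(x)}{x}\D\nu(x) = \int_{\rbb_+} Q_{n-1}(x)\D\nu(x) + (n-1)\int_{\rbb_+}\frac1x\D\nu(x)$. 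Assembling these with the polynomial parts and substituting the values of $b_{\betab}$, $c_{\betab}$, $\nu_{\betab}(\{0\})$, after cancellation one should land on $\gamma_0 + b(n-1) + c(n-1)^2 + \int_{\rbb_+} Q_{n-1}\D\nu = \gamma_{n-1}$; the cases $n=0$ ($\delta_0 = \gamma_0 = \gamma_{-1}+(\gamma_0-\gamma_{-1})$, check against $b_{\betab}$) and $n=1$ ($\delta_1 = \gamma_0$) are handled separately and are where the precise choice of $b_{\betab}$ gets pinned down. I expect this last algebraic verification — keeping track of the $n$ versus $n-1$ shifts and the boundary cases $n=0,1$ — to be the only genuinely fiddly part; everything else is a direct application of Lemma~\ref{szuf-eru} and Theorem~\ref{cpd-expon}.
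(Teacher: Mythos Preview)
Your plan is essentially the paper's own proof: both directions hinge on the identity $\betab^{(1)}=\gammab$ together with Lemma~\ref{szuf-eru}, and in (ii)$\Rightarrow$(i) both you and the paper construct the very same candidate triplet $(b_{\betab},c_{\betab},\nu_{\betab})$ and then argue that the associated CPD sequence has first shift equal to $\gammab$.

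Two remarks. First, there is a slip in your definition of $\delta_n$: since $\beta_0=\gamma_{-1}=\theta$, the formula coming from Theorem~\ref{cpd-expon} must read
\[
\delta_n \;=\; \theta + b_{\betab}\, n + c\, n^2 + \int_{\rbb_+} Q_n \,\D\nu_{\betab},
\]
not with $\gamma_0$ in front. With $\gamma_0$ there you get $\delta_0=\gamma_0\neq\theta$ and $\delta_1=2\gamma_0-\theta\neq\gamma_0$, which is exactly why your parenthetical about the $n=0$ case comes out garbled. Once you replace $\gamma_0$ by $\theta$, the boundary cases $n=0,1$ are immediate and your recurrence computation via \eqref{rnx-0} for $n\Ge 2$ goes through cleanly.

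Second, the paper avoids your ``fiddly'' direct verification altogether: rather than expanding $\int Q_n\,\D\nu_{\betab}$ by hand, it applies Lemma~\ref{szuf-eru} \emph{again}, now to the constructed sequence $\tilde\betab$, reads off that the representing triplet of $\tilde\betab^{(1)}$ is exactly $(b,c,\nu)$ (equations \eqref{comychcu}--\eqref{comychcx}), checks $\tilde\beta_1=\gamma_0$, and concludes $\tilde\betab^{(1)}=\gammab$ by the uniqueness part of Theorem~\ref{cpd-expon}. This is shorter and sidesteps the case analysis on $n$; your route works too but is more laborious.
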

   \begin{proof}
(i)$\Rightarrow$(ii) Since $\beta_n=
\gamma_{n-1}$ for $n\in \zbb_+$, we get $\gammab
= \betab^{(1)}$ (see \eqref{bekon} for the
definition). Clearly, by \eqref{euju} and Lemma~
\ref{szuf-eru} applied to $\betab$ in place of
$\gammab$ with $k=1$, we deduce that
\eqref{brus1} and \eqref{brus2} hold and
   \begin{align} \label{kriwan}
\nu(\varDelta) = \int_{\varDelta} x \D
\nu_{\betab}(x), \quad \varDelta \in
\borel{\rbb_+}.
   \end{align}
This implies that $\nu(\{0\})=0$, and
consequently
   \begin{align*}
\nu_{\betab}(\varDelta) &
\overset{\eqref{kriwan}}= \int_{\varDelta
\backslash \{0\}} \frac{1}{x} \D\nu(x) +
\nu_{\betab}(\{0\}) \delta_0 (\varDelta), \quad
\varDelta \in \borel{\rbb_+},
   \end{align*}
which implies \eqref{brus3}. We now compute
$\nu_{\betab}(\{0\})$. It follows from
\eqref{cdr4}, \eqref{brus1}, \eqref{brus2} and
the identity $Q_2\equiv 1$ that
   \begin{align*}
\gamma_1 = \beta_2 = \beta_0 + 2b_{\betab} + 4
c_{\betab} + \nu_{\betab}(\rbb_+) = \theta + 2
(\gamma_0 - \theta - c) + 4 c +
\nu_{\betab}(\rbb_+).
   \end{align*}
This yields
   \begin{align*}
\nu_{\betab}(\rbb_+) = \theta + \gamma_1 - 2
(\gamma_0 + c).
   \end{align*}
Substituting $\varDelta=\rbb_+$ into
\eqref{brus3} and using $\nu(\{0\})=0$, we
conclude that (ii) and \eqref{brus4} hold. This
also proves the ``moreover'' part.

(ii)$\Rightarrow$(i) Set $\tilde b = \gamma_0 -
\theta - c$, $\tilde c = c$ and
   \begin{align*}
\tilde \nu(\varDelta) & = \int_{\varDelta}
\frac{1}{x} \D\nu(x) + \Big(\theta + \gamma_1
-2(\gamma_0 + c) - \int_{\rbb_+} \frac{1}{x}
\D\nu(x)\Big) \delta_0(\varDelta), \quad
\varDelta \in \borel{\rbb_+}.
   \end{align*}
In particular, we have
   \begin{align} \label{nutylpa}
\tilde \nu(\rbb_+) & = \theta + \gamma_1
-2(\gamma_0 + c), \quad \varDelta \in
\borel{\rbb_+}.
   \end{align}
Clearly, $\tilde b \in \rbb$, $\tilde c \in
\rbb_+$ and by (ii), $\tilde \nu$ is a compactly
supported finite Borel measure on $\rbb_+$ such
that $\tilde \nu(\{1\})=0$. Define the sequence
$\tilde \betab=\{\tilde
\beta_n\}_{n=0}^{\infty}$ by
   \begin{align} \label{tidufor}
\tilde \beta_n = \theta + \tilde bn + \tilde c
n^2 + \int_{\rbb_+} Q_n(x) \D\tilde \nu(x),
\quad n\in \zbb_+.
   \end{align}
By Theorem~\ref{cpd-expon}, $\tilde \betab$ is a
CPD sequence of exponential growth. To complete
the proof, it is enough to show that
   \begin{align} \label{urwa}
\tilde \betab^{(1)} = \gammab.
   \end{align}
For, applying Lemma~\ref{szuf-eru} to $\tilde
\betab$ in place of $\gammab$ with $k=1$, we
deduce that $\tilde \betab^{(1)}$ is a CPD
sequence of exponential growth and
   \begin{align} \notag
\tilde b_1 = (\tilde \beta_2- \tilde \beta_1) -c
& \overset{\eqref{tidufor}}= \big(\tilde b + 3 c
+ \tilde \nu (\rbb_+)\big) - c
   \\ \notag
& \overset{\eqref{nutylpa}}= (\gamma_0 - \theta
- c) + 3c + (\theta + \gamma_1 -2(\gamma_0 + c))
- c
   \\ \label{comychcu}
& \hspace{1ex} = \gamma_1-\gamma_0 - c
\overset{\eqref{euju}}= b.
   \end{align}
Again, using Lemma~\ref{szuf-eru} in the same
context leads to
   \begin{align} \label{comychcr}
\tilde c_1 = \tilde c = c,
   \end{align}
and
   \begin{align} \label{comychcx}
\tilde \nu_1(\varDelta) = \int_{\varDelta} x \,
\D \tilde \nu(x) = \nu(\varDelta), \quad
\varDelta \in \borel{\rbb_+}.
   \end{align}
Finally, we have
   \begin{align*}
\tilde \beta^{(1)}_0 = \tilde \beta_1
\overset{\eqref{tidufor}} = \theta + \tilde b +
\tilde c = \gamma_0.
   \end{align*}
Combined with \eqref{comychcu}, \eqref{comychcr}
and \eqref{comychcx}, this implies \eqref{urwa},
which completes the proof.
   \end{proof}
   We now solve the $1$-step backward extension
problem for CPD unilateral weight\-ed shifts.
   \begin{thm} \label{bireks}
Suppose that $\wlam$ is a CPD unilateral
weighted shift with weights
$\lambdab=\{\lambda_n\}_{n=0}^{\infty}$ and
$(b,c,\nu)$ is its scalar representing triplet.
Let $t \in (0,\infty)$. Then the following
conditions are~equivalent{\em :}
   \begin{enumerate}
   \item[(i)] $W_{(t,\lambda_0, \lambda_1,
\ldots)}$ is CPD,
   \item[(ii)] $\frac{1}{t^2} \Ge \int_{\rbb_+}
\frac{1}{x} \D \nu(x) + 1 + c - b$.
   \end{enumerate}
Moreover, if $W_{(t,\lambda_0, \lambda_1,
\ldots)}$ is CPD and $(b_t,c_t,\nu_t)$ is its
scalar representing triplet, then $\nu(\{0\})=0$
and
   \begin{align} \label{trut1}
b_t & = t^2(1 - c) - 1,
   \\ \label{trut2}
c_t & =t^2 c,
   \\ \label{trut3}
\nu_t(\varDelta) & = t^2 \int_{\varDelta}
\frac{1}{x} \D\nu(x) + \nu_t(\{0\})
\delta_0(\varDelta), \quad \varDelta \in
\borel{\rbb_+},
   \end{align}
with
   \begin{align} \label{trut4}
\nu_t(\{0\}) = 1 - t^2\Big(\int_{\rbb_+}
\frac{1}{x} \D\nu(x) + 1 + c - b\Big).
   \end{align}
   \end{thm}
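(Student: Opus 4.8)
The plan is to reduce the whole statement to Lemma~\ref{bakre}, applied to the sequence $\hat\lambdab$. Write $\mu=(t,\lambda_0,\lambda_1,\ldots)$ and let $\hat\mu=\{\hat\mu_n\}_{n=0}^{\infty}$ be the sequence associated with $W_\mu$ via \eqref{mur-hupy}. First I would note that $W_\mu$ is a bounded unilateral weighted shift (since $t$ is fixed and $\lambdab$ is bounded), so $\hat\mu$ is of exponential growth, has positive terms, and satisfies $\hat\mu_0=1$; and that \eqref{mur-hupy} gives directly $\hat\mu_n=t^2\hat\lambda_{n-1}$ for $n\in\nbb$. This is exactly the situation handled by Lemma~\ref{bakre}: setting $\gammab:=\hat\lambdab$, $\theta:=1/t^2$, $\gamma_{-1}:=\theta$ and $\betab:=\{\gamma_{n-1}\}_{n=0}^{\infty}$, one gets $\hat\mu_n=t^2\beta_n$ for all $n\in\zbb_+$ (the case $n=0$ being $t^2\theta=1$). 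Since $\wlam$ is CPD, Theorem~\ref{cpdws} gives $\supp{\nu}\subseteq\rbb_+$, so the standing hypotheses of Lemma~\ref{bakre} are in force.

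Next I would record the elementary scaling fact that if $\delta$ is a CPD sequence of exponential growth with representing triplet $(b_\delta,c_\delta,\nu_\delta)$ and $s\in(0,\infty)$, then $s\delta$ is CPD of exponential growth with representing triplet $(sb_\delta,sc_\delta,s\nu_\delta)$; this follows at once from \eqref{cdr4} and the uniqueness assertion of Theorem~\ref{cpd-expon}, and in particular $\supp{\nu_{s\delta}}=\supp{\nu_\delta}$. With this in hand, (i)$\Leftrightarrow$(ii) proceeds as a chain. By Theorem~\ref{cpdws} applied to the weights $(t,\lambda_0,\lambda_1,\ldots)$ together with Theorem~\ref{cpd-expon}, $W_\mu$ is CPD iff $\hat\mu$ is a CPD sequence of exponential growth whose representing measure is supported in $\rbb_+$ (here one uses $\hat\mu_0=1$). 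By the scaling fact with $s=t^2$ and the relation $\hat\mu=t^2\betab$, this is equivalent to $\betab$ being a CPD sequence of exponential growth with $\supp{\nu_\betab}\subseteq\rbb_+$, i.e.\ to condition (i) of Lemma~\ref{bakre}; and by that lemma this is in turn equivalent to $\int_{\rbb_+}\frac{1}{x}\D\nu(x)\Le\theta+\gamma_1-2(\gamma_0+c)$. Finally I would substitute $\gamma_0=\hat\lambda_0=1$, $\gamma_1=\hat\lambda_1=b+1+c$ (the latter from \eqref{euju} applied to $\hat\lambdab$) and $\theta=1/t^2$, which rearranges the last inequality into exactly (ii).

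For the ``moreover'' part I would push the conclusion of Lemma~\ref{bakre} through the same scaling. Under (i), the ``moreover'' clause of Lemma~\ref{bakre} yields $\nu(\{0\})=0$ and the representing triplet $(b_\betab,c_\betab,\nu_\betab)$ of $\betab$ via \eqref{brus1}--\eqref{brus4} with $\theta=1/t^2$, $\gamma_0=1$, $\gamma_1=b+1+c$. Since by Theorem~\ref{cpdws} the scalar representing triplet $(b_t,c_t,\nu_t)$ of $W_\mu$ is precisely the representing triplet of $\hat\mu=t^2\betab$, the scaling fact gives $b_t=t^2b_\betab$, $c_t=t^2c_\betab$, $\nu_t=t^2\nu_\betab$; substituting \eqref{brus1}--\eqref{brus4} then yields \eqref{trut1}--\eqref{trut4}.

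I do not anticipate a real difficulty: all the analytic content is already contained in Lemma~\ref{bakre}, and what remains is the translation between the ``weighted shift'' picture and the ``sequence'' picture. The one point that needs care is keeping the index shift $n\mapsto n-1$ and the scaling by $t^2$ compatible with the normalization $\hat\mu_0=1$ --- this is exactly what forces the choice $\theta=1/t^2$ --- and using $\gamma_1=\hat\lambda_1=b+1+c$ rather than $\gamma_1=\lambda_0^2$ so that (ii) emerges in terms of $b$ and $c$ as stated.
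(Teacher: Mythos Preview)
Your proposal is correct and follows essentially the same approach as the paper: both reduce the statement to Lemma~\ref{bakre} via Theorem~\ref{cpdws} and the obvious relation between $\hat\mu$ and $\hat\lambdab$. The only cosmetic difference is the order of operations: the paper first scales, setting $\gammab=t^2\hat\lambdab$ and $\theta=1$ (so that $\betab=\hat\mu$ directly), whereas you set $\gammab=\hat\lambdab$ and $\theta=1/t^2$ and scale afterwards; the two choices are interchangeable and lead to the same computations.
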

   \begin{proof}
Assume that (i) holds. Note that $t^2
\hat\lambdab = (t^2, t^2\lambda_0^2, t^2
\lambda_0^2 \lambda_1^2, \ldots)$ is a CPD
sequence of exponential growth with the
representing triplet $(t^2 b, t^2 c, t^2 \nu)$.
It follows from Theorem~\ref{cpdws} applied to
$W_{(t,\lambda_0, \lambda_1, \ldots)}$ that the
sequence $(1, t^2, t^2\lambda_0^2, t^2
\lambda_0^2 \lambda_1^2, \ldots)$ is CPD and
$\supp{\nu_t} \subseteq \rbb_+$. Applying
Lemma~\ref{bakre} to $\gammab = (t^2,
t^2\lambda_0^2, t^2 \lambda_0^2 \lambda_1^2,
\ldots)$ and $\theta = 1$, we conclude that (ii)
and \eqref{trut1}-\eqref{trut4} are valid.
Reversing the above reasoning shows that (ii)
implies (i), which completes the proof.
   \end{proof}
Given $n \in \nbb$ and a unilateral weighted
shift $\wlam$ with weights
$\lambdab=\{\lambda_k\}_{k=0}^{\infty}$, we say
that $\wlam$ has a {\em CPD $n$-step backward
extension} if there exists a sequence
$\{t_j\}_{j=1}^n$ of positive real numbers such
that the unilateral weighted shift
$W_{(t_n,\ldots,t_1, \lambda_0, \lambda_1,
\ldots)}$ is CPD. We call $W_{(t_n,\ldots,t_1,
\lambda_0, \lambda_1, \ldots)}$ a {\em CPD
$n$-step backward extension} of $\wlam$. In case
$\wlam$ has a CPD $k$-step backward extension
for every $k\in \nbb$, we say that $\wlam$ has a
{\em CPD $\infty$-step backward extension}.
Since the restriction of a CPD operator to its
closed invariant subspace is CPD and
$W_{(\lambda_1,\lambda_2, \ldots)}$ is unitarily
equivalent to $\wlam|_{\overline{\ob{\wlam}}}$,
we deuce that $W_{(\lambda_1,\lambda_2,
\ldots)}$ is CPD whenever $\wlam$ is. By
induction, we~have
   \begin{align} \label{wtwpi}
   \begin{minipage}{72ex}
{\em if $W_{(t_{n}, \ldots, t_1, \lambda_0,
\lambda_1, \ldots)}$ is CPD, then $\wlam$ is CPD
and $W_{(t_{k}, \ldots, t_1, \lambda_0,
\lambda_1, \ldots)}$ is a CPD $k$-step backward
extension of $\wlam$ for $1 \Le k\Le n$.}
   \end{minipage}
   \end{align}

The solution of the $n$-step backward extension
problem for CPD unilateral weighted shifts takes
the following form.
   \begin{thm} \label{nthbuk}
Suppose that $\wlam$ is a CPD unilateral
weighted shift with weights
$\lambdab=\{\lambda_k\}_{k=0}^{\infty}$ and
$(b,c,\nu)$ is its scalar representing triplet.
Let $n\Ge 2$ be an integer and let
$\{t_j\}_{j=1}^n\subseteq (0,\infty)$. Set
$t_0:=\lambda_0= (1 + b + c)^{1/2}$ and
$\boldsymbol t_k := (t_k,\ldots,t_1)$ for $k =
1, \ldots, n$. Then the following statements are
equivalent{\em :}
   \begin{enumerate}
   \item[(i)] $W_{(t_n,\ldots,t_1,\lambda_0, \lambda_1,\ldots)}$
is a CPD $n$-step backward extension of $\wlam$,
   \item[(ii)] the sequence $\{t_j\}_{j=1}^n$
satisfies the following
conditions\/\footnote{with the convention that
$\prod_{j=1}^{0} t_j^2=1$}{\em :}
   \begin{align*}
\frac{1}{t_k^2} & = \Big(\prod_{j=1}^{k-1} t_j^2
\Big) \Big(\int_{\rbb_+} \frac{1}{x^{k}} \D
\nu(x) + 2 c\Big) + 2 - t_{k-1}^2, \quad k=1,
\ldots, n-1,
   \\
\frac{1}{t_n^2} & \Ge \Big(\prod_{j=1}^{n-1}
t_j^2\Big) \Big(\int_{\rbb_+} \frac{1}{x^{n}} \D
\nu(x) + 2 c\Big) + 2 - t_{n-1}^2.
   \end{align*}
   \end{enumerate}
Moreover, if {\em (ii)} holds, then for $k=1,
\ldots, n$, $W_{(t_k,\ldots,t_1, \lambda_0,
\lambda_1, \ldots)}$ is a CPD $k$-step backward
extension of $\wlam$ with the scalar
representing triplet $(b_{\boldsymbol t_k},
c_{\boldsymbol t_k}, \nu_{\boldsymbol t_k})$
given~by{\em :}
   \begin{align*}
b_{\boldsymbol t_k} & = t_k^2 - c \prod_{j=1}^k
t_j^2 - 1,
   \\
c_{\boldsymbol t_k} & = c \prod_{j=1}^k t_j^2,
   \\
\nu_{\boldsymbol t_k}(\varDelta) & =
\Big(\prod_{j=1}^k t_j^2\Big) \int_{\varDelta}
\frac{1}{x^k} \D \nu(x) + \nu_{\boldsymbol
t_k}(\{0\}) \delta_0(\varDelta), \quad \varDelta
\in \borel{\rbb_+},
   \\
\nu_{\boldsymbol t_k}(\{0\}) & = 1 -
\Big(\prod_{j=1}^{k} t_j^2\Big)
\Big(\int_{\rbb_+} \frac{1}{x^{k}} \D \nu(x) + 2
c\Big) + t_k^2(t_{k-1}^2-2).
   \end{align*}
In particular, $\nu_{\boldsymbol t_k}(\{0\})=0$
for $k=1, \ldots, n-1$.
   \end{thm}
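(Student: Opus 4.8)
The plan is to run an induction on $m\in\{1,\dots,n\}$ that iterates the one-step backward extension result Theorem~\ref{bireks}, feeding the scalar representing triplet produced at one stage back into Theorem~\ref{bireks} at the next. For $m=1$ the assertion to be proved is exactly Theorem~\ref{bireks} applied to $\wlam$; here I would first record that $t_0^2=\lambda_0^2=\hat\lambda_1=1+b+c$ (because $Q_1\equiv0$), which is why $t_0=(1+b+c)^{1/2}$ is forced, and that with the convention $\prod_{j=1}^0 t_j^2=1$ the $k=1$ inequality in (ii) reads $\frac{1}{t_1^2}\Ge\int_{\rbb_+}\frac1x\D\nu+1+c-b$, i.e.\ it coincides with Theorem~\ref{bireks}(ii). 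The inductive statement $P(m)$ I would carry is: \emph{$W_{(t_m,\dots,t_1,\lambda_0,\lambda_1,\dots)}$ is CPD if and only if the first $m-1$ displayed conditions of {\em (ii)} hold with equality and the $m$-th holds with $\Ge$; moreover, when this is the case, its scalar representing triplet is given by the displayed formulas with $k=m$.}

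For the step $P(m-1)\Rightarrow P(m)$ I would apply Theorem~\ref{bireks} with base shift $W_{(t_{m-1},\dots,t_1,\lambda_0,\lambda_1,\dots)}$ (known from $P(m-1)$ to be a bounded CPD weighted shift, with triplet $(b_{\boldsymbol t_{m-1}},c_{\boldsymbol t_{m-1}},\nu_{\boldsymbol t_{m-1}})$) and $t=t_m$. Using $\nu(\{0\})=0$ and the inductive identity $\D\nu_{\boldsymbol t_{m-1}}(x)=\big(\prod_{j=1}^{m-1}t_j^2\big)x^{-(m-1)}\D\nu(x)$ on $(0,\infty)$ together with $\nu_{\boldsymbol t_{m-1}}(\{0\})=0$, a short computation gives
\[
\int_{\rbb_+}\frac{1}{x}\,\D\nu_{\boldsymbol t_{m-1}}(x)+1+c_{\boldsymbol t_{m-1}}-b_{\boldsymbol t_{m-1}}
=\Big(\prod_{j=1}^{m-1}t_j^2\Big)\Big(\int_{\rbb_+}\frac{1}{x^{m}}\,\D\nu(x)+2c\Big)+2-t_{m-1}^2,
\]
so Theorem~\ref{bireks}(ii) for this one-step extension is precisely the $m$-th condition of (ii). Substituting the formulas \eqref{trut1}--\eqref{trut4} of Theorem~\ref{bireks} into the inductive expressions for $(b_{\boldsymbol t_{m-1}},c_{\boldsymbol t_{m-1}},\nu_{\boldsymbol t_{m-1}})$ then yields, after elementary algebra, the claimed formulas for $(b_{\boldsymbol t_m},c_{\boldsymbol t_m},\nu_{\boldsymbol t_m})$; in particular $\nu_{\boldsymbol t_m}(\{0\})$ equals the quantity in \eqref{trut4}, which vanishes exactly when the $m$-th condition holds with equality.

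It remains to assemble the two implications. For the ``only if'' half of $P(m)$: if $W_{(t_m,\dots)}$ is CPD, then by \eqref{wtwpi} so is $W_{(t_{m-1},\dots)}$, whence $P(m-1)$ supplies its triplet; the ``only if'' half of Theorem~\ref{bireks} then gives the $m$-th condition with $\Ge$, while the ``moreover'' part of Theorem~\ref{bireks} gives $\nu_{\boldsymbol t_{m-1}}(\{0\})=0$, which forces the $(m-1)$-th condition to be an equality. Iterating this downward upgrades all of the first $m-1$ conditions to equalities and, at the bottom step, yields $\nu(\{0\})=0$. For the ``if'' half: the equalities at indices $1,\dots,m-1$ propagate $\nu_{\boldsymbol t_k}(\{0\})=0$ and allow the ``if'' half of Theorem~\ref{bireks} to be applied at each level, the final $\Ge$ supplying the last step; all intermediate triplets are produced along the way, which also establishes the ``moreover'' part of Theorem~\ref{nthbuk}, including $\nu_{\boldsymbol t_k}(\{0\})=0$ for $1\le k\le n-1$. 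The degenerate case $\nu(\{0\})>0$ is covered by the convention $\tfrac{1}{0}=\infty$: then the $k=1$ condition of (ii) fails and, by Theorem~\ref{bireks}, $\wlam$ has no CPD $1$-step backward extension, so (i) and (ii) are simultaneously false.

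I expect the only real effort to be the bookkeeping in the induction step — carrying the product $\prod_{j=1}^{k}t_j^2$, tracking the shift of exponent from $x^{-(k-1)}$ to $x^{-k}$ in the densities, and handling the dichotomy ``$(k-1)$-th condition an equality'' versus ``$\nu_{\boldsymbol t_{k-1}}(\{0\})>0$'' — rather than any conceptual difficulty, since all the substance is already in Theorem~\ref{bireks} and Lemma~\ref{bakre}. A routine point to verify at each step is that the hypotheses of Theorem~\ref{bireks} persist: $W_{(t_k,\dots,t_1,\lambda_0,\lambda_1,\dots)}$ stays bounded (only finitely many fixed positive weights are prepended) and its representing measure stays supported in $\rbb_+$ with no mass at $\{1\}$, being $x^{-k}\D\nu$ off $0$ plus an atom at $0$ while $\nu(\{1\})=0$.
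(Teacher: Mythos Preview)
Your proposal is correct and follows exactly the approach the paper indicates: the paper's proof consists of the single sentence ``This can be proved by applying induction on $n$ by using \eqref{wtwpi} and Theorem~\ref{bireks}. We leave the details to the reader.'' You have supplied precisely those details, with the correct bookkeeping for the triplets and the equality/inequality dichotomy at each level.
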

   \begin{proof}
This can be proved by applying induction on $n$
by using \eqref{wtwpi} and Theorem~\ref{bireks}.
We leave the details to the reader.
   \end{proof}
   \begin{rem} \label{pours}
As for the integrals $\int_{\rbb_+}
\frac{1}{x^k} \D \nu(x)$ that appear in
Theorem~\ref{nthbuk}, it is worth noting that
   \begin{align*}
\textit{if $\int_{\rbb_+} \frac{1}{x^{k+1}} \D
\nu(x) < \infty$ for some $k \in \nbb$, then
$\int_{\rbb_+} \frac{1}{x^{j}} \D \nu(x) <
\infty$ for $j=1, \ldots, k$.}
   \end{align*}
This is because $\nu$ is finite and compactly
supported.
   \hfill $\diamondsuit$
   \end{rem}
We now give a handy criterion for the existence
of a CPD $n$-step backward extension. It will be
used in Example~\ref{przyktwofor}.
   \begin{pro}\label{wlr1}
Suppose that $p\in \nbb$. Let $\wlam$ be a CPD
unilateral weighted shift with weights $\lambdab
= \{\lambda_k\}_{k=0}^{\infty}$ and $(b,c,\nu)$
be its scalar representing triplet such~that
   \begin{align} \label{wireles}
\int_{\rbb_+} \frac{1}{x^k} \D \nu(x) < \infty,
\quad k= 1, \ldots, p.
   \end{align}
Let $\{\sigma_k\}_{k=1}^{\infty}$ be the
sequence defined by the following
formal\/\footnote{Note that $\sigma_j$ may
vanish or be infinite for some $j$.} recurrence
relations{\em :}
   \begin{align} \label{urure}
\sigma_1 & = \int_{\rbb_+} \frac{1}{x} \D \nu(x)
+ 1 + c - b,
   \\ \label{ururl}
\sigma_{k+1} & = \frac{1}{\prod_{j=1}^{k}
\sigma_j} \Big(\int_{\rbb_+} \frac{1}{x^{k+1}}
\D \nu(x) + 2 c\Big) + 2 - \frac{1}{\sigma_k},
\quad k\in \nbb.
   \end{align}
If $\sigma_k > 0$ for $k=1, \ldots, p-1$
$($which can be dropped if $p= 1$\/$)$ and
$\sigma_p \Ge 1$, then the following statements
hold with\footnote{Observe that $n_{\lambdab}$
can be equal to $+\infty$ (see
Example~\ref{przyktwofor}).}
$n_{\lambdab}=\sup\big\{k \in \nbb \colon
\int_{\rbb_+} \frac{1}{x^k} \D \nu(x) <
\infty\big\}${\em :}
   \begin{enumerate}
   \item[(i)]
$\sigma_k \in (0,\infty)$ for $k=1, \ldots,
p-1$,
   \item[(ii)]
$\sigma_k \in [1,\infty)$ for every integer $k$
such that $p \Le k \Le n_{\lambdab}$,
   \item[(iii)] $\wlam$ has a CPD $n_{\lambdab}$-step backward
extension.
   \end{enumerate}
   \end{pro}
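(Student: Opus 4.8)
The plan is to prove (i) and (ii) by a single induction on $k$ driven by the recurrence \eqref{urure}--\eqref{ururl}, and then to obtain (iii) by reading Theorems~\ref{bireks} and \ref{nthbuk} through the substitution $\sigma_k = 1/t_k^2$. Two preliminary facts will be used throughout: since $p\in\nbb$, \eqref{wireles} gives $\int_{\rbb_+}\frac{1}{x}\D\nu<\infty$, hence $n_{\lambdab}\Ge p\Ge 1$; and by Remark~\ref{pours}, $\int_{\rbb_+}\frac{1}{x^{k}}\D\nu<\infty$ for every integer $k$ with $1\Le k\Le n_{\lambdab}$. I would then show by induction that $\sigma_1,\ldots,\sigma_p$ are finite: $\sigma_1=\int_{\rbb_+}\frac{1}{x}\D\nu+1+c-b$ is a finite real by \eqref{wireles}, and if $1\Le k\Le p-1$ and $\sigma_1,\ldots,\sigma_k$ are finite, then by the hypothesis $\sigma_j>0$ for $j\Le p-1$ they lie in $(0,\infty)$, so $\prod_{j=1}^{k}\sigma_j\in(0,\infty)$; since also $\int_{\rbb_+}\frac{1}{x^{k+1}}\D\nu<\infty$ by \eqref{wireles} and $c\in\rbb_+$, formula \eqref{ururl} makes $\sigma_{k+1}$ finite. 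Combined with the hypothesis this yields (i), and it also shows $\sigma_p\in[1,\infty)$ because $\sigma_p\Ge 1$.

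For (ii) I would induct from $k=p$ upwards, the base case $\sigma_p\in[1,\infty)$ being the statement just proved. For the inductive step, suppose $p\Le k$, $k+1\Le n_{\lambdab}$, and $\sigma_p,\ldots,\sigma_k\in[1,\infty)$. By (i), $\sigma_j\in(0,\infty)$ for $j=1,\ldots,p-1$, so $\prod_{j=1}^{k}\sigma_j\in(0,\infty)$; since $\int_{\rbb_+}\frac{1}{x^{k+1}}\D\nu<\infty$ (as $k+1\Le n_{\lambdab}$) and $c\Ge 0$, the first summand on the right of \eqref{ururl} lies in $[0,\infty)$; and $\frac{1}{\sigma_k}\in(0,1]$ because $\sigma_k\Ge 1$. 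Hence $\sigma_{k+1}\Ge 0+2-1=1$ and $\sigma_{k+1}<\infty$, so $\sigma_{k+1}\in[1,\infty)$, which completes the induction and proves (ii).

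For (iii): by (i) and (ii), every $\sigma_k$ with $1\Le k\Le n_{\lambdab}$ (every $\sigma_k$, $k\in\nbb$, when $n_{\lambdab}=\infty$) lies in $(0,\infty)$, so I would set $t_k:=\sigma_k^{-1/2}\in(0,\infty)$. Recalling from Theorem~\ref{nthbuk} that $t_0:=\lambda_0$ with $t_0^2=1+b+c$, a direct computation shows that $\sigma_k=1/t_k^2$ turns \eqref{urure} and \eqref{ururl} into the family of identities $\frac{1}{t_k^2}=\big(\prod_{j=1}^{k-1}t_j^2\big)\big(\int_{\rbb_+}\frac{1}{x^{k}}\D\nu+2c\big)+2-t_{k-1}^2$, $k\in\nbb$ (with \eqref{urure} supplying $k=1$, via $t_0^2=1+b+c$, and \eqref{ururl} supplying $k\Ge 2$), which are exactly the equalities in Theorem~\ref{nthbuk}(ii). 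Now fix an integer $n$ with $1\Le n\Le n_{\lambdab}$. If $n=1$, then $\frac{1}{t_1^2}=\sigma_1=\int_{\rbb_+}\frac{1}{x}\D\nu+1+c-b$, so $W_{(t_1,\lambda_0,\lambda_1,\ldots)}$ is CPD by Theorem~\ref{bireks}. If $n\Ge 2$, then $\{t_j\}_{j=1}^{n}$ satisfies the first $n-1$ equalities in Theorem~\ref{nthbuk}(ii), while the last condition there holds with equality since $\frac{1}{t_n^2}=\sigma_n$ coincides with its right-hand side by \eqref{ururl} (here $\int_{\rbb_+}\frac{1}{x^n}\D\nu<\infty$ because $n\Le n_{\lambdab}$); hence $W_{(t_n,\ldots,t_1,\lambda_0,\lambda_1,\ldots)}$ is a CPD $n$-step backward extension of $\wlam$. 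Taking $n=n_{\lambdab}$ when $n_{\lambdab}<\infty$, and letting $n$ range over all of $\nbb$ when $n_{\lambdab}=\infty$, gives (iii). I expect the only delicate part to be this last paragraph: getting the $k=1$ term right (the term $t_0^2=1+b+c$, not $1$, enters) and keeping track of which integrals $\int_{\rbb_+}\frac{1}{x^{k}}\D\nu$ are finite, which is precisely what $k\Le n_{\lambdab}$ together with Remark~\ref{pours} guarantees.
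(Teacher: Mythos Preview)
Your proof is correct and follows essentially the same approach as the paper's: the same finiteness induction for (i), the same monotonicity induction for (ii) based on \eqref{ururl} and $\sigma_k\Ge 1$, and the same appeal to Theorems~\ref{bireks} and \ref{nthbuk} for (iii). The paper dispatches (iii) in a single sentence as a ``direct consequence'' of (i), (ii) and Theorem~\ref{nthbuk}; your explicit verification that the substitution $t_k=\sigma_k^{-1/2}$ converts \eqref{urure}--\eqref{ururl} into the identities of Theorem~\ref{nthbuk}(ii) (including the $k=1$ case via $t_0^2=1+b+c$) simply spells out that step.
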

   \begin{proof}
Set $J=\{j \in \nbb\colon 1 \Le j \Le
n_{\lambdab}\}$. It follows from \eqref{wireles}
and Remark~\ref{pours} that $n_{\lambdab}\Ge p$
and
   \begin{align} \label{justu}
J = \Big\{j\in \nbb\colon \int_{\rbb_+}
\frac{1}{x^j} \D \nu(x) < \infty\Big\}.
   \end{align}
Assume that $\sigma_j > 0$ for $j=1, \ldots,
p-1$ and $\sigma_p \Ge 1$. Using induction and
\eqref{wireles}-\eqref{ururl}, one can verify
that
   \begin{align} \label{leje}
\sigma_j \in (0,\infty), \quad j=1, \ldots, p,
   \end{align}
which justifies (i). The proof of (ii) is by
induction on $k$. Suppose that for some
unspecified integer $k$ such that $p \Le k <
n_{\lambdab}$, $\sigma_j \in [1,\infty)$ for
$j=p, \ldots, k$. This together with
\eqref{leje} and the induction hypothesis
implies that $\sigma_j \in (0,\infty)$ for $j=1,
\ldots, k$. Since $k+1 \in J$, we deduce from
\eqref{ururl}, \eqref{justu} and $\sigma_k \Ge
1$ that $\sigma_{k+1} \in \rbb$ and
   \begin{align*}
\sigma_{k+1} - 1 & = \frac{1}{\prod_{j=1}^{k}
\sigma_j} \Big(\int_{\rbb_+} \frac{1}{x^{k+1}}
\D \nu(x) + 2 c\Big) + \Big(1 -
\frac{1}{\sigma_k}\Big)
   \\
& \Ge \frac{1}{\prod_{j=1}^{k} \sigma_j}
\Big(\int_{\rbb_+} \frac{1}{x^{k+1}} \D \nu(x) +
2 c\Big) \Ge 0,
   \end{align*}
which shows that $\sigma_{k+1}\in [1,\infty)$.
This completes the induction argument. Thus (ii)
holds. Finally (iii) is a direct consequence of
(i),(ii) and Theorem~\ref{nthbuk}.
   \end{proof}
Applying Proposition~\ref{wlr1} to $p=1$, we get
the following.
   \begin{cor} \label{sipris}
Let $\wlam$ be a CPD unilateral weighted shift
with weights $\lambdab =
\{\lambda_k\}_{k=0}^{\infty}$ and $(b,c,\nu)$ be
its scalar representing triplet such that $b\Le
c$. Then $\wlam$ has a CPD $\infty$-step
backward extension if and only if $\int_{\rbb_+}
\frac{1}{x^k} \D \nu(x) < \infty$ for all $k\in
\nbb$.
   \end{cor}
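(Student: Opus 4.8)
The plan is to read this off from Proposition~\ref{wlr1} with $p=1$ for the ``if'' direction and from Theorem~\ref{nthbuk} for the ``only if'' direction; no new estimate is needed, and the whole content is verifying that the hypotheses of those results are met.

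For the ``if'' part, assume $\int_{\rbb_+}\frac{1}{x^k}\D\nu(x)<\infty$ for every $k\in\nbb$, and apply Proposition~\ref{wlr1} with $p=1$. Then condition \eqref{wireles} reduces to $\int_{\rbb_+}\frac1x\D\nu(x)<\infty$, which holds; the requirement ``$\sigma_k>0$ for $k=1,\ldots,p-1$'' is vacuous since $p=1$; and the remaining hypothesis $\sigma_p\Ge1$ becomes $\sigma_1=\int_{\rbb_+}\frac1x\D\nu(x)+1+c-b\Ge1$, which is exactly where $b\Le c$ enters (together with $\int_{\rbb_+}\frac1x\D\nu(x)\Ge0$). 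Since all the integrals $\int_{\rbb_+}\frac{1}{x^k}\D\nu(x)$ are finite, $n_{\lambdab}=\sup\{k\in\nbb\colon\int_{\rbb_+}\frac{1}{x^k}\D\nu(x)<\infty\}$ equals $+\infty$, so Proposition~\ref{wlr1}(iii) gives a CPD $n_{\lambdab}$-step, that is, a CPD $\infty$-step, backward extension of $\wlam$.

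For the ``only if'' part, suppose $\wlam$ has a CPD $\infty$-step backward extension and fix $k\in\nbb$. Then $\wlam$ has in particular a CPD $(k+1)$-step backward extension, say $W_{(t_{k+1},\ldots,t_1,\lambda_0,\lambda_1,\ldots)}$ with $\{t_j\}_{j=1}^{k+1}\subseteq(0,\infty)$. Since $k+1\Ge2$, the implication (i)$\Rightarrow$(ii) of Theorem~\ref{nthbuk} applies, and among the identities it provides, namely those indexed by $1,\ldots,k$, the one at index $k$ reads
\[
\frac{1}{t_k^2}=\Big(\prod_{j=1}^{k-1}t_j^2\Big)\Big(\int_{\rbb_+}\frac{1}{x^k}\D\nu(x)+2c\Big)+2-t_{k-1}^2,
\]
where $t_0=\lambda_0=(1+b+c)^{1/2}\in(0,\infty)$ (empty product equal to $1$ when $k=1$). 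The left-hand side is a finite positive real and $\prod_{j=1}^{k-1}t_j^2\in(0,\infty)$, so the right-hand side is finite only if $\int_{\rbb_+}\frac{1}{x^k}\D\nu(x)<\infty$ (recall the convention $\frac10=\infty$ for this integral). As $k\in\nbb$ was arbitrary, this proves the claim.

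I do not expect a real obstacle here: the argument is bookkeeping on top of the two quoted results. The only points that need a little attention are checking that $p=1$ is admissible in Proposition~\ref{wlr1}, which is precisely what the hypothesis $b\Le c$ secures, and taking $n=k+1$ rather than $n=k$ in the ``only if'' part so that the index $k$ falls among the genuine equations of Theorem~\ref{nthbuk}(ii) and the finiteness of $\int_{\rbb_+}\frac1{x^k}\D\nu(x)$ drops out at once; one could instead use the boundary inequality at index $n=k$ together with Theorem~\ref{bireks} for the base case $k=1$.
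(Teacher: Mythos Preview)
Your proof is correct and follows the paper's approach: the paper's entire proof is the sentence ``Applying Proposition~\ref{wlr1} to $p=1$, we get the following,'' which is exactly your ``if'' argument, while your ``only if'' argument via Theorem~\ref{nthbuk} spells out what the paper leaves implicit from the surrounding material.
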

As shown in Example~\ref{przyktwofor}, the
inequality $b\Le c$ is not necessary for a
unilateral weighted shift to have CPD
$\infty$-step backward extension.

Recall that in the case of the $1$-step backward
extension problem for subnormal unilateral
weighted shifts, the parameter $t \in
(0,\infty)$ for which the unilateral weighted
shift $W_{(t,\lambda_0, \lambda_1,\ldots)}$ is
subnormal (provided it exists) has a finite
upper bound depending on $(\lambda_0,
\lambda_1,\ldots)$ (see
\cite[Proposition~8]{Cur90}). In the
corresponding problem for completely
hyperexpansive unilateral weighted shifts, the
parameter $t$ has a positive lower bound (see
\cite[Corollary~3.3]{J-J-S06}). In contrast to
these two cases, when considering CPD unilateral
weighted shifts $W_{(t,\lambda_0,
\lambda_1,\ldots)}$, it may happen that there is
neither a finite upper nor a positive lower
bound for the parameter $t$. A similar effect
appears in the CPD $n$-step backward extension
problem.
   \begin{exa} \label{muritru}
Fix $k\in \nbb$ and take any $\theta \in
\big(\frac{1}{k}, \frac{1}{k-1} \big)$ (with the
convention that $\frac{1}{0}=\infty$). Define
the sequence
$\gammab=\{\gamma_n\}_{n=0}^{\infty}$ by
   \begin{align*}
\gamma_n = 1 + n \theta, \quad n\in \zbb_+.
   \end{align*}
Clearly, $\gammab$ is a CPD sequence of
exponential growth satisfying \eqref{gl-odd1}.
Its representing triplet $(b,c,\nu)$ is given by
$b=\theta$, $c=0$ and $\nu=0$. Applying
Theorem~\ref{wkwcpdws}, we get a CPD unilateral
weighted shift $\wlam$ with weights
$\lambdab=\{\lambda_n\}_{n=0}^{\infty}$ such
that $\hat\lambdab = \gammab$. The weights of
$\wlam$ are given by
   \begin{align*}
\lambda_n =
\sqrt{\frac{1+(n+1)\theta}{1+n\theta}}, \quad
n\in \zbb_+.
   \end{align*}
In fact, $\wlam$ is a $2$-isometry (see
\cite[Lemma~6.1(ii)]{Ja-St}). With notation as
in Proposition~\ref{wlr1}, we verify that
$\{\sigma_n\}_{n=0}^{\infty} \subseteq \rbb
\setminus \{0\}$ and
   \begin{align*}
\sigma_n = \frac{1-n\theta}{1-(n-1)\theta},
\quad n \in \nbb.
   \end{align*}
This implies that $\sigma_n > 0$ for $n=1,
\ldots, k-1$ and $\sigma_k < 0$. Hence, by
Theorem~\ref{nthbuk}, there exists a finite
sequence $\{t_j\}_{j=1}^{k-1} \subseteq
(0,\infty)$ (which can be dropped if $k=1$) such
that the unilateral weighted shift $W_{(t_k,
\ldots, t_1, \lambda_0, \lambda_1, \ldots)}$ is
CPD for every $t_k \in (0,\infty)$, and $\wlam$
has no CPD \mbox{$(k+1)$}-step backward
extension.
   \hfill $\diamondsuit$
   \end{exa}
   \section{Flatness of CPD unilateral weighted shifts}
In \cite[Theorem~6]{Sta66}, Stampfi proved that if two
consecutive weights of a subnormal unilateral weighted shift
$\wlam$ are equal, then $\wlam$ is flat, meaning that the
sequence of its weights stabilizes starting from the second
weight. In this section, we show that this is no longer true
for CPD unilateral weighted shifts, namely, the number of
consecutive equal weights has to be increased to four and
this number is optimal (under some constrains discussed in
detail below).

We begin with the case where four consecutive
weights not containing the initial one are
equal, and none of them is equal to one.
   \begin{thm} \label{4weights-1}
Suppose that $\wlam$ is a CPD unilateral
weighted shift with weights
$\lambdab=\{\lambda_n\}_{n=0}^{\infty}$ and
$\kappa\in \nbb$ is such that
   \begin{align} \label{oruusa}
\lambda_\kappa = \lambda_{\kappa+1} =
\lambda_{\kappa+2} = \lambda_{\kappa+3} \neq 1.
   \end{align}
Then $\lambda_0 \Le \lambda_1 = \lambda_n$ for
all $n\in \nbb$ and $\wlam$ is subnormal with
the Berger measure $\mu$ given by
   \begin{align} \label{mumier}
\mu=(1-Z) \delta_0 + Z \delta_{\lambda_1^2}
\quad \text{with} \quad Z =
\Big(\frac{\lambda_0}{\lambda_1}\Big)^2.
   \end{align}
   \end{thm}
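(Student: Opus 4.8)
The plan is to first force the tail of the weight sequence to be constant by exploiting the CPD property of the restriction $W_{(\lambda_\kappa,\lambda_{\kappa+1},\ldots)}$, then to propagate this rigidity backward one weight at a time via the backward-extension criterion of Theorem~\ref{bireks}, and finally to read off subnormality and the Berger measure from Corollary~\ref{Ber-G-W}.

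Write $\lambda:=\lambda_\kappa$. Since the restriction of a CPD operator to a closed invariant subspace is CPD and $W_{(\lambda_m,\lambda_{m+1},\ldots)}$ is unitarily equivalent to such a restriction, each $V_m:=W_{(\lambda_m,\lambda_{m+1},\ldots)}$ is CPD; in particular $V:=V_\kappa$ is CPD, with scalar representing triplet $(b',c',\nu')$ and associated sequence $\hat v$ obeying \eqref{wnezero}. Hypothesis \eqref{oruusa} gives $\hat v_0=1$, $\hat v_1=\lambda^2$, $\hat v_2=\lambda^4$, $\hat v_3=\lambda^6$, $\hat v_4=\lambda^8$, so $(\triangle^2\hat v)_n=\lambda^{2n}(\lambda^2-1)^2$ for $n=0,1,2$. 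On the other hand, applying $\triangle^2$ to \eqref{wnezero} and invoking \eqref{del2} shows that $\rho:=\nu'+2c'\delta_1$ is a finite positive Borel measure on $\rbb_+$ with $\int_{\rbb_+}x^n\D\rho=(\triangle^2\hat v)_n$. Comparing the two expressions for $n=0,1,2$ yields $\int_{\rbb_+}(x-\lambda^2)^2\D\rho(x)=0$; as $\lambda\ne1$ the total mass $(\lambda^2-1)^2$ is positive, so $\rho$ must be the point mass $(\lambda^2-1)^2\delta_{\lambda^2}$. Since $\lambda^2\ne1$ this forces $c'=0$ and $\nu'=(\lambda^2-1)^2\delta_{\lambda^2}$; substituting back into \eqref{wnezero} (with $b'=\hat v_1-\hat v_0-c'=\lambda^2-1$ by \eqref{euju}) and simplifying via \eqref{rnx-1} gives $\hat v_n=\lambda^{2n}$ for all $n$. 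Hence $V$ is subnormal with Berger measure $\delta_{\lambda^2}$, all its weights equal $\lambda$, and therefore $\lambda_n=\lambda$ for every $n\Ge\kappa$.

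Next I would prove $\lambda_n=\lambda$ for every $n\Ge1$ by downward induction on $j\in\{1,\ldots,\kappa-1\}$ (the range is empty when $\kappa=1$). Assume $\lambda_i=\lambda$ for all $i>j$; then $V_j=W_{(\lambda_j,\lambda,\lambda,\ldots)}$ is CPD, and running the same argument for its associated sequence $w$ (now $w_0=1$ and $w_n=\lambda_j^2\lambda^{2(n-1)}$ for $n\Ge1$) one gets $\int_{\rbb_+}x^n\D\rho_j=\lambda_j^2\lambda^{2(n-1)}(\lambda^2-1)^2$ for $n\Ge1$, where $\rho_j:=\nu_j+2c_j\delta_1$ and $(b_j,c_j,\nu_j)$ is the scalar representing triplet of $V_j$. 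Comparing for $n=1,2,3$ now gives $\int_{\rbb_+}x(x-\lambda^2)^2\D\rho_j(x)=0$, so $\supp{\rho_j}\subseteq\{0,\lambda^2\}$; computing the two masses from the moments of orders $0$ and $1$ one finds $\rho_j=\bigl(1-(\lambda_j/\lambda)^2\bigr)\delta_0+\frac{\lambda_j^2(\lambda^2-1)^2}{\lambda^2}\delta_{\lambda^2}$, whence (using $1\notin\{0,\lambda^2\}$ and $\nu_j(\{1\})=0$) $c_j=0$, $\nu_j=\rho_j$, $b_j=\lambda_j^2-1$, and positivity of $\nu_j$ forces $\lambda_j\Le\lambda$. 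If $\lambda_j<\lambda$, then $\nu_j(\{0\})>0$, so $\int_{\rbb_+}\frac{1}{x}\D\nu_j=\infty$ (convention $\frac{1}{0}=\infty$); applying Theorem~\ref{bireks}(ii) with the base $V_j$ and the weight $t=\lambda_{j-1}$, its criterion fails, so $W_{(\lambda_{j-1},\lambda_j,\lambda,\ldots)}=V_{j-1}$ would not be CPD --- contradicting that $V_{j-1}$ is a restriction of $\wlam$. Hence $\lambda_j=\lambda$, closing the induction and giving $\lambda_n=\lambda$ for all $n\Ge1$.

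Finally, applying the computation of the previous paragraph with $j=0$ to $\wlam=W_{(\lambda_0,\lambda,\lambda,\ldots)}$ itself shows $\lambda_0\Le\lambda=\lambda_1$ and that the scalar representing triplet of $\wlam$ is $\bigl(\lambda_0^2-1,\,0,\,(1-Z)\delta_0+Z(\lambda^2-1)^2\delta_{\lambda^2}\bigr)$ with $Z=(\lambda_0/\lambda)^2$; then \eqref{wnezero} and \eqref{rnx-1} give $\hat\lambda_n=(1-Z)\cdot 0^n+Z\lambda^{2n}=\int_{\rbb_+}x^n\D\mu$ for all $n$, with $\mu=(1-Z)\delta_0+Z\delta_{\lambda_1^2}$. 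Thus $\hat\lambdab$ is a Stieltjes moment sequence, and Corollary~\ref{Ber-G-W} shows that $\wlam$ is subnormal with Berger measure $\mu$, which is exactly \eqref{mumier}. I expect the only real friction to be computational: correctly extracting $\nu_j(\{0\})$ from the zeroth moment once $\supp{\rho_j}$ has been pinned to $\{0,\lambda^2\}$, and checking the algebraic simplifications --- the conceptual core being just that a positive measure with vanishing ``variance'' is a point mass, together with the fact that a backward extension past a Berger-type atom at $0$ cannot be CPD.
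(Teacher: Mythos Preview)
Your proof is correct and takes a genuinely different route from the paper's. The paper works globally with the original representing triplet $(b,c,\nu)$ of $\wlam$: it observes that the measure $\rho_\kappa(\varDelta)=\int_\varDelta x^{\kappa}(\nu+2c\delta_1)(\D x)$ has moments $\gamma_\kappa(\theta-1)^2\theta^n$ for $n=0,1,2$ (with $\theta=\lambda_\kappa^2$), invokes Lemma~\ref{ajjaj} with $k=0$ to conclude that $\rho_\kappa$ is a single point mass at $\theta$, and from this reads off $c=0$ and $\nu=u\delta_0+v\delta_\theta$; substituting into \eqref{wnezero} yields $\gamma_n=X+Yn+Z\theta^n$ for $n\Ge 1$, and the constraints coming from \eqref{oruusa} force $X=Y=0$ directly. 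Your approach instead localizes: you first pin down the tail $V_\kappa$ completely (using the elementary variance identity $\int(x-\lambda^2)^2\D\rho=0$ in place of Lemma~\ref{ajjaj}), and then propagate the equality $\lambda_j=\lambda$ backward one step at a time via Theorem~\ref{bireks}, exploiting the fact that an atom of $\nu_j$ at $0$ makes $\int_{\rbb_+}\frac{1}{x}\D\nu_j=\infty$ and hence obstructs any CPD backward extension. The paper's argument is self-contained and yields the global form of $\nu$ in one stroke; yours is more modular, showcases the operator-theoretic content of Theorem~\ref{bireks}, and replaces the linear-algebra step \eqref{alelipa}--\eqref{nidra} by a clean obstruction argument. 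Both ultimately rest on the same measure-theoretic rigidity (a positive measure with vanishing ``variance'' about a point is a point mass, or is supported in $\{0,\theta\}$ when the weight $x$ is inserted), simply applied at different levels of the shift.
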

   \begin{proof}
Set $\theta = \lambda_{\kappa}^2$,
$\gamma_n=\hat\lambda_n$ for $n\in \zbb_+$ (see
\eqref{mur-hupy}) and $\gammab =
\{\gamma_n\}_{n=0}^{\infty}$. By \eqref{oruusa},
we have (see \eqref{bekon} for notation)
   \begin{align} \label{takieconieco}
\gamma^{(\kappa)}_{n} = \gamma_{\kappa}
\theta^n, \quad n=0, 1, 2, 3, 4.
   \end{align}
Straightforward computations show that
   \begin{align}  \notag
(\triangle \gammab^{(\kappa)})_n & =
\gamma_{\kappa} (\theta-1) \theta^n, \quad
n=0,1,2,3,
   \\ \label{trokj}
(\triangle^2 \gammab^{(\kappa)})_n & =
\gamma_{\kappa} (\theta-1)^2 \theta^n, \quad
n=0,1,2.
   \end{align}
Let $(b,c,\nu)$ be the scalar representing
triplet of $\wlam$. It follows from
\eqref{wnezero} and \eqref{del2} that
   \begin{align*}
(\triangle^2 \gammab)_n = \int_{\rbb_+} x^n (\nu
+ 2 c \delta_1) (\D x), \quad n\in \zbb_+.
   \end{align*}
This, in turn, implies that
   \begin{align}  \label{trookj}
\alpha_n:=(\triangle^2 \gammab)^{(\kappa)}_n =
\int_{\rbb_+} x^n \D \rho_{\kappa}(x), \quad
n\in \zbb_+,
   \end{align}
where $\rho_{\kappa}$ is a finite compactly
supported Borel measure on $\rbb_+$ given by
   \begin{align} \label{huru}
\rho_{\kappa}(\varDelta) = \int_{\varDelta}
x^{\kappa} (\nu + 2 c \delta_1)(\D x), \quad
\varDelta \in \borel{\rbb_+}.
   \end{align}
According to \eqref{fateu}, \eqref{trokj} and
\eqref{trookj}, we have
   \begin{align} \label{ibjub}
\alpha_n=\int_{\rbb_+} x^n \D
\rho_{\kappa}(x)=\gamma_{\kappa} (\theta-1)^2
\theta^n, \quad n=0,1,2.
   \end{align}
Since $\theta \neq 1$, we infer from
\eqref{trookj}, \eqref{ibjub} and \eqref{foot-1}
that the Stieltjes moment sequence
$\{\alpha_n\}_{n=0}^{\infty}$ is non-degenerate.
It follows from \eqref{ibjub} that
   \begin{align*}
\alpha_1^2 = \alpha_0 \alpha_2.
   \end{align*}
Applying Lemma~\ref{ajjaj} with $k=0$ , we
deduce that there exist $\xi, \zeta\in
(0,\infty)$ such that $\rho_{\kappa} = \xi
\delta_{\zeta}$. As a consequence of
\eqref{ibjub}, we obtain
   \begin{align} \label{dirdu}
\gamma_{\kappa} (\theta-1)^2 \theta^n =
\xi\zeta^n, \quad n=0,1,2.
   \end{align}
Substituting $n=0$, we get $\xi=\gamma_{\kappa}
(\theta-1)^2$, which together with \eqref{dirdu}
gives $\zeta=\theta$. Hence $\rho_{\kappa} = \xi
\delta_{\theta}$, which yields
   \begin{align*}
\xi \delta_{\theta} (\varDelta)
\overset{\eqref{huru}}= \int_{\varDelta}
x^{\kappa} (\nu + 2 c \delta_1)(\D x), \quad
\varDelta \in \borel{\rbb_+}.
   \end{align*}
Since $\theta \neq 1$, this implies that $c=0$
and that there exist $u \in \rbb_+$ and $v \in
(0,\infty)$ such that
   \begin{align} \label{tolkobyta}
\nu = u \delta_0 + v \delta_{\theta}.
   \end{align}
Combined with \eqref{wnezero}, this shows that
   \begin{align} \notag
\gamma_n & = 1 + b n + u Q_n(0) + v Q_n(\theta)
   \\ \notag
& \hspace{-1ex}\overset{\eqref{rnx-1}}= 1 + b n
+ u (n-1) + v \frac{\theta^n-1 - n
(\theta-1)}{(\theta-1)^2}
   \\ \label{zrabi}
& = X + Y n + Z \theta^n, \quad n \in \nbb,
   \end{align}
where $X:=1 -u - Z$, $Y:= b + u -
\frac{v}{\theta-1}$ and
$Z:=\frac{v}{(\theta-1)^2}$. By \eqref{zrabi},
we have
   \begin{align*}
\gamma^{(\kappa)}_n = (X + \kappa Y) + Y n + (Z
\theta^{\kappa}) \theta^n, \quad n \in \zbb_+.
   \end{align*}
This together with \eqref{takieconieco} leads to
   \begin{align*}
\gamma_{\kappa} \theta^n = (X + \kappa Y) + Y n
+ (Z \theta^{\kappa}) \theta^n, \quad n=0, 1, 2,
3, 4.
   \end{align*}
Therefore, we get
   \begin{align} \label{alelipa}
(X + \kappa Y) + Y n = (\gamma_{\kappa} - Z
\theta^{\kappa}) \theta^n, \quad n=0, 1, 2, 3,
4.
   \end{align}
Substituting $n=0$ into \eqref{alelipa} gives
   \begin{align} \label{nidra}
X + \kappa Y = \gamma_{\kappa} - Z
\theta^{\kappa}.
   \end{align}
Next substituting $n=1,2$ into \eqref{alelipa}
and using \eqref{nidra}, we obtain
   \begin{align*}
(X + \kappa Y) + Y & = (X + \kappa Y) \theta,
   \\
(X + \kappa Y) + 2 Y & = (X + \kappa Y)
\theta^2.
   \end{align*}
Since $\theta \neq 1$, we conclude that the
above homogeneous system of linear equations has
only the solution $X=Y=0$. It follows from
\eqref{zrabi} that
   \begin{align} \label{zurv}
\gamma_n = Z \theta^n, \quad n \in \nbb.
   \end{align}
(Note that $Z > 0$ because $\gamma_1 > 0$.) This
implies that
   \begin{align} \label{zurv0}
\lambda_n \overset{\eqref{self-map}}=
\sqrt{\frac{\gamma_{n+1}} {\gamma_n}} =
\sqrt{\theta}, \quad n\in \nbb.
   \end{align}
Since $1 - u - Z = X = 0$ and $u\in \rbb_+$, we
infer from \eqref{zurv0} that
   \begin{align} \label{grenb}
\frac{\lambda_0^2}{\lambda_1^2} =
\frac{\gamma_1}{\theta} \overset{\eqref{zurv}}=
Z = 1 - u \Le 1.
   \end{align}
Using \eqref{zurv}, \eqref{zurv0} and
\eqref{grenb}, we verify that the measure $\mu$
given by \eqref{mumier} satisfies \eqref{Stieq},
so by Corollary~\ref{Ber-G-W}, $\wlam$ is
subnormal and $\mu$ is its Berger measure. This
completes the proof.
   \end{proof}
If the weights in the sequence \eqref{oruusa}
are equal to $1$, then we can reduce their
number to two ($\lambda_0$ is still excluded).
   \begin{thm} \label{2weights-1}
Suppose that $\wlam$ is a CPD unilateral
weighted shift with weights
$\lambdab=\{\lambda_n\}_{n=0}^{\infty}$ and
$\kappa\in \nbb$ is such that
   \begin{align*}
\lambda_\kappa = \lambda_{\kappa+1} = 1.
   \end{align*}
Then $\lambda_0 \Le 1 = \lambda_n$ for all $n\in
\nbb$ and $\wlam$ is subnormal with the Berger
measure $\mu$ given by
   \begin{align*}
\mu = (1-\lambda_0^2) \delta_0 + \lambda_0^2
\delta_1.
   \end{align*}
   \end{thm}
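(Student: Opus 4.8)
The plan is to exploit the strong degeneracy created by the hypothesis: here $\theta := \lambda_\kappa^2 = 1$, which forces the measure part of the scalar representing triplet to collapse onto the origin. Write $\gamma_n = \hat\lambda_n$, $\gammab = \{\gamma_n\}_{n=0}^{\infty}$, and let $(b,c,\nu)$ be the scalar representing triplet of $\wlam$ (Theorem~\ref{cpdws}(ii)). Since $\lambda_\kappa = \lambda_{\kappa+1} = 1$, the definition \eqref{mur-hupy} gives $\gamma_\kappa = \gamma_{\kappa+1} = \gamma_{\kappa+2}$, hence $(\triangle^2\gammab)_\kappa = \gamma_{\kappa+2} - 2\gamma_{\kappa+1} + \gamma_\kappa = 0$. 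On the other hand, applying $\triangle^2$ to \eqref{wnezero} and using \eqref{del2} yields $(\triangle^2\gammab)_n = 2c + \int_{\rbb_+} x^n\,\D\nu(x)$ for all $n\in\zbb_+$. Taking $n = \kappa$ and recalling that $c\in\rbb_+$, that $\nu$ is a positive measure on $\rbb_+$, and that $\kappa\Ge 1$, I conclude $c = 0$ and $\int_{\rbb_+} x^\kappa\,\D\nu(x) = 0$; since $x^\kappa > 0$ on $(0,\infty)$, the latter forces $\nu = u\,\delta_0$ for some $u\in\rbb_+$, which is automatically compatible with $\nu(\{1\})=0$.

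Next I would substitute $c = 0$ and $\nu = u\,\delta_0$ into \eqref{wnezero} and use that $Q_n(0) = n-1$ for $n\Ge 1$ (read off from \eqref{klaud}) to obtain $\gamma_n = (1-u) + (b+u)n$ for every $n\in\nbb$, while $\gamma_0 = 1$. The equality $\gamma_{\kappa+1} = \gamma_\kappa$ (both indices lying in $\nbb$ since $\kappa\Ge 1$) then forces $b + u = 0$, so $\gamma_n = 1-u$ for all $n\in\nbb$. Since the weights of $\wlam$ are positive, $\gamma_n = \hat\lambda_n > 0$, whence $u\in[0,1)$. Then \eqref{self-map} gives $\lambda_0^2 = \gamma_1/\gamma_0 = 1-u \Le 1$ and $\lambda_n^2 = \gamma_{n+1}/\gamma_n = 1$ for every $n\in\nbb$; this is the first assertion, with $u = 1-\lambda_0^2$.

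It remains to identify the Berger measure, which I would do by direct verification: $\mu := (1-\lambda_0^2)\delta_0 + \lambda_0^2\,\delta_1 = u\,\delta_0 + (1-u)\,\delta_1$ satisfies $\int_{\rbb_+} x^n\,\D\mu(x) = 1 = \gamma_0$ for $n = 0$ and $\int_{\rbb_+} x^n\,\D\mu(x) = 1-u = \gamma_n$ for $n\in\nbb$, i.e., \eqref{Stieq} holds with $\hat\lambda_n = \gamma_n$. By Corollary~\ref{Ber-G-W}, $\wlam$ is subnormal and $\mu$ is its (unique, compactly supported, finite) Berger measure. I do not expect a genuine obstacle here: once the machinery of Theorem~\ref{4weights-1} (the scalar representing triplet together with \eqref{wnezero}, \eqref{del2}, \eqref{klaud}, \eqref{self-map}) is available, the essential point is that the case $\theta = 1$ is rigid enough that a single vanishing second difference already kills $c$ and pushes $\nu$ onto $\{0\}$, after which $\gammab$ is affine in $n$ for $n\Ge 1$ and one further linear relation determines it completely --- which is precisely why two equal weights suffice here whereas four were needed in Theorem~\ref{4weights-1}. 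The only care required is the bookkeeping of $Q_n(0)$ for small $n$ and the systematic use of the hypothesis $\kappa\Ge 1$, so that $x^\kappa$ vanishes only at the origin and so that the indices $\kappa,\kappa+1$ fall in the affine regime $n\Ge 1$.
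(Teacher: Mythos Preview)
Your proof is correct and follows essentially the same approach as the paper's: both use the vanishing of $(\triangle^2\gammab)_\kappa$ together with the identity $(\triangle^2\gammab)_n = 2c + \int_{\rbb_+} x^n\,\D\nu$ to force $c=0$ and $\supp{\nu}\subseteq\{0\}$, then exploit the resulting affine expression $\gamma_n = (1-u) + (b+u)n$ for $n\Ge 1$ and one further equality $\gamma_{\kappa+1}=\gamma_\kappa$ to kill the linear term, concluding with the explicit Berger measure. The only cosmetic difference is that the paper routes the second-difference computation through the measure $\rho_\kappa$ of \eqref{huru}, whereas you compute it directly; your version is slightly more streamlined but not substantively different.
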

   \begin{proof}
We will modify the proof of
Theorem~\ref{4weights-1}. Set $\gamma_n = \hat
\lambda_n$ for $n\in \zbb_+$. Let $(b,c,\nu)$ be
the scalar representing triplet of $\wlam$.
First note that
   \begin{align} \label{kolynu}
\gamma^{(\kappa)}_n = \gamma_{\kappa}, \quad n=
0,1,2,
   \end{align}
and thus
   \begin{align*}
(\triangle^2 \gammab^{(\kappa)})_0 = 0.
   \end{align*}
This implies that
   \begin{align*}
\int_{\rbb_+} x^{\kappa} (\nu + 2 c \delta_1)(\D
x) \overset{\eqref{huru}}=\rho_{\kappa}(\rbb_+)
\overset{\eqref{trookj}} = (\triangle^2
\gammab)^{(\kappa)}_0 \overset{\eqref{fateu}} =
(\triangle^2 \gammab^{(\kappa)})_0 = 0.
   \end{align*}
Since $\kappa \Ge 1$, we deduce that $c=0$ and
$\supp{\nu} \subseteq \{0\}$. Hence, we have
   \begin{align} \label{dua2}
\gamma_n \overset{\eqref{wnezero}}= \big(1 -
\nu(\{0\})\big) + \widetilde Y n, \quad n \in
\nbb,
   \end{align}
where $\widetilde Y:=b+\nu(\{0\})$, and
consequently
   \begin{align*}
\gamma_{\kappa} \overset{\eqref{kolynu}}=
\gamma^{(\kappa)}_n \overset{\eqref{dua2}}=
\widetilde X + \widetilde Y n, \quad n=0,1,2,
   \end{align*}
where $\widetilde X = 1 - \nu(\{0\}) + \kappa
\widetilde Y$. This implies that $\widetilde
Y=0$ and thus by \eqref{dua2}, we get
   \begin{align*}
1 - \nu(\{0\}) = \gamma_n=\gamma_1= \lambda_0^2,
\quad n\in \nbb.
   \end{align*}
Therefore $\lambda_0 \Le 1$ and $\lambda_n =
\sqrt{\frac{\gamma_{n+1}}{\gamma_n}}=1$ for all
$n\in \nbb$. The remaining part of the proof
proceeds as in Theorem~\ref{4weights-1}.
   \end{proof}
We now consider the case of the first four equal
weights, none of which is equal to $1$.
   \begin{thm} \label{firwts}
Suppose that $\wlam$ is a CPD unilateral
weighted shift with weights
$\lambdab=\{\lambda_n\}_{n=0}^{\infty}$ such
that
   \begin{align*}
\lambda_0 = \lambda_1 = \lambda_2 = \lambda_3
\neq 1.
   \end{align*}
Then $\frac{1}{\lambda_0} \wlam$ is the
unilateral shift.
   \end{thm}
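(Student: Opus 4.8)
The plan is to adapt the argument used in the proof of Theorem~\ref{4weights-1} to the ``base'' case $\kappa=0$, where it becomes considerably shorter because the extremal measure that appears is pinned down completely rather than up to one free parameter. First I would set $\theta:=\lambda_0^2$ (so $\theta\neq 1$ and $\theta>0$) and $\gammab:=\hat\lambdab$; the hypothesis $\lambda_0=\lambda_1=\lambda_2=\lambda_3$ is exactly what is needed to guarantee $\gamma_n=\theta^n$ for $n=0,1,2,3,4$. Letting $(b,c,\nu)$ be the scalar representing triplet of $\wlam$, I would use \eqref{wnezero} and \eqref{del2} to get $(\triangle^2 \gammab)_n=\int_{\rbb_+}x^n(\nu+2c\delta_1)(\D x)$ for all $n\in\zbb_+$, and a direct computation from $\gamma_n=\theta^n$ ($n\Le 4$) to get $(\triangle^2 \gammab)_n=(\theta-1)^2\theta^n$ for $n=0,1,2$.

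Next I would put $\rho:=\nu+2c\delta_1$, a finite, positive, compactly supported Borel measure on $\rbb_+$, so that $\alpha_n:=\int_{\rbb_+}x^n\D\rho(x)$ is a Stieltjes moment sequence with $\alpha_n=(\theta-1)^2\theta^n$ for $n=0,1,2$. Since $\alpha_1=(\theta-1)^2\theta>0$, this sequence is non-degenerate by \eqref{foot-1}, and the identity $\alpha_1^2=(\theta-1)^4\theta^2=\alpha_0\alpha_2$ lets me invoke Lemma~\ref{ajjaj} with $k=0$ to conclude $\supp{\rho}=\{\zeta\}$ for some $\zeta\in(0,\infty)$, i.e. $\rho=\xi\delta_\zeta$ with $\xi\in(0,\infty)$. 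Matching the moments $\alpha_0$ and $\alpha_1$ then forces $\xi=(\theta-1)^2$ and $\zeta=\theta$, so that $\nu+2c\delta_1=(\theta-1)^2\delta_\theta$. Because $\theta\neq 1$ and $\nu(\{1\})=0$, evaluating both sides on $\{1\}$ gives $c=0$, and consequently $\nu=(\theta-1)^2\delta_\theta$.

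With $b,c,\nu$ now known, I would recover the weights. From \eqref{euju}, $b=\gamma_1-\gamma_0-c=\theta-1$. Substituting $b=\theta-1$, $c=0$, $\nu=(\theta-1)^2\delta_\theta$ into \eqref{wnezero} and using \eqref{rnx-1} in the form $(\theta-1)^2Q_n(\theta)=\theta^n-1-n(\theta-1)$, I obtain $\gamma_n=1+(\theta-1)n+\theta^n-1-n(\theta-1)=\theta^n$ for every $n\in\zbb_+$. Then \eqref{self-map} yields $\lambda_n=\sqrt{\gamma_{n+1}/\gamma_n}=\sqrt\theta=\lambda_0$ for all $n\in\zbb_+$, so every weight of $\tfrac1{\lambda_0}\wlam$ equals $1$, which is precisely the assertion.

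I do not expect a genuine obstacle here: the computation is routine once one observes that for $\kappa=0$ the measure $x^{\kappa}(\nu+2c\delta_1)$ is just $\nu+2c\delta_1$ itself, so that (unlike the case $\kappa\Ge 1$ of Theorem~\ref{4weights-1}, where the factor $x^{\kappa}$ annihilates a possible atom of $\nu$ at $0$ and leaves its mass undetermined) both $c$ and $\nu$ are determined outright, and the bookkeeping with auxiliary constants $X,Y,Z$ is avoided. The only step that needs a moment of care is verifying the hypotheses of Lemma~\ref{ajjaj} — namely that $\{\alpha_n\}_{n=0}^{\infty}$ is a \emph{non-degenerate} Stieltjes moment sequence — which is immediate from $\alpha_1>0$ together with \eqref{foot-1}.
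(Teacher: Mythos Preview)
Your proposal is correct and follows essentially the same approach as the paper's proof, which modifies the argument of Theorem~\ref{4weights-1} to the case $\kappa=0$ (where, as you note, the absence of the factor $x^{\kappa}$ forces $u=0$ in \eqref{tolkobyta}). The only minor difference is cosmetic: the paper carries along the constants $X,Y,Z$ from \eqref{zrabi} and solves $X=Y=0$, whereas you compute $b=\theta-1$ directly from \eqref{euju} and substitute into \eqref{wnezero} to obtain $\gamma_n=\theta^n$ immediately --- a slightly cleaner endgame, but the same idea.
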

   \begin{proof}
We again modify the proof of
Theorem~\ref{4weights-1}. For the reader's
convenience, we will point out the most
important differences. First, in
\eqref{tolkobyta} we have $u=0$. Consequently,
the identity \eqref{zrabi} takes the form
   \begin{align*}
\gamma_n = X + Y n + Z \theta^n, \quad n \in
\zbb_+.
   \end{align*}
Further differences appear in the formulas
\eqref{zurv} and \eqref{zurv0}, so now we have
   \begin{align*}
\text{$\gamma_n = Z \theta^n$ and $\lambda_n =
\sqrt{\theta}$ for all $n\in \zbb_+$.}
   \end{align*}
This completes the proof.
   \end{proof}
Finally, arguing as in the proof of
Theorem~\ref{2weights-1}, we obtain the variant
of this theorem for $\kappa=0$.
   \begin{thm} \label{firetom}
Suppose that $\wlam$ is a CPD unilateral
weighted shift with weights
$\lambdab=\{\lambda_n\}_{n=0}^{\infty}$ such
that
   \begin{align*}
\lambda_0 = \lambda_1 = 1.
   \end{align*}
Then $\wlam$ is the unilateral shift.
   \end{thm}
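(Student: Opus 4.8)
The plan is to specialize the argument of Theorem~\ref{2weights-1} to the endpoint $\kappa=0$, where it becomes considerably shorter because the second-difference identity now annihilates \emph{all} of the measure $\nu$, not merely its part away from the origin. Write $\gamma_n=\hat\lambda_n$ for $n\in\zbb_+$ and let $(b,c,\nu)$ be the scalar representing triplet of $\wlam$ supplied by Theorem~\ref{cpdws}. From $\lambda_0=\lambda_1=1$ together with \eqref{mur-hupy} I read off that $\gamma_0=\gamma_1=\gamma_2=1$, and hence $(\triangle^2\gammab)_0=\gamma_2-2\gamma_1+\gamma_0=0$.

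Next I would apply $\triangle^2$ to the identity \eqref{wnezero} and use \eqref{del2}: the constant and linear terms are killed, $\triangle^2(cn^2)=2c$, and $\triangle^2\int_{\rbb_+}Q_n\D\nu=\int_{\rbb_+}x^n\D\nu$, so that $(\triangle^2\gammab)_0=2c+\nu(\rbb_+)$ (this is exactly the $\kappa=0$ instance of \eqref{trookj}, with $\rho_0=\nu+2c\delta_1$). Since $c\in\rbb_+$ and $\nu$ is a positive measure, the resulting equality $2c+\nu(\rbb_+)=0$ forces $c=0$ and $\nu=0$. This is the one genuine difference from Theorem~\ref{2weights-1}: there the integrand $x^{\kappa}$ with $\kappa\Ge 1$ absorbs the mass of $\nu$ at $0$, leaving only $\supp{\nu}\subseteq\{0\}$, whereas here the integrand is identically $1$ and nothing survives.

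With $c=0$ and $\nu=0$, formula \eqref{wnezero} collapses to $\hat\lambda_n=1+bn$ for all $n\in\zbb_+$; evaluating at $n=1$ and using $\hat\lambda_1=\lambda_0^2=1$ gives $b=0$, hence $\hat\lambda_n=1$ for every $n\in\zbb_+$. Finally \eqref{self-map} yields $\lambda_n=\sqrt{\hat\lambda_{n+1}/\hat\lambda_n}=1$ for all $n\in\zbb_+$, which is precisely the assertion that $\wlam$ is the unilateral shift.

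I do not expect any real obstacle: this is the degenerate case $\kappa=0$ of a result whose non-degenerate analogue has already been proved in full, and every step is an immediate consequence of formulas established earlier. The only point worth flagging is the stronger conclusion $\nu=0$ (rather than $\supp{\nu}\subseteq\{0\}$), which is exactly what upgrades the conclusion from ``$\lambda_0\Le 1=\lambda_n$'' in Theorem~\ref{2weights-1} to full flatness with all weights equal to~$1$.
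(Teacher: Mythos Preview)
Your proof is correct and is precisely the specialization of the argument of Theorem~\ref{2weights-1} to $\kappa=0$ that the paper has in mind. Your observation that the integrand $x^{\kappa}$ becomes identically $1$, forcing $\nu=0$ rather than merely $\supp{\nu}\subseteq\{0\}$, is exactly the point that distinguishes this case and yields the stronger conclusion.
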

We now show that the numbers of consecutive
equal weights appearing in
Theorems~\ref{4weights-1}, \ref{2weights-1},
\ref{firwts} and \ref{firetom} are optimal. The
first counterexample concerns the case of three
consecutive equal weights. It is noteworthy that
if two consecutive weights of a subnormal
unilateral weighted shift are equal, all
weights, except the first, are equal (see
\cite[Theorem~6]{Sta66}). Thus, for subnormal
unilateral weighted shifts the optimal number of
consecutive equal weights is exactly $2$.
   \begin{exa} \label{przyktwofor}
Fix $\theta \in (1,\infty)\backslash \{3\}$ and
define the triplet $(b,c,\nu)$ by $b= \theta -
1$, $c=0$ and
   \begin{align*}
\nu= \frac{1}{2} (\theta - 1)^2
\big(\delta_{\frac{1}{3}\theta} +
\delta_{\frac{5}{3}\theta}\big).
   \end{align*}
Let $\gammab=\{\gamma_n\}_{n=0}^{\infty}$ be the
CPD sequence given by \eqref{gl-odd2}. Clearly,
by \eqref{immed-pos}, $\gammab$ satisfies
\eqref{gl-odd1}. Applying
Theorem~\ref{wkwcpdws}, we get a CPD unilateral
weighted shift $\wlam$ with weights $\lambdab =
\{\lambda_n\}_{n=0}^{\infty}$ such that
$\hat\lambdab = \gammab$. Straightforward
computations yield
   \begin{align} \label{gimtre}
\gamma_n = \theta^n, \quad n=0,1,2,3,
   \end{align}
and
   \begin{align} \label{trata}
\gamma_4 = \frac{1}{9}\theta^2(13 \theta^2 - 8
\theta + 4).
   \end{align}
(Note that the expression on the right-hand side
of \eqref{trata} is equal to $\theta^4$ if and
only if $\theta =1$, which is not the case, so
$\gamma_4 \neq \theta^4$.) Hence, by
\eqref{wagua} and \eqref{gimtre} we have
   \begin{align} \label{prusy}
\lambda_0=\lambda_1=\lambda_2 = \sqrt{\theta}
\neq 1,
   \end{align}
and
   \begin{align} \label{prus}
\lambda_3 = \frac{\sqrt{13 \theta^2 - 8 \theta +
4}}{3\sqrt{\theta}} \neq \sqrt{\theta}.
   \end{align}
This shows that Theorem~\ref{firwts} is no
longer true if the number of consecutive equal
weights is decreased to $3$.

To cover the case $\kappa \Ge 1$ discussed in
Theorem~\ref{4weights-1}, we will use
Proposition~\ref{wlr1}. With notation as in this
proposition, note that $n_{\lambdab}=\infty$ and
   \begin{align} \label{silg}
\sigma_1 = g_1(\theta),
   \end{align}
where $g_1\colon (1,\infty) \to \rbb$ is defined
by
   \begin{align*}
g_1(x) := \frac{4x^2 - 8 x +9}{5x}, \quad x \in
(1,\infty).
   \end{align*}
Since
   \begin{align*}
g_1(x) - 1 = \frac{(x-1)(4x-9)}{5x}, \quad x \in
(1,\infty),
   \end{align*}
we see that
   \begin{align} \label{fgdw}
g_1(x)\Ge 1, \quad x \in
\Big[\frac{9}{4},\infty\Big).
   \end{align}
Fix any $\theta \in \big[\frac{9}{4},\infty\big)
\setminus \{3\}$. Using \eqref{silg} and
\eqref{fgdw} and applying Proposition~\ref{wlr1}
with $p=1$, we conclude that $\wlam$ has a CPD
$\infty$-step backward extension. Summarizing,
we have proved that for every $n \in \nbb$,
there exists a sequence $\{t_j\}_{j=1}^{n}
\subseteq (0,\infty)$ such that the unilateral
weighted shift $W_{\lambdab^{(n)}}$ with weights
   \begin{align*}
\lambdab^{(n)} = (t_{n}, \ldots, t_1,
\boxed{\lambda_0, \lambda_1, \lambda_2},
\lambda_3, \ldots),
   \end{align*}
is CPD. By \eqref{prusy}, \eqref{prus} and
Theorems~\ref{4weights-1} and \ref{firwts},
$\lambda_0=\lambda_1=\lambda_2 \neq 1$ and $t_1
\neq \lambda_0$. This means that regardless of
the value of $\kappa\Ge 1$,
Theorem~\ref{4weights-1} ceases to be true if
the number of consecutive equal weights is
decreased to $3$.

Applying Proposition~\ref{wlr1}, now to any
$p\Ge 1$, and using computer simulations, one
can confirm that the above conclusion is true
for $\theta \in (1,\infty)\backslash \{3\}$.
   \hfill $\diamondsuit$
   \end{exa}
The second counterexample shows that the number
of consecutive equal weights appearing in
Theorems~\ref{2weights-1} and \ref{firetom} is
optimal. It is only interesting for
non-subnormal CPD unilateral weighted shifts
because the class of subnormal operators, unlike
CPD ones, is scalable (see
\cite[Corollary~3.4.7]{Ja-Ju-St20}).
   \begin{exa} \label{gusv}
Let $\nu$ be a compactly supported finite Borel
measure on $\rbb_+$ such that $\nu(\{1\})=0$ and
let $c\in (0,\infty)$. Set $b=-c$. Let
$\gammab=\{\gamma_n\}_{n=0}^{\infty}$ be the CPD
sequence defined by \eqref{gl-odd2}. Because of
\eqref{immed-pos}, $\gammab$ satisfies
\eqref{gl-odd1}. Applying
Theorem~\ref{wkwcpdws}, we obtain a CPD
unilateral weighted shift $\wlam$ with weights
$\lambdab = \{\lambda_n\}_{n=0}^{\infty}$ such
that $\hat\lambdab = \gammab$. Since $c > 0$,
one can infer from \eqref{wagua} that
$\lambda_0=1$ and $\lambda_1 \neq 1$, which
means that $\wlam$ is not the unilateral shift.
In fact, $\wlam$ is not subnormal because $c> 0$
(use Theorem~\ref{truplyt} and \cite[Theorem
3.4.1]{Ja-Ju-St20}). This shows that
Theorem~\ref{firetom} is no longer true if the
first weight is equal to $1$.

Finally, assuming additionally that
$\int_{\rbb_+} \frac{1}{x^k} \D \nu(x) < \infty$
for all $k\in \nbb$ and applying
Corollary~\ref{sipris} to the above unilateral
weighted shift $\wlam$, we conclude that
regardless of the value of $\kappa\Ge 1$,
Theorem~\ref{2weights-1} ceases to be true if
$\lambda_{\kappa}=1$ (cf.\
Example~\ref{przyktwofor}).
   \hfill $\diamondsuit$
   \end{exa}
   \bibliographystyle{amsalpha}

\begin{thebibliography}{99}
   \bibitem{Ag-St1} J. Agler, M. Stankus, $m$-isometric
transformations of Hilbert spaces, I, {\it Integr.
Equ. Oper. Theory} {\bf 21} (1995), 383-429.
   \bibitem{Ag-St2} J. Agler, M. Stankus, $m$-isometric
transformations of Hilbert spaces, II, {\it Integr.
Equ. Oper. Theory} {\bf 23} (1995), 1-48.
   \bibitem{Ag-St3} J. Agler, M. Stankus, $m$-isometric
transformations of Hilbert spaces, III, {\it Integr.
Equ. Oper. Theory} {\bf 24} (1996), 379-421.
   \bibitem{B-C-R} C. Berg, J. P. R. Christensen, P. Ressel,
{\em Harmonic Analysis on Semigroups}, Springer-Verlag, Berlin 1984.
   \bibitem{B-J-J-S18} P. Budzy\'{n}ski, Z. J. Jab{\l}o\'{n}ski,
I. B. Jung, J. Stochel, {\em Unbounded weighted
composition operators in $L^2$-spaces}, Lect.
Notes Math., Volume 2209, Springer 2018.
   \bibitem{C-J-J-S21} S. Chavan, Z. J. Jab{\l}o\'{n}ski, I. B.
Jung, J. Stochel, Taylor spectrum approach to
Brownian-type operators with quasinormal entry,
{\em Ann. Mat. Pur. Appl.} {\bf 200} (2021),
881-922.
   \bibitem{Cha-Sh} S. Chavan, V. M. Sholapurkar, Completely
monotone functions of finite order and Agler's
conditions, {\em Studia Math.} {\bf 226} (2015),
229-258.
   \bibitem{Con} J. B. Conway, {\em The theory of
subnormal operators}, Mathematical Surveys and
Monographs, {\bf 36}, American Mathematical Society,
Providence, RI, 1991.
   \bibitem{Cur90} R. E. Curto, Quadratically hyponormal
weighted shifts, {\em Integr. Equ. Oper. Theory}
{\bf 13} (1990), 49-66.
    \bibitem{g-w70} R. Gellar,  L. J. Wallen,
Subnormal weighted shifts and the Halmos-Bram criterion, {\em Proc.
Japan Acad.} {\bf 46} (1970), 375-378.
    \bibitem{Hal50} P. Halmos, Normal dilations and extensions of
operators, {\em Summa Bras. Math.} {\bf 2}
(1950), 124-134.
    \bibitem{hal70} P. R. Halmos, Ten problems in Hilbert
space, {\em Bull. Amer. Math. Soc.} {\bf 76} (1970), 887-933.
   \bibitem{Ja-Ju-St11}
Z. J. Jab{\l}o\'{n}ski, I. B. Jung, J. A. Kwak,
J. Stochel, Hyperexpansive completion problem
via alternating sequences; an application to
subnormality, {\em Linear Algebra Appl.} {\bf
434} (2011), 2497-2526.
   \bibitem{J-J-S06} Z. J. Jab{\l}o\'{n}ski, I. B. Jung,
J. Stochel, Backward extensions of
hyperexpansive operators, {\em Studia Math.}
{\bf 173} (2006), 233-257.
   \bibitem{Ja-Ju-St20} Z. J. Jab{\l}o\'{n}ski, I. B.
Jung, J. Stochel, Conditional positive
definiteness in operator theory, submitted 2021.
   \bibitem{Ja-St} Z. Jab{\l}o\'{n}ski, J. Stochel,
Unbounded 2-hyperexpansive operators, {\em Proc. Edin.
Math. Soc.} {\bf 44} (2001), 613-629.
   \bibitem{lam} A. Lambert, Subnormality and weighted
shifts, {\em J. London Math. Soc.} {\bf 14} (1976),
476-480.
   \bibitem{shi74} A. L. Shields, Weighted shift operators
and analytic function theory, {\em Topics in operator theory}, pp.
49-128. Math. Surveys, No. 13, Amer. Math. Soc., Providence, R.I.,
1974.
   \bibitem{Sta66} J. Stampfli, Which weighted shifts are
subnormal?, {\em Pacific J. Math.} {\bf 17}
(1966), 367-379.
   \bibitem{Sto91} J. Stochel, Characterizations of subnormal
operators, {\em Studia Math.} {\bf 97} (1991),
227-238.
   \bibitem{St-Sz89} J. Stochel, F. H. Szafraniec, On
normal extensions of unbounded operators. II,
{\it Acta. Sci. Math. $($Szeged\/$)$} {\bf 53}
(1989), 153-177.
   \end{thebibliography}
   
   \end{document}